\newtheorem{theorem}{Theorem}[section]
\newtheorem{lemma}[theorem]{Lemma}
\newtheorem{remark}[theorem]{Remark}
\newcommand{\R}{\mathbb{R}}
\newcommand{\be}{\beta}
\newcommand{\va}{\varepsilon}
\newcommand{\al}{\alpha}
\newcommand{\pa}{\partial}
\newcommand{\lm}{\lambda}
\newcommand{\De}{\Delta}
\newcommand{\na}{\nabla}
\newcommand{\per}{\text{per}}
\date{} 
\numberwithin{equation}{section}
\begin{document}

\title{Combined Effects of Homogenization and Singular Perturbations: Quantitative Estimates}

\author{Weisheng Niu\thanks{Supported by the NSF of China (11971031, 11701002).}\qquad 
     Zhongwei Shen\thanks{Supported in part by NSF grant DMS-1856235.}}

\maketitle
\pagestyle{plain}
\begin{abstract}
We investigate quantitative estimates in periodic homogenization of 
second-order elliptic systems of elasticity with singular fourth-order perturbations.
 The convergence rates, which depend on the scale $\kappa$ that represents the strength of the singular perturbation 
 and on the length scale $\va$ of the heterogeneities, are established. 
 We also obtain the large-scale Lipschitz estimate, down to the  scale $\va$  and independent of $\kappa$.
 This large-scale estimate, when combined with small-scale estimates, yields  the  classical Lipschitz estimate 
 that is uniform in both $\va$ and $\kappa$.
 
 \medskip
 
\noindent \textbf{Keywords}: Homogenization; Singular Perturbation; Convergence Rate; Uniform Lipschitz  Estimate.

\noindent\textbf {AMS Subject Classification 2020}: 35B27; 35B25.

\end{abstract}


\section{Introduction}\label{section-1}

In this paper we aim  to quantify the combined effects of homogenization and singular perturbations for the elliptic system,
\begin{align}\label{eq1}
\mathcal{L}_\varepsilon ( u_\varepsilon ) =F   \quad\text{ in } \Omega,
\end{align}
where $\Omega \subset  \mathbb{R}^d$ $  (d\geq 2)$ is  a bounded domain and 
\begin{align}
\label{lva}
\mathcal{L}_\varepsilon=\kappa^2\Delta^{2}-\textrm{div}(A(x/\varepsilon)\nabla),\quad\, 0<\varepsilon, \kappa  <1.
\end{align}
 The coefficient matrix (tensor)  $A(y)=( a^{\al\be}_{ij}(y))$, with $ 1\le  \alpha, \beta, i, j\le d$, is assumed 
  to be real, bounded measurable and to satisfy the elasticity condition,
\begin{equation}\label{econ}
\aligned
& a_{ij}^{\alpha\beta} (y) =a_{ji}^{\beta \alpha} (y) = a_{\alpha j}^{i \beta} (y),\\
 & \nu_1 |\xi|^2 \le a_{ij}^{\alpha\beta} \xi_i^\alpha \xi_j^\beta \le \nu_2 |\xi|^2
 \endaligned
\end{equation}
for a.e.~$y\in \mathbb{R}^d$ and for any symmetric matrix $\xi=(\xi_i^\alpha)\in \mathbb{R}^{d\times d}$,
where $\nu_1, \nu_2$ are positive constants.
We also assume  that $A$ is 1-periodic; i.e.,
\begin{align}\label{pcon}
A(y+z)=A(y) ~\textrm{ for any } z\in \mathbb{Z}^{d} \textrm{ and a.e. } y\in \mathbb{R}^{d}.
\end{align}

The elliptic operator in (\ref{lva}) arises in the study of the formation of the so-called shear bands in elastic materials subject to
severe loadings \cite{francfort1994}.
Variational functionals associated with the related  nonlinear operators are also used to model the heterogeneous thin films of martensitic materials \cite{shu2000,foseca2007}.
    Homogenization of the elliptic system \eqref{eq1} was first studied  by Bensoussan, Lions, and Papanicolaou in \cite{lions1978}, 
   where qualitative results were  obtained for the case $\kappa=\va$. Later on, in
\cite{francfort1994} Francfort and  M\"{u}ller provided a systematic qualitative analysis in periodic homogenization of \eqref{eq1} 
and the related nonlinear functionals for the case $\kappa=\va^\gamma$, where $ 0<\gamma<\infty$. See also \cite{zeppieri2016stochastic} for the related work in the stochastic setting. 
Assume that $A$ satisfies conditions \eqref{econ} - \eqref{pcon} and $\kappa=\va^\gamma$.
Let $u_\va\in H^2_0(\Omega; \R^d)$ be the weak solution of \eqref{eq1} with $F\in H^{-1}(\Omega; \R^d)$.
Thanks to \cite{lions1978,francfort1994}, as  $\va\rightarrow 0$, $u_\va$ converges weakly in $H^1(\Omega; \R^d) $
 to the weak solution $u_0$ in $H^1_0(\Omega)$  of the second-order elliptic system,
 \begin{align}   \label{heq0}
  -\text{div} (\widehat{A} \na u_0 )=F ~\text{ in } \Omega,
 \end{align}
 with constant coefficients.
The effective  coefficient  matrix $\widehat{A}$ in (\ref{heq0})  depends on $\kappa$, which represents the strength of the singular perturbation,
 in three cases: $0<\gamma<1$;  $\gamma=1$;  and $\gamma>1$.
In the case $\gamma>1$, the matrix $\widehat{A}$ agrees with the effective  matrix for the second-order 
elliptic operator $-\text{\rm div} (A(x/\va)\nabla )$, without singular perturbation.
If  $0< \gamma< 1$, the matrix $\widehat{A}$ is simply given by the average of $A$ over its periodic cell.
In the  most interesting case $\gamma=1$, the expression  for the matrix $\widehat{A}$  depends on  a corrector, which solves a cell problem for a fourth-order elliptic 
system. The same is true for a  general $\kappa=\kappa (\va)$ under the assumption that 
\begin{align}\label{ratio}
  \kappa\rightarrow 0  \,\text{ as }\, \va\rightarrow 0, \quad\text{ and } \quad\lim_{\va\rightarrow 0} \frac{\kappa}{\va}=\rho.
\end{align}
The  effective  matrix  $\widehat{A}$ in \eqref{heq0} depends on $\rho $ in three cases: 
$\rho =0$;  $ 0<\rho <\infty$;  and $\rho =\infty.$
See Section \ref{q-homo} for the details.

Our primary interest in this paper is in the quantitative  homogenization of the elliptic system (\ref{eq1}).
The qualitative results described above show that the singular perturbation and the homogenization have combined effects 
in determining the effective equation  for \eqref{eq1}. 
So a natural question is  to understand the combined effects in a quantitative way.
More precisely, we shall be interested in the sharp convergence rate of $u_\va$ to $u_0$ in terms of $\va$ and $\kappa$, as well as regularity estimates of $u_\va$,
which  are uniform in $\va$ and $\kappa$.
Although much work has been done on the quantitative homogenization for the second-order elliptic system $-\text{\rm div}(A(x/\va)\nabla u_\va)=F$
 in recent years,  
to the  best of our  knowledge, the question  has not been previously addressed, with the exception of \cite{niuyuan2019}, 
where an $O(\va)$ rate in $L^2(\Omega)$ was obtained in the case $\kappa=\va$ for Dirichlet problems with homogeneous 
boundary conditions.

Our first main result  provides a convergence rate in $L^2(\Omega)$ for a general $\kappa$ satisfying (\ref{ratio}).

\begin{theorem}\label{coth1}
Let $\Omega$ be a bounded $C^{1,1}$ domain in $\R^d, d\geq2$, 
and $A$ satisfy \eqref{econ}-\eqref{pcon}. 
 Suppose \eqref{ratio} holds and if $\rho =0$, we also assume that $A$ is Lipschitz continuous, i.e.,
\begin{align}\label{hcon}
|A(x)-A(y)|\leq L |x-y| ,  \text{ for any } x,y\in \R^d.
\end{align}
For $F\in L^2(\Omega; \R^d)$ and $G\in H^2(\Omega; \R^d)$, let
 $u_{\varepsilon}\in H^2(\Omega; \R^d) $ be  a weak solution  of \eqref{eq1} 
with $u_\va -G \in H_0^2(\Omega; \R^d)$,  and  $u_0\in H^1(\Omega; \R^d)$ the weak solution of its homogenized problem
\eqref{heq0}  with $u_0 -G \in H_0^1(\Omega; \R^d)$.
Then \begin{equation}\label{re1}
 \|u_{\varepsilon}-u_0\|_{L^{2}(\Omega)}\leq 
 \big\{ \|F\|_{L^2(\Omega)}  + \|G\|_{H^2(\Omega)} \big\} 
 \left\{
 \aligned
 &  C_1  \left\{ \kappa + \va +\left( \frac{\va}{\kappa} \right)^2\right\}     &\text{ if }  \rho =\infty,\\
 & C_2 \left\{ \kappa +\va + \rho^{-2} \big|  \big( \frac{\kappa}{\va} \big)^2  -\rho^2  \big| \right\}  &\text{ if }  0< \rho < \infty,\\
 & C_3 \left\{ \kappa +\va + \left( \frac{\kappa}{\va} \right)^2 \right\}    &\text{ if }  \rho=0,
 \endaligned
 \right.
\end{equation}
where $C_1, C_2$ depend only on $d$, $\nu_1$, $\nu_2$ and  $\Omega$,
and  $C_3$ depends only on $d$,  $\nu_1$, $\nu_2$,  $\Omega$, and $L$.
\end{theorem}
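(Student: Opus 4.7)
The plan is to perform a two-scale expansion adapted to $\mc{L}_\va$ and then upgrade the resulting energy bound to the $L^2$ norm by a duality argument. First I would introduce the first-order corrector $\chi^\be = \chi^\be_{\ka/\va}$, periodic on the unit cell and solving the appropriate cell problem for $\mc{L}_\va$ (which retains the ratio $\ka/\va$). The associated effective matrix $\wh{A}_{\ka/\va}$ converges to $\wh{A}_\rho$ as $\ka/\va\to\rho$, and a direct computation in the cell problem provides the quantitative distance $\wh{A}_{\ka/\va}-\wh{A}_\rho$ that furnishes the third term on each branch of \eqref{re1}.

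With the corrector in hand I would form the smoothed approximation
\[
w_\va = u_0 + \va\, \chi^\be(\cdot/\va)\, S_\va(\eta_\va\, \pa_\be u_0) + b_\va,
\]
where $S_\va$ is a Steklov-type smoothing, $\eta_\va$ a cutoff supported at distance $\gtrsim \va$ from $\pa\Om$, and $b_\va$ a narrow boundary-layer corrector of thickness $O(\ka)$ whose role is to absorb the mismatch between the two Dirichlet data of $u_\va$ ($u_\va = G$ and $\pa_n u_\va = \pa_n G$ on $\pa\Om$) and the single Dirichlet datum of $u_0$. Applying $\mc{L}_\va$ to $u_\va - w_\va$ would then produce three families of residuals: (i) a standard homogenization commutator of order $\va$, handled by introducing a flux corrector (antisymmetric potential); (ii) fourth-order residuals from $\ka^2\De^2$ hitting the corrector term, which scale like $(\ka/\va)^2$ in the relevant dual norms; and (iii) a trace-type contribution from $b_\va$ of size $O(\ka)$. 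An energy estimate in the norm $\ka\|\De\cdot\|_{L^2}+\|\na\cdot\|_{L^2}$ then yields an intermediate $H^1$-type rate. To pass to $L^2$, I would pick $\Phi\in L^2(\Om;\R^d)$, solve the adjoint problem $\mc{L}_\va^* v_\va = \Phi$ with $v_\va\in H^2_0$, build the analogous adjoint approximation $\tilde{w}_\va$, and rewrite $(u_\va-u_0,\Phi)$ as a sum of bilinear pairings between primal and dual residuals, each controllable by Cauchy--Schwarz against the preceding bounds.

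The case analysis then reduces to quantifying $\wh{A}_{\ka/\va}-\wh{A}_\rho$: for $0<\rho<\infty$ a Lipschitz-in-parameter estimate for the cell problem yields $\rho^{-2}|(\ka/\va)^2-\rho^2|$; for $\rho=\infty$ the limit matrix is the average of $A$ and the correction scales as $(\va/\ka)^2$; for $\rho=0$ one would expand around the standard second-order corrector, and this is precisely where the hypothesis \eqref{hcon} enters, to secure the uniform $C^{1,\al}$-regularity of the unperturbed corrector needed to extract the $(\ka/\va)^2$ remainder. The main obstacle I anticipate is the boundary-layer construction: producing a $b_\va$ that kills the normal-derivative mismatch on $\pa\Om$ while contributing only $O(\ka)$ in $L^2$ requires careful rescaled estimates on a one-dimensional biharmonic model in the normal direction, and it is here that the $C^{1,1}$-regularity of $\pa\Om$ is used in an essential way. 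A secondary difficulty is to keep every constant uniform in $\ka/\va$ across the three regimes, which in the $\rho=0$ branch depends crucially on \eqref{hcon}.
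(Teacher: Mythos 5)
Your plan is essentially sound and shares the core architecture with the paper: use the $\lambda$-dependent corrector $\chi^\lambda$ with $\lambda=\kappa/\va$, form a smoothed two-scale expansion with a cutoff and the Steklov smoothing $S_\va$, absorb the commutator via a flux (antisymmetric) corrector, get a suboptimal $H^1$ rate, and upgrade to $O(\va+\kappa)$ in $L^2$ by duality; then quantify $|\widehat{A^\lambda}-\widehat{A}|$ in the three regimes to produce the third term in \eqref{re1}.

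The differences are organizational rather than substantive. First, the paper does not build a boundary-layer corrector $b_\va$ by solving a rescaled one-dimensional biharmonic model; it simply adds the term $(u_{0,\lm}-G)(1-\eta_t)$ with $t=(1+\lm)\va$ to the expansion, so that $w_{\va,\lm}\in H^2_0(\Omega)$ automatically and the error created by the cutoff is controlled via Lemma \ref{lsmooth4}. Your boundary-layer construction would work (it is classical singular-perturbation machinery), but the ``main obstacle'' you anticipate is not actually present in the cutoff route, and the cutoff is what cleanly produces the width $t\sim\va+\kappa$ that yields $O(\va+\kappa)$ after duality. Second, the paper cleanly factors through an intermediate solution $u_{0,\lm}$ of $-\text{div}(\widehat{A^\lm}\nabla u_{0,\lm})=F$ (Theorem \ref{thm-lm}) and then bounds $\|u_{0,\lm}-u_0\|$ by an energy estimate comparing the two constant-coefficient equations; you instead leave the discrepancy $\widehat{A^\lm}-\widehat{A}$ as a residual in the expansion, which is equivalent but slightly less modular. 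Third, the role of the Lipschitz hypothesis when $\rho=0$ is not ``$C^{1,\alpha}$ regularity of the unperturbed corrector'': in the paper it enters through $H^2$ elliptic regularity for the cell problem to obtain $\|\chi^\lm-\chi^0\|_{L^2(Y)}\le C\lm^2$ (Theorem \ref{s-pp-theorem}), combined with an integration by parts $\bigl|\fint_Y A(\nabla\chi^\lm-\nabla\chi^0)\bigr|\le C\|\nabla A\|_\infty\|\chi^\lm-\chi^0\|_{L^2(Y)}$ in the formula for $\widehat{A^\lm}-\widehat{A^0}$. Your mechanism is different but plausibly would serve the same purpose.
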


The $O(\va)$ convergence rate in $L^2(\Omega)$  has been established  for second-order elliptic
systems with highly oscillating coefficients  in various contexts.
Following a general  approach developed  in 
 \cite{suslinaD2013, shenzhu2017} (see \cite{shennote2018} for references on the related work),
 one first establishes an $O(\va^{1/2})$ rate in $H^1(\Omega)$ for a two-scale expansion of $u_\va$, and
 then uses a duality argument to improve the rate to $O(\va)$ in $L^2(\Omega)$.
 To carry this out, we introduce  an operator,
\begin{align}\label{lvalm}
\mathcal{L}_\va^\lm=\lm^2 \va^2 \Delta^2 -\text{div} (A(x/\va)  \na   ),
\end{align}
where $0<\lm<\infty$ is fixed. Let   $\mathcal{L}_0^\lm = -\text{\rm div} \big( \widehat{A^\lm}\nabla )$
denote the effective operator   for  $\mathcal{L}_\va^\lm $ in (\ref{lvalm}).
In Section \ref{section-4}  we  will show that  if $\mathcal{L}_\va^\lm (u_{\va, \lm } )=F$ and $u_{\va, \lm}-G \in H^2_0(\Omega; \R^d)$, then 
\begin{equation}\label{conv-01}
\| u_{\va, \lm}  - u_{0, \lm} \|_{L^2(\Omega)}
\le C ( 1+\lm) \va  \big\{ \| F\|_{L^2(\Omega)} + \| G\|_{H^2(\Omega)} \big\},
\end{equation}
where $u_{0, \lm}$ is the weak solution of $\mathcal{L}_0^\lm (u_{0, \lm})=F$ in $\Omega$
with $u_{0, \lm}-G \in H^1_0(\Omega; \R^d)$.
To complete the proof of Theorem \ref{coth1}, we observe that 
\begin{equation}\label{l-lm}
\mathcal{L}_\va^\lm =\mathcal{L}_\va \ \  \text{ and }\ \  u_{\va, \lm} =u_\va \quad \text{ if }  \lm=\kappa \va^{-1},
\end{equation}
and use energy estimates to bound $\|u_{0, \lm} -u_0\|_{L^2(\Omega)}$.

We note that the convergence rate in (\ref{re1}) involves three terms.
The first term $\kappa$ is caused by the singular perturbation, the second term $\va$ by homogenization,
while the third term is generated  by  $|\widehat{A^\lm}- \widehat{A}|$.
One may find examples in the one-dimensional case, which show that both the perturbation error $O(\kappa)$ and
the homogenization error $O(\va)$ are sharp. 
Our estimates of $|\widehat{A^\lm}- \widehat{A}|$ in Section \ref{q-homo} should also be sharp as $\lm \to 0$ or $\infty$.
As a result, we believe the convergence rates obtained   in Theorem \ref{coth1} are sharp.
On the other hand, in view of (\ref{conv-01}),
 it is  interesting to point out that  for any $\va>0$ and $ \kappa>0$,  the solution $u_\va$ may be approximated with an $O(\kappa +\va)$ error in 
  $L^2(\Omega)$ by  the solution of a second-order elasticity system with constant coefficients satisfying (\ref{econ}).
  However, the coefficients depend on  $\lm=\kappa\va^{-1}$.
  
  Our second main result gives the large-scale Lipschitz estimate down to the microscopic scale $\va$.
  
  \begin{theorem}\label{lipth}
Assume that $A$ satisfies \eqref{econ} and \eqref{pcon}.
Let $u_\varepsilon\in H^2(B_R; \R^d  )$ be a  weak solution of 
 $\mathcal{L}_\varepsilon(  u_\varepsilon) =F$ in $B_R$, where  $B_R=B(x_0, R)$, $R>\va$,  and
 $F\in L^p(B_R; \R^d  )$ for some $p>d$. Then for $\va \leq r< R$, 
\begin{align}\label{L-L-0}
 \left(\fint_{B_r} |\na u_\varepsilon |^2\right )^{1/2}\leq C  \left\{ \left (\fint_{B_R}
| \na u_\varepsilon|^2\right)^{1/2} +
R\left (\fint_{B_R} |F|^p\right)^{1/p}\right\},
\end{align}
where $C$ depends only on $d$, $\nu_1$, $\nu_2$, and $p$.
\end{theorem}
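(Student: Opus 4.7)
I would prove Theorem \ref{lipth} by the Avellaneda--Lin iteration scheme, using the convergence rate \eqref{conv-01} as the quantitative input and the uniform ellipticity of the homogenized tensor $\widehat{A^\lm}$ as the regularity input. After translating to set $x_0=0$ and applying the natural rescaling $y=x/R$, $v(y)=R^{-1}u_\va(Ry)$ (which transforms the equation into the same form on $B_1$ with $\va$, $\kappa$ replaced by $\va/R$, $\kappa/R$ and the scale-invariant ratio $\lm:=\kappa/\va$ preserved), it suffices to show, uniformly in $\va\in(0,1]$ and $\kappa>0$, that
\[
 \left(\fint_{B_r}|\na u|^2\right)^{1/2}\le C\left\{\left(\fint_{B_1}|\na u|^2\right)^{1/2}+\|F\|_{L^p(B_1)}\right\}\quad\text{for all }r\in[\va,1],
\]
whenever $\mathcal{L}_\va u=F$ on $B_1$.

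\textbf{One-step excess decay.} The core of the argument is a one-step improvement: there exist universal constants $\te\in(0,1/4)$ and $\va_0\in(0,1)$ (depending only on $d,\nu_1,\nu_2,p$) such that for all $\va\le\va_0$, all $\kappa>0$, and all weak solutions $u\in H^2(B_1;\R^d)$ of $\mathcal{L}_\va u=F$,
\[
 \inf_{\ell\text{ affine}}\frac{1}{\te}\left(\fint_{B_\te}|u-\ell|^2\right)^{1/2}\le \frac{1}{2}\inf_{\ell\text{ affine}}\left(\fint_{B_1}|u-\ell|^2\right)^{1/2}+C\|F\|_{L^p(B_1)}.
\]
To prove this, set $\lm=\kappa/\va$ so that $\mathcal{L}_\va=\mathcal{L}_\va^\lm$, and compare $u$ on $B_{1/2}$ with the solution $w$ of the constant-coefficient second-order system $\mathcal{L}_0^\lm w=F$ matching the trace of $u$ on $\partial B_{1/2}$. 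A suitable interior version of \eqref{conv-01}, uniform in $\lm\in(0,\infty)$, should give $\|u-w\|_{L^2(B_{1/2})}\le C\va\{(\fint_{B_1}|\na u|^2)^{1/2}+\|F\|_{L^p(B_1)}\}$. Since $\widehat{A^\lm}$ inherits the elasticity conditions \eqref{econ} with constants uniform in $\lm\in[0,\infty]$ (cf.\ Section \ref{q-homo}), $w$ solves a uniformly elliptic second-order system with constant coefficients and therefore admits interior Schauder estimates: for $\te$ small there is an affine $P$ with $\te^{-1}(\fint_{B_\te}|w-P|^2)^{1/2}\le C\te^\sigma\{(\fint_{B_{1/2}}|\na w|^2)^{1/2}+\|F\|_{L^p}\}$ for some $\sigma>0$. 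An energy estimate controls $(\fint_{B_{1/2}}|\na w|^2)^{1/2}$ in terms of the data, and the triangle inequality combined with the approximation yields the one-step decay after fixing $\te$ small, then $\va_0(\te)$ small.

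\textbf{Iteration and main obstacle.} The excess-decay step is iterated at the dyadic scales $\te^j$: each rescaling preserves $\lm$ and raises the effective $\va$ to $\va/\te^j$, so the step applies as long as $\va/\te^j\le\va_0$. Summing the resulting geometric series bounds the Campanato quantity $r^{-1}\inf_{\ell}(\fint_{B_r}|u-\ell|^2)^{1/2}$ uniformly for $r\in[\va,1]$; the finitely many residual scales with $\va/\te^j>\va_0$ are absorbed by a direct Caccioppoli estimate, itself uniform in $\va$ and $\kappa$ once the mixed cross-terms coming from testing the $\kappa^2\Delta^2$ part against a cutoff are carefully absorbed. A Campanato-type argument comparing successive minimizers $\ell_{r}$ then converts the excess bound into the $L^2$ gradient bound \eqref{L-L-0}. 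The principal technical difficulty is proving the interior convergence rate with constants \emph{uniform in $\lm$}: a direct use of \eqref{conv-01} with $G$ taken as the trace of $u$ on $\partial B_{1/2}$ would introduce $\|u\|_{H^2(B_{1/2})}$, which the Caccioppoli inequality for the fourth-order part controls only by $\kappa^{-1}\|\na u\|_{L^2(B_1)}$, thereby cancelling the $\kappa$ hidden in the $(1+\lm)\va$ rate and yielding no improvement. The fix is to run the two-scale expansion $u\approx u_0+\va\chi^\lm(x/\va)\cdot\na u_0+\cdots$ locally inside $B_{1/2}$, multiplying only the corrector term by a smooth cutoff so that the resulting remainder is controlled by the local $L^2$-norm of $\na u$ together with uniform-in-$\lm$ bounds on the correctors $\chi^\lm$ coming from the cell problem for $\mathcal{L}_\va^\lm$.
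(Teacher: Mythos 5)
Your proposal correctly identifies the overall scheme (excess-decay iteration of Avellaneda--Lin/Fischer--Otto type, uniform-in-$\lm$ ellipticity of $\widehat{A^\lm}$ from Lemma \ref{ela-lemma}, absorption of the residual scales near $\va$ by Caccioppoli, Campanato argument to pass from excess decay to the gradient bound), and you even spot the central obstacle: a direct use of \eqref{conv-01} is useless because the $(1+\lm)\va = \va + \kappa$ rate carries the singular-perturbation error $O(\kappa)$, which does not decay under rescaling and so kills the iteration.

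However, the fix you propose does not close the gap. Your plan compares $u$ on $B_{1/2}$ with the solution $w$ of the \emph{second-order} constant-coefficient system $\mathcal{L}_0^\lm w = F$, and you then hope a cutoff applied to the corrector term will produce a uniform-in-$\lm$ rate of the form $C\va\{\cdots\}$ with no $(1+\lm)$ factor. This cannot work: writing out $\mathcal{L}_\va^\lm$ applied to the two-scale expansion $u - u_0 - \va\eta\chi^\lm(x/\va)\nabla u_0$, the term $-\lm^2\va^2\Delta^2 u_0$ is present because $u_0$ does not solve any fourth-order equation, and in the energy estimate this term contributes a remainder of size $\lm\va\|\Delta u_0\| = \kappa\|\Delta u_0\|$ that is scale-invariant under the dyadic rescaling (the ratio $\lm = \kappa/\va$ is preserved). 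The cutoff only localizes; it does not remove the fourth-order perturbation error. When $\lm$ is large (equivalently, when $\kappa$ is not small relative to $\va$), your comparison solution is simply not a good approximation to $u$ at the relevant scales, no matter how the corrector is inserted.

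The missing idea — and in fact the main new device in the paper's argument — is to replace the second-order effective equation with the \emph{intermediate fourth-order constant-coefficient equation} $\lm^2\va^2\Delta^2 v_{\va,\lm} - \text{\rm div}(\widehat{A^\lm}\nabla v_{\va,\lm}) = F$, i.e.\ equation \eqref{I-E}/\eqref{eqv0}, which retains the singular-perturbation term with exactly the same coefficient as $\mathcal{L}_\va^\lm$. With this choice the term $\lm^2\va^2\Delta^2(\cdot)$ cancels rather than becoming an error, and the residual equation for $w_{\va,\lm}= u_{\va,\lm}-v_{\va,\lm}-\va\eta_\va\chi^\lm(x/\va)S_\va(\nabla v_{\va,\lm})$ in Lemma \ref{le1} produces a rate $O((\va/r)^\sigma)$ that is uniform in $\lm$; this is Theorem \ref{apth2}. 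Furthermore, since $\widehat{A^\lm}$ is constant and satisfies the elasticity condition uniformly in $\lm$, the intermediate operator $\lm^2\va^2\Delta^2 - \text{\rm div}(\widehat{A^\lm}\nabla)$ enjoys $C^{1,\alpha}$ estimates uniform in $\lm$ (Theorem \ref{loc-thm}), which is what powers the one-step excess decay in Theorem \ref{c1al}. One further difference, minor by comparison: in the iteration the paper measures the excess against the family $\mathcal{H}_{1,\va}^\lm$ of correctored linear profiles rather than against affine functions; measuring against affine profiles in the $u$ (Campanato) form as you do would leave a residual of size $O(\va)$ at each scale coming from the corrector, which must then be tracked and absorbed — not impossible, but it adds friction the paper avoids.
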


 Under the additional smoothness  condition that $A$ is H\"older continuous:
 \begin{equation} \label{H-cond}
 |A(x)-A(y)| \le M |x-y|^\sigma  \quad  \text{ for any } x, y\in \R^d,
 \end{equation}
 we obtain the classical Lipschitz estimate, which is uniform in both $\va$ and $\kappa$,  for $\mathcal{L}_\va (u_\va)=F$.
 
 \begin{theorem}\label{main-thm-3}
 Assume that $A$ satisfies conditions \eqref{econ}, \eqref{pcon}, and \eqref{H-cond} for some $\sigma \in (0 ,1)$.
 Let $u_\va \in H^2(B_r; \R^d)$ be a weak solution of $\mathcal{L}_\va (u_\va)=F$ in $B_r =B(x_0, r)$,
 where $F\in L^p(B_r; \R^d)$ for some $p>d$. Then 
 \begin{equation}\label{T-lip}
 |\nabla u_\va (x_0)|\le C \left\{ \left(\fint_{B_r} |\nabla u_\va|^2 \right)^{1/2}
 + r \left(\fint_{B_r} |F|^p \right)^{1/p} \right\},
 \end{equation}
 where $C$ depends only on $d$, $\nu_1$, $\nu_2$, $p$, and $(M, \sigma)$.
 \end{theorem}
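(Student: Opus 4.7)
The strategy is to combine the large-scale Lipschitz estimate of Theorem \ref{lipth}, which is valid down to the microscopic scale $\va$, with a pointwise small-scale Lipschitz estimate for scales $\leq \va$, where the H\"older hypothesis (\ref{H-cond}) supplies the classical Schauder input. Assume $x_0=0$. For a scale $s \leq \min(r,\va)$, rescale $v(y) := u_\va(sy)$ on $y \in B_1$; a direct computation gives
\begin{equation*}
 \mu^2 \Delta^2 v -\text{div}(B(y)\nabla v) = g(y) \quad \text{in } B_1,
\end{equation*}
with $\mu = \kappa/s$, $B(y) = A((s/\va)\, y + x_0/\va)$, and $g(y) = s^2 F(sy)$. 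Because $s \leq \va$, the coefficient $B$ still satisfies (\ref{econ}) and inherits a H\"older seminorm $\leq M(s/\va)^\sigma \leq M$, independently of $\va$ and $s$. The small-scale problem is thereby reduced to a mixed fourth/second-order equation on the fixed ball $B_1$ with uniformly H\"older coefficients and a single singular-perturbation parameter $\mu \in (0,\infty)$.

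The core technical input is to establish, for such rescaled problems, the pointwise Lipschitz bound
\begin{equation*}
 |\nabla v(0)| \leq C \Bigl\{ \Bigl(\fint_{B_1}|\nabla v|^2\Bigr)^{1/2} + \Bigl(\fint_{B_1}|g|^p\Bigr)^{1/p}\Bigr\}
\end{equation*}
with $C$ depending only on $d,\nu_1,\nu_2,p,M,\sigma$ and, crucially, \emph{independent of $\mu$}. I would obtain this by a Campanato-type iteration: on each dyadic ball $B_\rho$, compare $v$ to a solution $w$ of the frozen-coefficient equation $\mu^2 \Delta^2 w - \text{div}(B(0)\nabla w)=0$, use a Liouville-type statement for this constant-coefficient operator that is uniform in $\mu$ to produce an affine approximation with excess decay $\rho^{1+\alpha}$, and then pick up an $O(\rho^\sigma)$ perturbation error from the H\"older modulus of $B$. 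The iteration closes and yields a uniform $C^{1,\alpha}$ estimate at the origin.

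With the small-scale estimate in hand, the assembly is standard. If $r \leq \va$, apply the small-scale estimate directly with $s = r$ and unscale. If $r > \va$, first invoke Theorem \ref{lipth} on the pair $B_\va \subset B_r$ to bound $(\fint_{B_\va}|\nabla u_\va|^2)^{1/2}$ by the right-hand side of (\ref{T-lip}); then apply the small-scale estimate with $s = \va$ to pass from this $L^2$ average on $B_\va$ to $|\nabla u_\va(0)|$, absorbing the residual $\va$-scale forcing term into $r(\fint_{B_r}|F|^p)^{1/p}$ by a straightforward volume comparison (using $\va \leq r$ and $p>d$).

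The main obstacle is the uniform-in-$\mu$ regularity for the frozen-coefficient operator $L_\mu := \mu^2\Delta^2 - \text{div}(B(0)\nabla)$ underlying the Campanato iteration. This operator interpolates between a purely second-order elliptic system at $\mu = 0$ and a biharmonic-dominated one as $\mu \to \infty$, and one must show that the excess-decay estimate for $L_\mu$-solutions survives in both limits with a rate and constant depending only on the ellipticity of $B(0)$, not on $\mu$. A clean way to see this is via a compactness/contradiction scheme: if the excess decay fails along some sequence $\mu_n$, rescale the offending solutions, extract a subsequential limit, and check that the limit solves either the pure second-order or the pure biharmonic system (depending on whether $\mu_n \to 0$ or $\mu_n \to \infty$, with an intermediate fixed-$\mu$ subcase), each of which enjoys Liouville and thereby contradicts the assumed failure.
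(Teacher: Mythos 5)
Your strategy matches the paper's: rescale at the microscopic scale $\va$ to transfer the question to a unit ball with coefficient $A(\cdot)$ (which is uniformly $\sigma$-H\"older, independent of $\va$) and singular-perturbation parameter $\mu=\kappa/\va$; establish a $C^{1,\alpha}$ estimate for the fixed-scale operator $\mu^2\Delta^2-\text{\rm div}(A\nabla)$ with constant \emph{independent of} $\mu$; and then stitch this to the large-scale Lipschitz bound of Theorem \ref{lipth} via the $B_\va\subset B_r$ pair. The one substantive difference is in how you propose to obtain the uniform-in-$\mu$ small-scale estimate. The paper already proved this as Theorem \ref{loc-thm} (via Lemma \ref{lemma-r-1}) by a direct Campanato-type perturbation from the frozen-coefficient operator, where the constant-coefficient regularity input \eqref{C-1a-1} is obtained by observing that all derivatives of solutions of $\mu^2\Delta^2 w-\text{\rm div}(A_0\nabla w)=0$ are again solutions, so iterating the $\mu$-uniform Caccioppoli inequality \eqref{Ca-2} gives $\|u\|_{H^k(B_1)}\le C_k\|u\|_{L^2(B_{3/2})}$ and hence pointwise bounds by Sobolev embedding --- completely elementary and uniform in $\mu$. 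You instead propose a compactness/contradiction argument along sequences $\mu_n$, invoking Liouville theorems for the second-order and biharmonic limiting operators. That alternative is workable (and the $\mu_n\to\infty$ subcase in fact degenerates benignly: the uniform energy estimate forces $\|\nabla^2 w_n\|_{L^2}\to 0$ so the blowup limit is affine and the excess decay is automatic), but it is more indirect than needed and requires care in tracking the topology of convergence across the two degenerate limits. Both approaches buy the same conclusion; the paper's direct iteration is the lighter route. Your observation that rescaling by $s\le\va$ further shrinks the H\"older seminorm of the coefficient is correct but unnecessary once $s=\va$ already gives a seminorm bounded by $M$.
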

 
 Under the conditions (\ref{econ}), (\ref{pcon}) and (\ref{H-cond}),
 the interior Lipschitz estimate (\ref{T-lip}) as well as the boundary Lipschitz estimate with the Dirichlet condition was   proved by Avellaneda and Lin 
 in a seminal work \cite{al87}, using  a compactness method.
 The boundary Lipschitz estimate with Neumann conditions was established in \cite{klsa1}. 
Related work in the stochastic setting may be found in \cite{Gloria2015, armstrongan2016, armstrongar2016, fisher2016, gloriajems2017}.

To prove Theorem \ref{lipth}, we use an approach found in \cite{fisher2016}.
As in \cite{al87}, the idea is to utilize  correctors to establish a large-scale $C^{1, \alpha}$ estimate for  $0<\alpha<1$, from which the large-scale
Lipschitz estimate (\ref{L-L-0}) follows.
Unlike the compactness method used in \cite{al87,klsa1},
the approach requires a (suboptimal) convergence rate in $H^1(\Omega)$ for a two-scale expansion of $u_\va$.
In order to reach down to the microscopic scale $\va$, which is necessary for obtaining  the classical  Lipschitz estimate
in Theorem \ref{main-thm-3}, we introduce an intermediate equation,
\begin{equation}\label{I-E}
\lm^ 2 \va^2 \Delta^2 v_{\va, \lm}  -\text{\rm div} (\widehat{A^\lm}\nabla v_{\va, \lm} ) =F,
\end{equation}
with $\lm>0$ fixed, where $\widehat{A^\lm}$ is the effective matrix for $\mathcal{L}_\va^\lm$ in  (\ref{lvalm}).
The key observation is to use the solution of (\ref{I-E}), instead of the homogenized equation (\ref{heq0}),
 in the two-scale expansion of $u_\va$.
The purpose is two-fold. Firstly, with the added higher-order term in the equation \eqref{I-E},
one eliminates the  error caused by the singular perturbation.
As a result, we are able  to establish a convergence rate in $H^1(\Omega)$, uniformly in $\lm$.
Secondly,  since $\widehat{A^\lm}$ is constant, one may prove the $C^{1, \alpha}$ estimate, uniformly in $\lm$,
for (\ref{I-E}) by classical methods. We remark that as in \cite{fisher2016}, the same approach may be used to establish the large-scale 
$C^{k, \alpha}$ estimates down to the scale $\va$ for any $k\ge 2$.

The paper is organized as follows.
In Section \ref{section-2} we collect some regularity estimates, which are uniform in $\lm$, for the operator (\ref{L-lm}) without the periodicity 
assumption. The materials in this section are more or less known. 
In Section \ref{q-homo} we present the qualitative homogenization for the operator (\ref{lva}) under the assumption (\ref{ratio}).
The proof of Theorem \ref{coth1} is given in Section \ref{section-4}.
In Section \ref{section-app} we establish an approximation result in $H^1(\Omega)$ for $u_{\va, \lm}$ by solutions of (\ref{I-E}),
while the result is used in Section \ref{section-5} to  prove the large-scale $C^{1, \alpha}$ estimate.
Finally, the proofs of Theorems \ref{lipth} and \ref{main-thm-3} are given in Section \ref{section-6}.

The summation convention is used throughout. We  also use $\fint_E u$ to denote the $L^1$ average of $u$ over the set $E$.


\section{Preliminaries}\label{section-2}

Consider the operator,
\begin{equation}\label{L-lm}
\mathcal{L}^\lm =\mathcal{L}_1^\lm
=\lm^2 \Delta^2 -\text{\rm div} (A(x) \nabla ), 
\end{equation}
with $0< \lm<\infty$ fixed  and $A=A(x)$ satisfying the elasticity condition (\ref{econ}).
The periodicity condition (\ref{pcon}) is not used in this section with the exception of  Lemma \ref{s-pp-lemma} and Theorem \ref{s-pp-theorem}.
Let $\Omega$ be a bounded Lipschitz domain in $\mathbb{R}^d$.
For $F\in H^{-1}(\Omega; \R^d)$ and $G\in H^2(\Omega; \R^d)$, there exists a unique $u\in H^2(\Omega; \R^d)$ such that
$\mathcal{L}^\lm (u)=F$ in $\Omega$ and $u -G\in H^2_0(\Omega; \R^d)$.
Moreover, the solution $u$ satisfies the energy estimate,
\begin{equation}\label{energy-0}
\lm \|\nabla^2 u\|_{L^2(\Omega)}
+ \| \nabla u \|_{L^2(\Omega)}
\le C \big\{ \| F\|_{H^{-1}(\Omega)}
+ \| \nabla G\|_{L^2(\Omega)} +\lm \|\nabla^2 G\|_{L^2(\Omega)}  \big\},
\end{equation}
where $C$ depends only on $d$, $\nu_1$, $\nu_2$, and $\Omega$.
To see this, one considers $v=u -G$ and applies the Lax-Milgram Theorem  to the bilinear form,
\begin{equation}\label{bi}
a(\phi, \psi)
= \lm^2 \int_\Omega \nabla^2 \phi \cdot \nabla^2 \psi\, dx
+ \int_\Omega A(x) \nabla \phi \cdot \nabla \psi \, dx,
\end{equation}
on the Hilbert space $H^2_0(\Omega; \R^d)$.
The first Korn inequality is needed for proving (\ref{energy-0}).

\subsection{Caccioppoli's inequalities}

\begin{theorem}\label{Ca-thm}
Let $u\in H^2(B_{2r}; \R^d) $ be a weak solution of
$\mathcal{L}^\lm (u)= F +\text{\rm div} (f)$ in $B_{2r}=B(x_0, 2r)$,
where $F\in L^2(B_{2r}; \R^d)$ and $ f \in L^2(B_{2r}; \R^{d\times d} )$.
Then
\begin{align}
\lambda^2 \int_{B_r}
|\nabla^2  u|^2\, dx 
& \le  \frac{C}{r^2} \left( \frac{\lambda^2}{r^2} +1 \right) \int_{B_{2r}} |u|^2\, dx + C  \int_{B_{2r}} |F| |u| \, dx
+ C \int_{B_{2r}} |f|^2\, dx , \label{Ca-1}\\
\int_{B_r} |\nabla u|^2\, dx
& \le \frac{C}{r^2} \int_{B_{2r}} |u|^2\, dx
+ C   \int_{B_{2r}} |F| |u| \, dx
+ C \int_{B_{2r}} |f|^2\, dx \label{Ca-2},
\end{align}
where $C$ depends only on $d$, $\nu_1$ and $\nu_2$.
\end{theorem}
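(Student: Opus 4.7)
The plan is a standard Caccioppoli argument adapted to the fourth-order operator. Fix a cutoff $\eta \in C_c^\infty(B_{2r})$ with $\eta \equiv 1$ on $B_r$, $0 \le \eta \le 1$, and $|\na^j \eta| \le C r^{-j}$ for $j = 1, 2$. I test the weak formulation of $\mc{L}^\lm(u) = F + \operatorname{div}(f)$ against $\phi = \eta^4 u$; the power $\eta^4$ is needed so that every cross term produced by Leibniz-expanding $\na^2 \phi$ carries a nonnegative power of $\eta$, permitting absorption by Young's inequality without singularity on the support of $\na \eta$.

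Expanding $\lm^2 \int \na^2 u : \na^2(\eta^4 u)$ yields the leading term $\lm^2 \int \eta^4 |\na^2 u|^2$, two cross terms involving $u \cdot \na^2 u$ (with coefficients $\eta^3 \na^2 \eta$ and $\eta^2 \na\eta \otimes \na\eta$), and the mixed cross term $8\lm^2 \int \eta^3 (\pa_i\eta)(\pa_j u^\al)(\pa_i\pa_j u^\al)$. The first two are bounded by Young's inequality, absorbing $\na^2 u$ into the leading term at the cost of a contribution bounded by $C\lm^2 r^{-4} \int |u|^2$. The mixed cross term is the crucial one: the identity $(\pa_j u^\al)(\pa_i\pa_j u^\al) = \tfrac12 \pa_i |\na u|^2$ followed by integration by parts converts it into $-4\lm^2 \int \operatorname{div}(\eta^3 \na\eta) |\na u|^2$, a zeroth-order expression in $u$ bounded in absolute value by $C \lm^2 r^{-2} \int_{B_{2r} \setminus B_r} |\na u|^2$, supported on the annulus where $\na \eta$ does not vanish.

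For the second-order contribution, the elasticity condition \eqref{econ} and its full symmetries yield $\int \eta^4 A \na u \cdot \na u \ge \nu_1 \int \eta^4 |\epsilon(u)|^2$, where $\epsilon(u) = \tfrac12 (\na u + (\na u)^T)$ is the symmetric gradient. Korn's first inequality applied to $\eta^2 u \in H_0^1(B_{2r}; \R^d)$ then upgrades this to $\int \eta^4 |\na u|^2 \le C \int \eta^4 |\epsilon(u)|^2 + C r^{-2} \int_{B_{2r}} |u|^2$, after which the cross term $4 \int \eta^3 A \na u \cdot (\na \eta \otimes u)$ is handled by Young. Collecting everything and absorbing the $\epsilon$-small Young contributions into the left-hand side yields \eqref{Ca-1} directly: the coefficient $\tfrac{C}{r^2}(\tfrac{\lm^2}{r^2} + 1)$ of $\int |u|^2$ is assembled from the $\lm^2 r^{-4}$ term (fourth-order Young) and the $r^{-2}$ term (Korn, second-order).

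The main obstacle is upgrading to the $\lm$-independent estimate \eqref{Ca-2}, which requires eliminating both the annular $C \lm^2 r^{-2} \int |\na u|^2$ term and the $\lm^2 r^{-4} \int |u|^2$ term from the right-hand side. I would absorb the annular term via a hole-filling iteration: apply the above computation with cutoffs adapted to intermediate radii $r \le \rho < \tau \le 2r$ and use $\int_{B_\tau \setminus B_\rho} = \int_{B_\tau} - \int_{B_\rho}$ to rewrite the estimate as $f(\rho) \le \theta f(\tau) + T(\tau - \rho)$ with $\theta < 1$, iterating along a geometric sequence of radii via Widman's lemma. To remove the residual $\lm^2 r^{-4}$ factor, the $\lm^2$-weighted $|\na^2 u|^2$ control on the left-hand side can be combined with a Gagliardo--Nirenberg interpolation of the form $\|\na u\|_{L^2}^2 \le C \|u\|_{L^2}(\|\na^2 u\|_{L^2} + r^{-2}\|u\|_{L^2})$, exchanging one factor of $|\na^2 u|^2$ for $r^{-2} \|u\|_{L^2}$. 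The delicate bookkeeping required to carry out both steps in a $\lm$-uniform way is, I expect, the technical heart of the proof.
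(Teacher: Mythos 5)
Your overall structure — test against $\eta^4 u$, expand the fourth-order term, use Korn for the second-order term, and then handle the leftover gradient cross term — is the right framework, and your bookkeeping of the cross terms from $\nabla^2(\eta^4 u)$ (including the integration by parts that converts $8\lm^2\int \eta^3 \partial_i\eta\,\partial_j u\,\partial_i\partial_j u$ into $-4\lm^2\int\mathrm{div}(\eta^3\nabla\eta)|\nabla u|^2$) is correct. However, there is a genuine gap in where you locate the difficulty and in the mechanism you propose to resolve it.

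First, \eqref{Ca-1} does \emph{not} follow ``directly'' from your expansion. After collecting terms you are left with
\begin{equation*}
\lm^2\int_{B_r}|\nabla^2 u|^2 + \int_{B_r}|\nabla u|^2
\le \frac{C\lm^2}{r^2}\int_{B_{2r}\setminus B_r}|\nabla u|^2
+\frac{C}{r^2}\Big(\frac{\lm^2}{r^2}+1\Big)\int_{B_{2r}}|u|^2
+ C\int_{B_{2r}}\big(|F||u|+|f|^2\big),
\end{equation*}
and the annular term $\frac{C\lm^2}{r^2}\int_{B_{2r}\setminus B_r}|\nabla u|^2$ is absent from the right side of \eqref{Ca-1}. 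Eliminating it is the actual content of the theorem and is needed for \emph{both} assertions, not just \eqref{Ca-2}.

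Second, the hole-filling you invoke cannot be run as stated. If you move to intermediate radii $1<\rho<\tau<2$ (after normalizing $r=1$), the coefficient of $\int_{B_\tau}|\nabla u|^2$ you obtain after adding $C\lm^2(\tau-\rho)^{-2}\int_{B_\rho}|\nabla u|^2$ to both sides is
$\theta = \frac{C\lm^2(\tau-\rho)^{-2}}{1+C\lm^2(\tau-\rho)^{-2}}$,
which tends to $1$ as $\tau-\rho\to 0$ and is not uniformly bounded away from $1$ in $\lm$. The standard iteration (Widman/Giaquinta) lemma requires a \emph{fixed} $\theta<1$; so the iteration as you describe it does not close. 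What makes it close in the paper's proof is that the interpolation inequality
\begin{equation*}
\int_{B_\tau}|\nabla(u\varphi)|^2
\le C\Big(\int_{B_\tau}|u\varphi|^2\Big)^{1/2}\Big(\int_{B_\tau}|\varphi\Delta u|^2\Big)^{1/2}
+C\int_{B_\tau}\big(|u|^2|\nabla\varphi|^2+|u|^2|\varphi||\Delta\varphi|\big)
\end{equation*}
is applied \emph{to the troublesome gradient term first}. After Young this trades $\lm^2(\tau-\rho)^{-2}\int|\nabla(u\varphi)|^2$ for $\frac{\lm^2}{2}\int_{B_\tau}|\nabla^2 u|^2 + C\lm^2(\tau-\rho)^{-4}\int|u|^2$: the quantity to be absorbed now carries the \emph{same} $\lm^2$-weight as the left-hand side with coefficient $\frac12<1$, and the penalty is only an $|u|^2$ term with a $(\tau-\rho)^{-4}$ singularity. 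That is exactly why the paper iterates on $t_j=2-\tau^j$ with $\tau$ chosen so that $2\tau^4>1$: the $\frac12$ factors compound to $2^{-j}$ while the $(\tau^j-\tau^{j+1})^{-4}$ factors grow like $\tau^{-4j}$, and the product is geometrically summable. Your Gagliardo--Nirenberg suggestion at the end is morally this same inequality, but in your plan it enters after the fact ``to remove a $\lm^2 r^{-4}$ factor'' (which is not the issue — \eqref{Ca-1} keeps that factor) rather than before the iteration, where it is essential. Finally, the case $\lm>1$ of \eqref{Ca-2} is handled in the paper by a second, separate application of the same interpolation inequality (to pass from $|\nabla u|^2$ to $|u|\,|\Delta u|$) combined with the already-proven $\lm^2\int|\nabla^2 u|^2\lesssim(1+\lm^2)\int|u|^2+\dots$, so that the $\lm^{-2}$ from $|\Delta u|^2=\lm^{-2}\cdot\lm^2|\Delta u|^2$ cancels the $1+\lm^2$; this $\lm$-cancellation mechanism is not visible in your sketch.

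So: same ingredients, but the interpolation step must be applied to the gradient cross term \emph{inside} the iteration, otherwise the iteration parameter $\theta$ is not uniformly below $1$ and the scheme does not converge; and \eqref{Ca-1} is not a free by-product of the initial test-function computation.
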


\begin{proof}

By translation and dilation we may assume that $x_0=0$ and $r=1$.
For $1<s<t< 2$, let $\varphi$ be a cut-off  function in $C_0^\infty(B(0, t))$ such that $0\le \varphi \le 1$, 
$\varphi =1$ on $B_s$ and $|\nabla^k \varphi| \le C (t-s)^{-k}$ for $k=1, \dots , 4$.
By taking the test function $ u \varphi^4$ in the weak formulation of the  equation
$\mathcal{L}^\lm (u)= F +\text{\rm div}(f)$ and using the Cauchy inequality, we deduce that
\begin{equation}\label{Ca-3}
\aligned
& \lm^2 \int_{B_s} |\nabla^2 u|^2\, dx
+\int_{B_s} |\nabla u|^2\, dx\\
&\le
C \int_{B_2}  \big(   |F| |u| +| f|^2  \big)  \, dx
+C \lm^2 (t-s)^{-2} \int_{B_t} |\nabla (u\varphi) |^2\, dx \\
& \qquad
+C\big( (t-s)^{-2} + 
 \lm^2 (t-s)^{-4} \big) \int_{B_2} |u|^2\, dx.
\endaligned
\end{equation}
To eliminate the term involving $|\nabla (u\varphi) |$ in the right-hand side of (\ref{Ca-3}), 
we use an iteration technique found in \cite{barton2016}, where  an improved Caccioppoli inequality for a general higher-order
elliptic system was proved.
We point out that Theorem \ref{Ca-thm} does not follow directly from \cite{barton2016}, since we require the constant $C$ to be  independent of the 
parameter $\lm$.

Using the identity, 
$$
u \varphi \cdot \Delta (u\varphi)
=(u\Delta u) \varphi^2
+ 2 u \nabla (u\varphi) \nabla  \varphi
- 2 |u|^2 |\nabla \varphi|^2
+ |u|^2 \varphi \Delta \varphi,
$$
and integration by parts as well as the Cauchy inequality, we may show that
\begin{equation}\label{Inter}
\aligned
\int_{B_t} |\nabla (u\varphi)|^2\, dx
 & \le  C \left(\int_{B_t} |  u\varphi   |^2\, dx \right)^{1/2}
\left(\int_{B_t} |\varphi \Delta u|^2 \, dx \right)^{1/2}\\
& \qquad
+ C \int_{B_t}
| u|^2 |\nabla \varphi |^2 \ dx 
 +C \int_{B_t}
 |u|^2 |\varphi| |\Delta \varphi|\, dx,
 \endaligned
 \end{equation}
 where $C$ depends only on $d$.
This, together with (\ref{Ca-3}), gives
\begin{equation}\label{Ca-4}
\aligned
& \lm^2 \int_{B_s} |\nabla^2 u|^2\, dx
+\int_{B_s} |\nabla u|^2\, dx\\
&
\le
C \int_{B_2}  \big(   |F| |u|  +| f|^2  \big)  \, dx
+ \frac{\lm^2}{2} 
\int_{B_t} |\nabla^2 u|^2\, dx  \\
& \qquad
+C\big( (t-s)^{-2} 
+ \lm^2 (t-s)^{-4} \big) \int_{B_2} |u|^2\, dx.
\endaligned
\end{equation}
For $j\ge 1$, let 
$
t_j = 2-\tau^j  ,
$
where $\tau \in (0, 1)$ is to be determined.
It follows from (\ref{Ca-4}) that
\begin{equation}\label{Ca-5}
\aligned
& \lm^2 \int_{B_{t_j} } |\nabla^2 u|^2\, dx
+\int_{B_{t_j} } |\nabla u|^2\, dx\\
&
\le
C \int_{B_2}  \big(   |F| |u|  +| f|^2  \big)  \, dx
+ \frac{\lm^2}{2} 
\int_{B_{t_{j+1} }} |\nabla^2 u|^2\, dx  \\
& \qquad
+C\big( (\tau^j -\tau^{j+1} )^{-2} 
+ \lm^2 (\tau^j -\tau^{j+1} )^{-4} \big) \int_{B_2} |u|^2\, dx.
\endaligned
\end{equation}
By iteration this leads to
\begin{equation}\label{Ca-6}
\aligned
& \lm^2 \int_{B_{t_1} } |\nabla^2 u|^2\, dx
+\int_{B_{t_1} } |\nabla u|^2\, dx\\
&
\le
C\sum_{i=1}^j \frac{1}{2^{i-1}}
 \int_{B_2}  \big(   |F| |u|  +| f|^2  \big)  \, dx
+ \frac{\lm^2}{2^j} 
\int_{B_{t_{j+1} }} |\nabla^2 u|^2\, dx  \\
& \qquad
+C\sum_{i=1}^j
\frac{1}{2^{i-1}}
\big( (\tau^i -\tau^{i+1} )^{-2} 
+ \lm^2 (\tau^i -\tau^{i+1} )^{-4} \big) \int_{B_2} |u|^2\, dx
\endaligned
\end{equation}
for $j\ge 1$.
We now choose $\tau\in (0, 1)$ so that $2\tau^4>1$.
By letting $j \to \infty$ in (\ref{Ca-6}) we obtain (\ref{Ca-1}) with $r=1$, and
\begin{equation}\label{Ca-7}
\int_{B_1} |\nabla u|^2\,dx
\le C (\lm^2 +1) \int_{B_2} |u|^2\, dx
+ C \int_{B_2}
\big( |F| |u|  + |f|^2\big)\, dx,
\end{equation}
which gives (\ref{Ca-2}) if $\lm\le 1$.
Finally, if $\lm>1$, we note that (\ref{Ca-6}) yields
\begin{equation}\label{Ca-8}
\lm^2 \int_{B_{t_1}}
|\nabla^2 u|^2\, dx
\le C \int_{B_2} 
\big( |F| |u|  +|f|^2  \big)\, dx
+ C (1+\lm^2) \int_{B_2} |u|^2\, dx.
\end{equation}
By (\ref{Inter}) we have
\begin{equation}\label{Ca-9}
\aligned
\int_{B_1}
|\nabla u|^2\,  dx 
&\le C \int_{B_{t_1}} |u|^2\, dx + C \int_{B_{t_1}} |\Delta  u|^2\, dx\\
&\le C \int_{B_2}
\big( |F| |u| +|f|^2 + |u|^2 \big)\, dx,
\endaligned
\end{equation}
where we have used (\ref{Ca-8}) for the last inequality.
\end{proof}

\begin{remark}\label{remark-higher}

{\rm 

Let $u$ be a solution of $\mathcal{L}^\lm (u)  =F +\text{\rm div}(f)$ in $B_{2r}$.
Let $w=\lambda^2 \Delta u$.
Since
$$
\Delta w = F +\text{\rm div} (f) +\text{\rm div} (A\nabla u),
$$
it follows from the Caccioppoli inequality for $\Delta$  that
\begin{equation}\label{high-e}
\aligned
\int_{B_r}
\lm^4  |\nabla \Delta u |^2\, dx
 & \le \frac{C\lm^4}{r^2}
\int_{B_{3r/2}} |\Delta u|^2\, dx
+ C r^2 \int_{B_{3r/2}}|F|^2\, dx\\
&\qquad \qquad
+C \int_{B_{3r/2}} |f|^2\, dx
+ C \int_{B_{3r/2}} |\nabla u|^2\, dx\\
&\le
\frac{C}{r^2}
\left( \frac{\lm}{r} + 1\right)^4 \int_{B_{2r}} |u|^2\, dx
+C \left(\frac{\lm}{r} + 1 \right)^2 \int_{B_{2r}} |f|^2\, dx\\
& \qquad\qquad
+ C r^2\int_{B_{2r}} |F|^2\, dx,
\endaligned
\end{equation}
where we have used (\ref{Ca-1}) and (\ref{Ca-2}) for the last inequality.
}
\end{remark}

\subsection{Reverse H\"older  inequalities}

\begin{theorem}\label{RH-thm}
Let $u\in H^2(B_{2r}; \R^d) $ be a weak solution of
$\mathcal{L}^\lm (u)=F +\text{\rm div}(f)$ in $B_{2r} =B(x_0, 2r)$,
where $F\in L^2(B_{2r}; \R^d)$ and  $ f\in L^2(B_{2r}; \R^{d\times d} )$.
Then there exists some $p>2$, depending only on $d$, $\nu_1$ and $\nu_2$, such that
\begin{equation}\label{RH-1}
\left(\fint_{B_r}
|\nabla u|^p \right)^{1/p}
\le C \left\{
\left(\fint_{B_{2r}} |\nabla u|^2 \right)^{1/2}
+ \left(\fint_{B_{2r}} |f|^p\right)^{1/p}
+C r \left(\fint_{B_{2r}}
|F|^2 \right)^{1/2} \right\},
\end{equation}
where $C$ depends only on $d$, $\nu_1$ and $\nu_2$.
\end{theorem}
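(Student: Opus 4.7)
The plan is to establish a weak reverse Hölder inequality by hand and then invoke a standard Gehring/Giaquinta--Modica self-improvement lemma. The key observation is that the operator $\mathcal{L}^\lm$ is invariant under adding a constant vector to $u$, so one may freely replace $u$ by $u-c$ in the Caccioppoli estimate (\ref{Ca-2}). By translation and dilation, I would reduce to $x_0=0$, $r=1$ without loss of generality.

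First I would pick $c=\fint_{B_{2r}} u$ and apply (\ref{Ca-2}) to $u-c$, which solves the same equation, obtaining
\[
\fint_{B_r}|\nabla u|^2 \leq \frac{C}{r^2}\fint_{B_{2r}}|u-c|^2 + C\fint_{B_{2r}}|F||u-c|\, +\, C\fint_{B_{2r}}|f|^2.
\]
Next, I would handle the $F$-term by Young's inequality in the form $|F||u-c|\le \tfrac{1}{2}r^{-2}|u-c|^2 + Cr^2|F|^2$; the first piece is absorbed into the leading RHS term. Now the Sobolev--Poincaré inequality with exponent $q=\tfrac{2d}{d+2}<2$ (or any fixed $q\in[1,2)$ in low dimensions) gives
\[
\left(\fint_{B_{2r}}|u-c|^2\right)^{1/2} \le C\, r \left(\fint_{B_{2r}}|\nabla u|^{q}\right)^{1/q},
\]
which, together with the previous display, yields the weak reverse Hölder inequality
\[
\left(\fint_{B_r}|\nabla u|^2\right)^{1/2} \le C\left(\fint_{B_{2r}}|\nabla u|^{q}\right)^{1/q} + C\left(\fint_{B_{2r}}|f|^2\right)^{1/2} + Cr\left(\fint_{B_{2r}}|F|^2\right)^{1/2}.
\]

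With this weak reverse Hölder in hand on every ball $B(y,\rho)$ with $B(y,2\rho)\subset B_{2r}$, the conclusion follows from a standard self-improvement lemma (Gehring, in the Giaquinta--Modica formulation for systems): there is $p=p(d,\nu_1,\nu_2)>2$ such that (\ref{RH-1}) holds, with the right-hand side data carried through verbatim since the inequality is linear in $|f|$ and $|F|$. The main obstacle I would anticipate is ensuring uniformity in $\lm$; however, the Caccioppoli estimate (\ref{Ca-2}) already has a constant independent of $\lm$ (which is precisely the non-trivial point proved earlier via the iteration of \cite{barton2016}), and both Sobolev--Poincaré and Gehring's lemma are purely Euclidean tools that do not see $\lm$ at all. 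Hence the whole argument is automatically $\lm$-uniform, and the only non-routine ingredient is the $\lm$-uniform Caccioppoli inequality already established in Theorem \ref{Ca-thm}.
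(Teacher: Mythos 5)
Your argument is correct and follows essentially the same route as the paper's proof: both apply the $\lm$-uniform Caccioppoli inequality (\ref{Ca-2}) to $u$ minus its mean, invoke a Sobolev--Poincar\'e inequality to produce a weak reverse H\"older inequality with a sub-critical exponent $q<2$ on arbitrary interior balls, and conclude by the Gehring/Giaquinta--Modica self-improvement lemma. The only cosmetic difference is the handling of the lower-order term $\int |F||u-c|$, where the paper uses H\"older with a conjugate exponent $q_2'<2$ while you use Young's inequality with $r^{\pm2}$ weights; both yield the same weak reverse H\"older estimate.
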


\begin{proof}
This follows from (\ref{Ca-2}) by the  self-improvement property of the (weak) reverse H\"older 
inequalities.  Let $B^\prime =B(z, t)$ be a ball such that $2B^\prime \subset B(x_0, 2r)$.
Choose $1<q_1<2<q_2<\infty$ such that
$$
\left(\fint_{2B^\prime}
|u-E|^{q_2}\right)^{1/q_2}
\le C t  \left(\fint_{2B^\prime}
|\nabla u|^{q_1} \right)^{1/q_1},
$$
where $E$ is the $L^1$ average of $u$ over $2B^\prime$.
Since $\mathcal{L}^\lambda (u-E)=\mathcal{L}^\lm (u)$, it follows from (\ref{Ca-2}) that
\begin{equation}\label{RH-2}
\left(\fint_{B^\prime}
|\nabla u|^2\right)^{1/2}
\le C \left(\fint_{2B^\prime}
|\nabla u|^{q_1} \right)^{1/q_1}
+ Ct \left(\fint_{2B^\prime}
|F|^{q_2^\prime} \right)^{1/q_2^\prime}
+C \left(\fint_{2B^\prime}
|f|^2 \right)^{1/2},
\end{equation} 
where $C$ depends only on $d$, $\nu_1$ and $\nu_2$.
The fact that (\ref{RH-2}) holds for any ball $2B^\prime\subset B$ implies (\ref{RH-1})  \cite{Gia-1983}.
\end{proof}

\begin{remark}\label{b-Ca}

{\rm

Let $\Omega$ be a bounded Lipschitz domain.
Fix $x_0\in \partial\Omega$ and define 
$$
D_r =B(x_0, r) \cap \Omega \quad \text{ and } \quad \Delta_r= B(x_0, r) \cap \partial\Omega,
$$
where $0<r< r_0= c_0 \, \text{\rm diam}(\Omega)$.
Let $u\in H^2(D_{2r}; \R^d)$ be a weak solution of
$\mathcal{L}^\lm (u)= F +\text{\rm div} (f)$ in $D_{2r}$ with 
$u=0$ and $\nabla u=0$ on $\Delta_{2r}$. Then 
\begin{align}
\lambda^2 \int_{D_r}
|\nabla^2  u|^2\, dx 
& \le  \frac{C}{r^2} \left( \frac{\lambda^2}{r^2} +1 \right) \int_{D_{2r}} |u|^2\, dx + C  \int_{D_{2r}} |F| |u| \, dx
+ C \int_{D_{2r}} |f|^2\, dx , \label{Ca-20}\\
\int_{D_r} |\nabla u|^2\, dx
& \le \frac{C}{r^2} \int_{D_{2r}} |u|^2\, dx
+ C   \int_{D_{2r}} |F| |u| \, dx
+ C \int_{D_{2r}} |f|^2\, dx \label{Ca-21},
\end{align}
where $C$ depends only on $d$, $\nu_1$ and $\nu_2$.
Note that since $u=0$ and $\nabla u=0$ on $\Delta_{2r}$, 
we have $u\varphi \in H_0^2(D_{2r}; \R^d)$ for any $\varphi\in  C_0^2(  B_{2r}) $.
The proof of (\ref{Ca-20}) and (\ref{Ca-21})  is exactly the same as that of Theorem \ref{Ca-thm}.
As a consequence, we also obtain the boundary reverse H\"older inequality, 
\begin{equation}\label{RH-b}
\left(\fint_{D_r}
|\nabla u|^p \right)^{1/p}
\le C \left\{
\left(\fint_{D_{2r}} |\nabla u|^2 \right)^{1/2}
+ \left(\fint_{D_{2r}} |f|^p\right)^{1/p}
+C r \left(\fint_{D_{2r}}
|F|^2 \right)^{1/2} \right\},
\end{equation}
where $C>0$ and $p>2$ depend only on $d$, $\nu_1$, $\nu_2$ and the Lipschitz constant of $ B(z, r_0)\cap \partial \Omega $.
}
\end{remark}

\begin{theorem}\label{Meyers-thm}
Suppose $A$ satisfies \eqref{econ} and $\Omega$ is a bounded Lipschitz domain.
Let $u\in H^2_0(\Omega; \R^d)$ be a weak solution of $\mathcal{L}^\lm (u)=\text{\rm div}(f)$ in 
$\Omega$. Then there exists $p>2$, depending only on $d$, $\nu_1$, $\nu_2$ and $\Omega$, 
such that
\begin{equation}\label{Meyers-e}
\|\nabla u\|_{L^p(\Omega)}
\le C \| f\|_{L^p(\Omega)},
\end{equation}
where $C$ depends only on $d$, $\nu_1$, $\nu_2$, and $\Omega$.
\end{theorem}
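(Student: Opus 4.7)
The plan is to deduce \eqref{Meyers-e} from the self-improving property (Gehring's lemma) of reverse Hölder inequalities, applied globally on $\Omega$. The key point is that both the interior inequality in Theorem \ref{RH-thm} and its boundary counterpart in Remark \ref{b-Ca} carry constants independent of $\lambda$, so the resulting $L^p$ estimate inherits this property.

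First I would invoke the baseline energy estimate \eqref{energy-0} with $F=\text{\rm div}(f)$ and $G=0$, which gives $\|F\|_{H^{-1}(\Omega)}\le \|f\|_{L^2(\Omega)}$ and therefore the initial bound $\|\nabla u\|_{L^2(\Omega)}\le C\|f\|_{L^2(\Omega)}$. This provides the base integrability needed to start the iteration.

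Next, extend $u$ by zero outside $\Omega$ to obtain $\tilde u\in H^2(\R^d;\R^d)$ (valid because $u\in H_0^2(\Omega;\R^d)$). For every ball $B=B(x_0,r)$ with $x_0\in\overline\Omega$ and $0<r<r_0=c_0\,\text{\rm diam}(\Omega)$, I would establish a weak reverse Hölder inequality of the form
\begin{equation*}
\left(\fint_B|\nabla\tilde u|^2\right)^{1/2}
\le C\left(\fint_{2B}|\nabla\tilde u|^{q_1}\right)^{1/q_1}
+C\left(\fint_{2B}|f\chi_\Omega|^2\right)^{1/2},
\end{equation*}
for some fixed $q_1\in(1,2)$, with $C$ independent of $\lambda$. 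If $4B\subset\Omega$, this is precisely \eqref{RH-2} with $F=0$. If $4B\cap\partial\Omega\ne\emptyset$, I would select $z\in\partial\Omega$ with $|z-x_0|\le 4r$ so that $B\subset D_{Cr}(z)\subset 2B$, and then apply the boundary Caccioppoli inequality \eqref{Ca-21} to $u$ on $D_{Cr}(z)$; the resulting $L^2$ norm of $u$ on $D_{2Cr}(z)$ is converted into an $L^{q_1}$ norm of $\nabla\tilde u$ on $2B$ via Sobolev-Poincaré, which applies because $\tilde u$ vanishes on $2B\setminus\Omega$, a set of measure comparable to $|B|$ by the Lipschitz regularity of $\Omega$.

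With the above inequality in hand for all balls of radius less than $r_0$, Gehring's self-improving lemma produces an exponent $p>2$ such that the analogous estimate holds with $|\nabla\tilde u|^p$ on the left and $|\nabla\tilde u|^2$ on the right, the constants depending only on $d,\nu_1,\nu_2$ and $\Omega$. A finite covering of $\overline\Omega$ by such balls, combined with the $L^2$ energy bound from the first step, then yields $\|\nabla u\|_{L^p(\Omega)}\le C\|\nabla u\|_{L^2(\Omega)}+C\|f\|_{L^p(\Omega)}\le C\|f\|_{L^p(\Omega)}$. The main obstacle is the boundary step: one must upgrade the exponent on the right side of \eqref{Ca-21} from $2$ to some $q_1<2$ in order to set up Gehring's lemma, which is where the Lipschitz character of $\partial\Omega$ enters the constant through the Sobolev-Poincaré inequality for functions vanishing on a definite portion of the ball.
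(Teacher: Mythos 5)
Your proposal is correct and follows essentially the same route as the paper: the paper's own proof deduces \eqref{Meyers-e} from the interior reverse H\"older inequality \eqref{RH-1} and its boundary counterpart \eqref{RH-b} (stated in Remark \ref{b-Ca}) via a covering argument plus the energy estimate, which is exactly the Gehring self-improvement argument you spell out, with the extension of $u$ by zero and the Sobolev--Poincar\'e inequality for functions vanishing on a definite portion of the ball playing the role of the boundary Caccioppoli-to-reverse-H\"older upgrade that the paper carries out in Remark \ref{b-Ca}.
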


\begin{proof}
The Meyers estimate (\ref{Meyers-e}) was proved in \cite{francfort1994} by an interpolation argument.
It also follows readily from the reverse H\"older estimates (\ref{RH-1}) and (\ref{RH-b}).
Indeed, by using (\ref{RH-1}), (\ref{RH-b}) and a simple covering argument, we see that for some $p>2$,
$$
\|\nabla u\|_{L^p(\Omega)}
\le C \| f\|_{L^p(\Omega)} + C \|\nabla u\|_{L^2(\Omega)}
\le C \| f\|_{L^p(\Omega)},
$$
where we have used the energy estimate and H\"older's  inequality for the last step.
\end{proof}

\subsection{$C^{1, \alpha}$ estimates}

\begin{lemma}\label{lemma-r-1}
Suppose $A$ satisfies conditions \eqref{econ}  and \eqref{H-cond}.
Let $u\in H^2(B_2; \R^d)$ be a weak solution of $\mathcal{L}^\lm (u)=0$ in $B_2=B(0, 2)$.
Then
\begin{equation}\label{C-1-a}
\| u\|_{C^{1, \alpha} (B_1)}
\le C_\alpha  \left(\fint_{B_2}  |u|^2\right)^{1/2},
\end{equation}
where $0<\alpha< \sigma$ and $C_\alpha $ depends only on $d$, $\nu_1$, $\nu_2$, $\alpha$, and $(M, \sigma)$.
\end{lemma}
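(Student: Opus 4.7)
The plan is to carry out a Campanato-type iteration based on freezing the coefficient $A(x)$, in the spirit of classical Schauder theory, with a key uniform-in-$\lm$ decay estimate for the constant-coefficient version of $\mathcal{L}^\lm$ serving as the engine.

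\textbf{Step 1 (Constant-coefficient decay).} First I would establish: for each constant matrix $A_0$ satisfying \eqref{econ} and each $\al\in(0,1)$, there exist $\te=\te_\al\in(0,1/2)$ and $C>0$, both independent of $\lm>0$ and of $A_0$ (subject to \eqref{econ}), such that every weak solution $v$ of $\mathcal{L}_0^\lm(v):=\lm^2\Delta^2 v-\text{\rm div}(A_0\nabla v)=0$ on $B_1$ satisfies
\[
\inf_{P\in\mathcal{P}_1}\!\left(\fint_{B_\te}|v-P|^2\right)^{\!\!1/2}\le\te^{1+\al}\inf_{P\in\mathcal{P}_1}\!\left(\fint_{B_1}|v-P|^2\right)^{\!\!1/2},
\]
where $\mathcal{P}_1$ denotes affine $\R^d$-valued maps. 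The proof would be by contradiction and compactness: if it failed, there would exist sequences $\lm_n>0$, constant matrices $A_0^{(n)}$ satisfying \eqref{econ}, and normalized solutions $v_n$ violating the estimate. The Caccioppoli bounds of Theorem \ref{Ca-thm} show $v_n$ is bounded in $H^1_{\text{loc}}(B_1)$ (and in $H^2_{\text{loc}}$ wherever $\lm_n$ stays bounded below), so after extracting a subsequence one may assume $A_0^{(n)}\to A_\infty$, $\lm_n\to\lm_\infty\in[0,\infty]$, and $v_n\to v_\infty$ strongly in $L^2_{\text{loc}}$ by Rellich. Passing to the limit in the equation yields $-\text{\rm div}(A_\infty\nabla v_\infty)=0$ if $\lm_\infty=0$; $\mathcal{L}_0^{\lm_\infty}(v_\infty)=0$ if $0<\lm_\infty<\infty$; and $\Delta^2 v_\infty=0$ (biharmonic) if $\lm_\infty=\infty$, the last being obtained by dividing the equation by $\lm_n^2$ and using the uniform $L^2$ bound on $\nabla v_n$ to send the second-order term to zero in $H^{-1}$. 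In all three cases $v_\infty$ solves a constant-coefficient elliptic system and is $C^\infty$, so the required decay holds trivially for $v_\infty$, contradicting the presumed failure after choosing $\te$ small.

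\textbf{Step 2 (Freezing and iteration).} For $x_0\in B_1$ and $0<r\le 1/2$ I would set $A_0=A(x_0)$ and let $v\in u+H^2_0(B_r(x_0);\R^d)$ solve $\mathcal{L}_0^\lm(v)=0$ on $B_r(x_0)$. Then $w:=u-v\in H^2_0(B_r(x_0);\R^d)$ satisfies $\mathcal{L}_0^\lm(w)=\text{\rm div}\bigl((A(x_0)-A)\nabla u\bigr)$, and the energy estimate \eqref{energy-0} combined with \eqref{H-cond} yields
\[
\|\nabla w\|_{L^2(B_r(x_0))}\le CMr^\sg\|\nabla u\|_{L^2(B_r(x_0))}.
\]
Inserting the decay from Step 1 applied to $v$ on $B_r(x_0)$ and running the usual Morrey--Campanato iteration gives, for every $0<\al<\sg$,
\[
\inf_{P\in\mathcal{P}_1}\fint_{B_\rho(x_0)}|u-P|^2\le C\rho^{2(1+\al)}\fint_{B_2}|u|^2
\]
uniformly for $\rho\in(0,1]$, $x_0\in B_1$, and $\lm>0$. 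Campanato's characterization of Hölder spaces then delivers $u\in C^{1,\al}(B_1)$ with the claimed bound.

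\textbf{Main obstacle.} The crux is Step 1: securing decay uniformly as $\lm$ traverses all of $(0,\infty)$. The compactness argument must correctly identify three distinct limit operators, and the delicate regime is $\lm_n\to\infty$, where one must divide the equation by $\lm_n^2$ and exploit only the $L^2$ bound on $\nabla v_n$ from \eqref{Ca-2} (the natural $H^2$ bound $\lm\|\nabla^2 v\|_{L^2}\lesssim 1$ being useless here) to extract a biharmonic limit. The symmetric $\lm_n\to 0$ case requires verifying that the fourth-order term disappears in $H^{-2}$ despite $\nabla^2 v_n$ being only $O(1/\lm_n)$ in $L^2$; this follows because $\lm_n^2\Delta^2 v_n=\text{\rm div}(A_0^{(n)}\nabla v_n)$ is bounded in $H^{-1}$ uniformly.
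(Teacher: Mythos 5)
Your overall plan (decay estimate for the constant-coefficient operator, then freezing plus a Campanato iteration) is the same as the paper's, and Step 2 matches the paper's perturbation argument closely. The genuine problem is Step 1, where you propose to get the $\lm$-uniform decay by compactness. That argument does not close, for two intertwined reasons. First, the scale $\theta$ must be fixed \emph{before} you extract the subsequence, but the only thing you obtain from the limit $v_\infty$ being ``$C^\infty$'' is a decay estimate at a scale $\theta$ depending on $\|v_\infty\|_{C^2(B_{1/2})}$; without a bound on the latter that is uniform over all admissible limits, you cannot choose a single $\theta$ that produces the contradiction. Second, and more seriously, when $\lm_\infty\in(0,\infty)$ the limit operator is $\lm_\infty^2\Delta^2-\text{\rm div}(A_\infty\nabla)$, which is precisely the family you are trying to estimate; asserting that $v_\infty$ obeys a $C^2$ bound with a constant independent of $\lm_\infty$ is not classical elliptic regularity but exactly the conclusion of the lemma for constant coefficients. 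So the compactness argument is circular unless you already have the $\lm$-uniform interior estimate for constant $A_0$ --- and once you have that, compactness is unnecessary. (A smaller issue: your claim that $\lm_n^2\Delta^2 v_n$ vanishes in the limit ``because it is bounded in $H^{-1}$'' is not an argument; boundedness does not give a zero limit. The correct estimate is $|\lm_n^2\int\Delta v_n\,\Delta\phi|\le\lm_n^2\|\nabla^2 v_n\|_{L^2}\|\Delta\phi\|_{L^2}\le C\lm_n\to 0$, using the Caccioppoli bound $\lm_n\|\nabla^2 v_n\|_{L^2(B_{3/4})}\le C$.)

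The paper avoids all of this with a short direct argument for Step 1: when $A_0$ is constant, every derivative $\nabla^k v$ is again a solution of $\mathcal{L}^\lm_0(\cdot)=0$, so iterating the $\lm$-uniform Caccioppoli inequality \eqref{Ca-2} gives $\|v\|_{H^k(B_1)}\le C_k\|v\|_{L^2(B_{3/2})}$ with $C_k$ independent of $\lm$, and Sobolev embedding then yields $\max_{B_1}|\nabla^k v|\le C_k(\fint_{B_{3/2}}|v|^2)^{1/2}$ for every $k$. In particular the $C^2$ bound is uniform in $\lm$ and the excess decay at any fixed small scale $\theta$ follows by Taylor expansion, which is exactly what you need to feed into Step 2. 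You should replace the compactness step by this observation; the rest of your proof then goes through as in the paper.
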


\begin{proof}
We first observe that if  $A$ is a constant matrix satisfying the elasticity condition (\ref{econ}), then
\begin{equation}\label{C-1a-1}
\max_{B_1} |\nabla^k u|
\le C_k\left(\fint_{B_{{3/2}}} |u|^2\right)^{1/2},
\end{equation}
where $C_k$ depends on $d$, $\nu_1$, $\nu_2$ and $k$.
To see this, we note that since $A$ is constant,  $\nabla^k u$ is a solution.
Thus, by (\ref{Ca-2}) and an iteration argument,
$$
\| u\|_{H^k (B_1)}
\le C_k \| u\|_{L^2(B_{3/2})}
$$
for any $k\ge 1$.  By Sobolev imbedding, this gives (\ref{C-1a-1}).
Next, we use a standard perturbation argument to show that  if $A$ is uniformly continuous and $\gamma>0$,
\begin{equation}\label{C-1a-2}
\int_{B_\rho} |\nabla u|^2\, dx
\le C_\gamma \left(\frac{\rho}{R} \right)^{d-2\gamma}
\int_{B_R} |\nabla u|^2\, dx
\end{equation}
for $0< \rho< R< r$. To do this, we let $v\in H^2(B_R; \R^d)$ be the solution of
\begin{equation}\label{C-1a-3}
\lm^2 \Delta^2 v -\text{\rm div} (\overline{A} \nabla v) =0
\quad
\text{ in } B_R \quad 
\text{ and } v-u \in H^2_0(B_R ; \R^d),
\end{equation}
where 
$
\overline{A}=\fint_{B_R} A.
$
Since
$$
\lm^2 \Delta^2  (v-u) -\text{\rm div} (\overline{A} \nabla (v-u )) =\text{\rm div}( (\overline{A} -A)\nabla u) \quad \text{ in } B_R,
$$
by energy estimates,
$$
\int_{B_R} 
|\nabla u -\nabla v|^2\, dx
\le C \| \overline{A} -A\|^2_{L^\infty(B_R)} \int_{B_R} |\nabla u|^2\, dx .
$$
By (\ref{C-1a-1}), for $0<\rho< R< r$, 
$$
\fint_{B_\rho } |\nabla v|^2
\le C \fint_{B_R} |\nabla v|^2.
$$
The rest of the argument for (\ref{C-1a-2}) is exactly the same as in the case of second-order elliptic systems \cite[pp.84-88]{Gia-1983}.
An  argument  similar to that in \cite[pp.84-88]{Gia-1983} also shows that if $A$ satsifies (\ref{H-cond}), then
$$
\left(\fint_{B_r} |\nabla u
-\fint_{B_r} \nabla u  |^2 \right)^{1/2}
\le C_\alpha  r^\alpha \left(\fint_{B_2} |u|^2 \right)^{1/2}
$$ 
for any $\alpha \in (0, \sigma)$ and $0<r<1$.
This implies (\ref{C-1-a}).
\end{proof}
 
 The following theorem gives the $C^{1, \alpha}$ estimate, uniform in $\lm$, for the operator $\mathcal{L}^\lm$.
 
\begin{theorem}\label{loc-thm}
Suppose $A$ satisfies conditions \eqref{econ}  and \eqref{H-cond}.
Let $u \in H^2(B_2; \R^d )$ be a weak solution of
$\mathcal{L}^\lm (u) = F$ in $B_2$, where $F\in L^p(B_2; \R^d)$ for some $p>d$.
Then, if $0< \alpha <\min (\sigma , 1-\frac{d}{p})$, 
\begin{equation}\label{loc-est}
\| u\|_{C^{1, \alpha}(B_1)}
\le C_\alpha  \Big\{ \| u\|_{L^2(B_2)}
+ \|F\|_{L^p(B_2)} \Big\}, 
\end{equation}
where $C_\alpha$ depends on $d$, $\nu_1$, $\nu_2$, $p$, $\alpha$, and $(M,\sigma)$.
\end{theorem}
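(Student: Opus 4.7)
The plan is to follow a standard freezing/Campanato perturbation scheme, with Lemma \ref{lemma-r-1} supplying the $C^{1,\alpha}$ regularity for the homogeneous operator (uniformly in $\lm$) and the $L^p$ term $F$ being absorbed as a controlled lower-order perturbation. Throughout, the key technical point is that every bound I invoke from Section \ref{section-2} is independent of $\lm$, so the scheme will close uniformly in $\lm$.

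Fix $x_0 \in B_1$ and $0 < r \le 1/4$, and write $B_r = B(x_0, r)$. First I decompose $u = v + w$ on $B_r$ where $v \in H^2(B_r; \R^d)$ solves the homogeneous equation $\mathcal{L}^\lm v = 0$ with $v - u \in H^2_0(B_r; \R^d)$; then $w = u - v \in H^2_0(B_r; \R^d)$ satisfies $\mathcal{L}^\lm w = F$. Using the energy estimate \eqref{energy-0} together with H\"older and Poincar\'e (which, for $p > d \ge 2$, give $\|F\|_{H^{-1}(B_r)} \le C r^{1 + d/2 - d/p}\|F\|_{L^p(B_r)}$), I obtain
\begin{equation*}
\int_{B_r} |\nabla w|^2\, dx \le C\, r^{d + 2(1 - d/p)} \|F\|_{L^p(B_r)}^2.
\end{equation*}

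Next I apply Lemma \ref{lemma-r-1} to $v - \bar{v}$, where $\bar{v}$ is the mean of $v$ on $B_r$ (permissible since $\mathcal{L}^\lm$ annihilates constants). After rescaling $B_r$ to $B_1$, the rescaled operator is $\mathcal{L}^{\lm/r}$, which is still covered by Lemma \ref{lemma-r-1} with a constant independent of the singular parameter. Combining this with Poincar\'e's inequality yields, for any $\alpha \in (0, \sigma)$,
\begin{equation*}
r^\alpha [\nabla v]_{C^\alpha(B_{r/2})} \le C \left(\fint_{B_r} |\nabla v|^2\right)^{1/2},
\end{equation*}
which in turn produces the Campanato-type decay
\begin{equation*}
\int_{B_\rho} |\nabla v - (\nabla v)_{B_\rho}|^2\, dx \le C (\rho/r)^{d + 2\alpha} \int_{B_r} |\nabla v|^2\, dx, \qquad 0 < \rho \le r/4.
\end{equation*}
Adding $\nabla u = \nabla v + \nabla w$, controlling $\int_{B_r}|\nabla v|^2$ by $\int_{B_r}|\nabla u|^2 + \int_{B_r}|\nabla w|^2$, and plugging in the $w$-bound gives
\begin{equation*}
\int_{B_\rho} |\nabla u - (\nabla u)_{B_\rho}|^2\, dx \le C (\rho/r)^{d + 2\alpha} \int_{B_r} |\nabla u|^2\, dx + C\, r^{d + 2(1 - d/p)} \|F\|_{L^p(B_r)}^2.
\end{equation*}

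For $\alpha < \min(\sigma, 1 - d/p)$ the exponent $d + 2(1-d/p)$ strictly exceeds $d + 2\alpha$, so a standard iteration lemma (e.g.~\cite[Ch.~III]{Gia-1983}) upgrades this to $\int_{B_\rho(x_0)}|\nabla u - (\nabla u)_\rho|^2 \le C \rho^{d+2\alpha}\{\|u\|_{L^2(B_2)}^2 + \|F\|_{L^p(B_2)}^2\}$ uniformly in $x_0 \in B_1$; this also requires a preliminary $L^2$ estimate on $\nabla u$ in $B_{3/2}$, which follows from Caccioppoli (Theorem \ref{Ca-thm}) plus Sobolev. The Campanato characterization of $C^{0,\alpha}$ then delivers $\nabla u \in C^\alpha(B_1)$ with the bound claimed in \eqref{loc-est}. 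The main obstacle is precisely the uniformity in $\lm$: the rescaled parameter $\lm/r$ is unbounded as $r \to 0$, so we are relying crucially on the fact that Lemma \ref{lemma-r-1} (and Theorem \ref{Ca-thm}) is already proved with $\lm$-independent constants; once that is in hand, the remaining argument is purely routine.
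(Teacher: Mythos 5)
Your decomposition $u = v + w$ with $v$ the homogeneous comparison solution, the use of Lemma \ref{lemma-r-1} for the frozen problem with $\lm$-independent constants, and the perturbation scheme all match the paper's route. But the Campanato inequality you arrive at has $\int_{B_r}|\nabla u|^2$ on the right rather than the oscillation $\int_{B_r}|\nabla u - (\nabla u)_{B_r}|^2$, and the iteration lemma you then invoke cannot be applied: it requires the \emph{same} nondecreasing quantity $\phi$ on both sides, in the form $\phi(\rho)\le C(\rho/r)^a\phi(r)+Br^b$. Here the right side involves the strictly larger quantity $\int_{B_r}|\nabla u|^2$, which does not tend to zero as $r\to 0$ (a priori it only decays like $r^d$), so the iteration does not close. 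Fixing $r$ in your inequality leaves the non-vanishing error $Cr^{d+2(1-d/p)}\|F\|^2$, and shrinking $r$ with $\rho$ gives away the $(\rho/r)^{d+2\alpha}$ gain; either way the needed $\rho^{d+2\alpha}$ decay of the oscillation is not obtained.

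The missing observation — which the paper uses implicitly when it writes the decay with mean-subtracted quantities on both sides — is that $\mathcal{L}^\lm$ annihilates affine functions, not merely constants. Apply Lemma \ref{lemma-r-1} to $v - \bar v - (\nabla v)_{B_r}\cdot(x - x_0)$ rather than to $v - \bar v$; this function is still a solution, has zero mean and zero mean gradient on $B_r$, so Poincar\'e gives
\begin{equation*}
\int_{B_\rho}|\nabla v - (\nabla v)_{B_\rho}|^2 \le C\Big(\frac{\rho}{r}\Big)^{d+2\alpha}\int_{B_r}|\nabla v - (\nabla v)_{B_r}|^2.
\end{equation*}
Combining with the energy bound on $w$ and $\int_{B_r}|\nabla v - (\nabla v)_{B_r}|^2\le 2\int_{B_r}|\nabla u - (\nabla u)_{B_r}|^2 + 2\int_{B_r}|\nabla w|^2$ yields
\begin{equation*}
\int_{B_\rho}|\nabla u - (\nabla u)_{B_\rho}|^2 \le C\Big(\frac{\rho}{r}\Big)^{d+2\alpha}\int_{B_r}|\nabla u - (\nabla u)_{B_r}|^2 + Cr^{d+2(1-d/p)}\|F\|_{L^p(B_2)}^2,
\end{equation*}
which is the inequality the iteration lemma accepts, applied to $\phi(\rho)=\inf_c\int_{B_\rho}|\nabla u - c|^2$ (nondecreasing and comparable to the oscillation), under $\alpha<\min(\sigma,1-d/p)$. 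With this one correction your argument coincides with the paper's.
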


\begin{proof}

The case $F=0$ was given by Lemma \ref{lemma-r-1}.
The general case is proved by a perturbation argument as in the case of second-order elliptic systems.
Let $0<r<R<1$.
Let $v\in H^2(B_R;\R^d)$ be the weak solution of
$\mathcal{L}^\lm (v)=0$ in $B_R$ such that $v-u\in H^2_0(B_R; \R^d)$.
Since $\mathcal{L}^\lm (u-v)=F$ in $B_R$, by the energy estimate,
\begin{equation}\label{local-10}
\int_{B_R} |\nabla u -\nabla v|^2\, dx 
\le C R^2  \int_{B_R} |F|^2\, dx
\le C R^{d+ 2(1-\frac{d}{p})} \| F\|^{p/2} _{L^p(B_2)},
\end{equation}
where $C$ depends only on $d$, $\nu_1$, $\nu_2$, and $p$.
By Lemma \ref{lemma-r-1},
$$
\int_{B_r}
|\nabla v -\fint_{B_r} \nabla v|^2\, dx
\le C \left(\frac{r}{R}\right)^{d+2 \alpha} 
\int_{B_R}
|\nabla v -\fint_{B_R} \nabla v|^2\, dx
$$
for any $0<\alpha<\sigma$.
This, together with (\ref{local-10}), leads to
$$
\aligned
\int_{B_r}
|\nabla u -\fint_{B_r} \nabla u |^2\, dx
 & \le C \left(\frac{r}{R}\right)^{d+2 \alpha}
\int_{B_R}
|\nabla u -\fint_{B_R} \nabla u|^2\, dx\\
&\qquad
+  C R^{d+ 2(1-\frac{d}{p})} \| F\|^{p/2} _{L^p(B_2)},
\endaligned
$$
from which the estimate (\ref{loc-est}) follows, as in \cite[pp.88-89]{Gia-1983}. We omit the details.
\end{proof}

\subsection{Singular perturbations}

For $\Omega\subset  \R^d$ and $0<t< c_0 \text{diam}(\Omega)$, let
\begin{align} \label{omegava}
\Omega_t=\{x\in\Omega: \text{\rm  dist} (x, \partial \Omega)<t\} .
 \end{align}

\begin{lemma}\label{lsmooth4}
Let $\Omega$ be a bounded Lipschitz  domain in $\R^d$.
Then, 
\begin{align}
\| u\|_{L^2(\Omega_t)}  & \le C t   \|\nabla u\|_{L^2(\Omega_{2t})}  & \text{ for } u\in H^1_0(\Omega), \label{241-0}\\
\| u\|_{L^2(\Omega_t) } & \le C t^{1/2} \| u \|_{L^2(\Omega)}^{1/2}
\| u\|_{H^1(\Omega)}^{1/2} \label{241-1}
 & \text{ for } u\in H^1(\Omega),
\end{align}
and for  $u\in H^2(\Omega)\cap H^1_0(\Omega)$,
\begin{align}\label{241}
\|  u \|_{L^{2}(\Omega_t)}\leq Ct^{3/2}  \|u\|^{1/2}_{H^1(\Omega)} \|u\|^{1/2}_{H^{2}(\Omega)},
\end{align} 
where $C$ depends on $d$ and $\Omega$.
\end{lemma}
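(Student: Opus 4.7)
The plan is to prove the three estimates in order, with \eqref{241} following by combining \eqref{241-0} and \eqref{241-1} applied to $\nabla u$. The main technical input for all of them is a local-flattening argument on the Lipschitz boundary, together with the standard trace inequality $\|u\|_{L^2(\partial\Omega)}^2 \le C\|u\|_{L^2(\Omega)}\|u\|_{H^1(\Omega)}$.

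For \eqref{241-0}, I would cover $\partial\Omega$ by finitely many balls $B_i$ in which, after rotation, $\Omega\cap B_i$ is the epigraph $\{x_d>\psi_i(x')\}$ of a Lipschitz function $\psi_i$. Using a partition of unity it suffices to work in one such patch. For $u\in H^1_0(\Omega)$, extend by zero outside $\Omega$; then for a.e.\ $x'$ the function $s\mapsto u(x',s)$ vanishes at $s=\psi_i(x')$, so by the fundamental theorem of calculus
\[
|u(x',x_d)|^2 = \Bigl|\int_{\psi_i(x')}^{x_d} \partial_d u(x',s)\, ds\Bigr|^2 \le C t \int_{\psi_i(x')}^{\psi_i(x')+Ct} |\nabla u(x',s)|^2\,ds
\]
whenever $x_d-\psi_i(x')\lesssim t$ (so that $x\in\Omega_t$). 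Integrating in $x_d$ over an interval of length $\lesssim t$ above $\psi_i(x')$ and then in $x'$ yields the desired bound $\|u\|_{L^2(\Omega_t\cap B_i)}\le Ct\,\|\nabla u\|_{L^2(\Omega_{2t}\cap CB_i)}$; summing over the patches gives \eqref{241-0}.

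For \eqref{241-1}, I work in the same local coordinates but do not assume the boundary trace vanishes. For $u\in C^\infty(\overline\Omega)$ (and then by density for $u\in H^1(\Omega)$), the identity $u(x',s)^2 = u(x',\psi_i(x'))^2 + 2\int_{\psi_i(x')}^s u\,\partial_d u\,dr$ together with Cauchy--Schwarz yields, after integrating $s$ over a slab of width $\lesssim t$ and summing patches,
\[
\|u\|_{L^2(\Omega_t)}^2 \le C t\,\|u\|_{L^2(\partial\Omega)}^2 + C t\,\|u\|_{L^2(\Omega)}\|\nabla u\|_{L^2(\Omega)}.
\]
Invoking the trace inequality on the first term bounds it by $Ct\,\|u\|_{L^2(\Omega)}\|u\|_{H^1(\Omega)}$, and \eqref{241-1} follows after taking a square root. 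The only delicate point here, and the main technical hurdle, is justifying the pointwise trace identity on a Lipschitz boundary and controlling the interaction between the slab thickness measured by $\operatorname{dist}(\cdot,\partial\Omega)$ and the flat-slab thickness $x_d-\psi_i(x')$ in the local charts; this is handled in the standard way using that $\psi_i$ is Lipschitz, so the two notions of width are comparable up to constants depending only on $\Omega$.

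Finally, to prove \eqref{241}, let $u\in H^2(\Omega)\cap H_0^1(\Omega)$. Since $u\in H_0^1(\Omega)$, \eqref{241-0} gives
\[
\|u\|_{L^2(\Omega_t)} \le Ct\,\|\nabla u\|_{L^2(\Omega_{2t})}.
\]
Applying \eqref{241-1} componentwise to $\nabla u\in H^1(\Omega;\R^d)$ with width $2t$ yields
\[
\|\nabla u\|_{L^2(\Omega_{2t})} \le C t^{1/2}\,\|\nabla u\|_{L^2(\Omega)}^{1/2}\,\|\nabla u\|_{H^1(\Omega)}^{1/2} \le C t^{1/2}\,\|u\|_{H^1(\Omega)}^{1/2}\,\|u\|_{H^2(\Omega)}^{1/2}.
\]
Multiplying the two estimates produces the $t^{3/2}$ factor and establishes \eqref{241}. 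Thus the sequence \eqref{241-0}$\Rightarrow$\eqref{241-1}$\Rightarrow$\eqref{241} is the cleanest route; the work is concentrated entirely in \eqref{241-0} and \eqref{241-1}, which are in turn routine once the Lipschitz boundary is flattened locally.
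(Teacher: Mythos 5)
Your plan matches the paper's proof, which is only sketched: the paper states that \eqref{241-0} and \eqref{241-1} follow from a localization argument and that \eqref{241} follows readily from them, which is exactly your chain \eqref{241-0} applied to $u$ combined with \eqref{241-1} applied to $\nabla u$. The localization details you supply (flattening the Lipschitz boundary, fundamental theorem of calculus plus Cauchy--Schwarz in the vertical variable, the interpolation trace inequality for \eqref{241-1}) are the standard and correct way to fill in the sketch.
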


\begin{proof}
The inequalities  (\ref{241-0}) and (\ref{241-1})  may be proved  by  a localization argument, while (\ref{241}) follows readily from (\ref{241-0})-(\ref{241-1}).
\end{proof}

\begin{lemma}\label{s-p-lemma}
Let $u_\lm \in H^2(\Omega; \R^d)$ be a weak solution of $\mathcal{L}^\lm (u_\lm)=F$ 
with $u_\lm-G\in H^2_0(\Omega; \R^d)$, where $F\in L^2(\Omega; \R^d )$, $G\in H^2(\Omega; \R^d)$, and 
$\Omega$ is a bounded Lipschitz domain.
Let $u_0\in H^1(\Omega; \R^d)$ be the weak solution of $-\text{\rm div} (A\nabla u_0)=F$ in $\Omega$ 
and $u_0-G \in H^1_0(\Omega; \R^d)$.
Suppose $u_0\in H^2(\Omega; \R^d)$.
Then for $0<\lm \le 1$,
\begin{equation}\label{s-p-1}
\| \nabla u_\lm -\nabla u_0\|_{L^2(\Omega)}
\le C \sqrt{\lm}\big\{ \| u_0 \|_{H^2(\Omega)} + \|G \|_{H^2(\Omega)} \big\},
\end{equation}
where $C$ depends only on $d$, $\nu_1$, $\nu_2$,  and $\Omega$.
\end{lemma}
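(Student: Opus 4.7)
The plan is to adapt the classical Vishik--Lyusternik boundary-layer method to the elasticity setting. Set $w := u_\lm - u_0 \in H^2(\Om;\R^d)$. Since $u_\lm - G \in H^2_0$ and $u_0 - G\in H^1_0$, one has $w=0$ on $\pa\Om$, but the normal derivative $g := \pa_\nu w = \pa_\nu G - \pa_\nu u_0$ is a generic element of $L^2(\pa\Om;\R^d)$, with $\|g\|_{L^2(\pa\Om)} \le C\{\|u_0\|_{H^2(\Om)}+\|G\|_{H^2(\Om)}\}$ by the trace theorem (here I use crucially the assumption $u_0 \in H^2(\Om)$). This non-zero normal trace is precisely the obstruction to using $w$ itself as a test function in the $H^2_0$ weak formulation of $\mc{L}^\lm(u_\lm)=F$.

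The first key step is to construct a boundary-layer corrector $\psi_\lm \in H^2(\Om;\R^d)$, supported in $\Om_{2\lm}$, with $\psi_\lm = 0$ and $\pa_\nu \psi_\lm = g$ on $\pa\Om$. Using an ansatz of the form $\psi_\lm(x) = \chi(d(x)/\lm)\, d(x)\, \wt g(x)$ on a tubular neighborhood of $\pa\Om$, where $d(x)$ is (a smoothing of) the distance to $\pa\Om$, $\wt g$ is an $H^1$-extension of $g$ into $\Om_{2\lm}$, and $\chi \in C_c^\infty([0,2))$ satisfies $\chi(0)=1$, I will obtain the scaling
\begin{equation*}
\|\na^k \psi_\lm\|_{L^2(\Om)} \le C\lm^{3/2-k}\|g\|_{L^2(\pa\Om)}, \qquad k=0,1,2.
\end{equation*}
Then $\phi := w - \psi_\lm$ satisfies $\phi = 0$ and $\pa_\nu \phi = 0$ on $\pa\Om$, so $\phi \in H^2_0(\Om;\R^d)$.

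Subtracting the $H^1_0$ weak formulation of $-\text{\rm div}(A\na u_0) = F$ from the $H^2_0$ weak formulation of $\mc{L}^\lm(u_\lm) = F$, both tested against this $\phi$, and expanding $\na^2 w = \na^2 u_\lm - \na^2 u_0$, I arrive at the energy identity
\begin{equation*}
\int_\Om A\na w \cdot \na w\, dx + \lm^2 \int_\Om |\na^2 u_\lm|^2\, dx = \int_\Om A\na w \cdot \na \psi_\lm\, dx + \lm^2 \int_\Om \na^2 u_\lm : \na^2 u_0\, dx + \lm^2 \int_\Om \na^2 u_\lm : \na^2 \psi_\lm\, dx.
\end{equation*}
The left side is bounded below by $c\|\na w\|_{L^2}^2 + \lm^2\|\na^2 u_\lm\|_{L^2}^2$, using the symmetries of $A$ together with Korn's first inequality applied to the $H^1_0$ function $w$. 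Applying Cauchy--Schwarz and Young's inequality to each term on the right, absorbing small multiples of $\|\na w\|_{L^2}^2$ and $\lm^2\|\na^2 u_\lm\|_{L^2}^2$ into the left, and using the $\psi_\lm$-bounds together with $\lm^2 \|\na^2 u_0\|_{L^2}^2 \le \lm\|u_0\|_{H^2}^2$ (valid since $\lm \le 1$), I conclude
\begin{equation*}
\|\na w\|_{L^2(\Om)}^2 \le C\lm \big\{\|u_0\|_{H^2(\Om)}^2 + \|G\|_{H^2(\Om)}^2\big\},
\end{equation*}
which yields \eqref{s-p-1}.

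The main technical obstacle will be constructing the corrector $\psi_\lm$ and verifying its scaling on a merely Lipschitz domain; this requires a partition-of-unity flattening of $\pa\Om$ together with a mollification of the distance function so the ansatz makes sense. The remaining ingredients---subtraction of the two weak formulations, the elasticity form of Korn's inequality on $H^1_0$, and the Young absorption---are essentially routine.
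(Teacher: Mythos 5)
Your overall strategy — peel off a boundary-layer corrector supported in a strip of width $\sim\lambda$ so that the remainder lies in $H^2_0(\Omega;\R^d)$, subtract the two weak formulations tested against this remainder, and absorb via Cauchy--Schwarz and Korn — is exactly the philosophy of the paper's proof. The difference lies in how the corrector is built, and that difference is where your argument has a real gap.

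The paper takes $\psi_\lm := -(u_0-G)(1-\eta_t)$ with $t\sim\lm$, where $\eta_t$ is a cut-off vanishing in $\Omega_{3t}$ and equal to $1$ off $\Omega_{4t}$. Since $u_0-G\in H^1_0\cap H^2$, this function is automatically in $H^2(\Omega;\R^d)$, is supported in $\Omega_{4t}$, has zero Dirichlet trace, and near $\partial\Omega$ coincides with $-(u_0-G)$; so $(u_\lm-G)-(u_0-G)\eta_t \in H^2_0$ follows immediately because $(u_\lm-G)\in H^2_0$ and $(u_0-G)\eta_t$ is compactly supported in $\Omega$. The required scalings $\|\nabla^k[(u_0-G)(1-\eta_t)]\|_{L^2}\lesssim \lm^{3/2-k}\|u_0-G\|_{H^2}$ for $k=0,1,2$ then follow directly from the Lipschitz-domain strip estimates in Lemma \ref{lsmooth4}, with no trace theory beyond the obvious. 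In other words, $u_0-G$ is already a valid extension of the boundary data $(0,g)$, and one only needs to cut it off.

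You, instead, try to manufacture a generic lifting $\psi_\lm$ from the ansatz $\chi(d(x)/\lm)\,d(x)\,\wt g(x)$, prescribing $\psi_\lm=0$ and $\partial_\nu\psi_\lm=g$ on $\partial\Omega$ with $\|\nabla^k\psi_\lm\|_{L^2}\lesssim \lm^{3/2-k}\|g\|_{L^2(\partial\Omega)}$. This is where the proof breaks down. First, the bound for $k=2$ cannot hold for a generic $g\in L^2(\partial\Omega)$: if $\psi_\lm\in H^2(\Omega)$ then $\nabla\psi_\lm\in H^1(\Omega)$ and its trace lies in $H^{1/2}(\partial\Omega)$, which is a strict subspace of $L^2(\partial\Omega)$; so there is no $H^2$ lifting of $(0,g)$ at all for $g\in L^2\setminus H^{1/2}$, and a fortiori no bound by $\|g\|_{L^2(\partial\Omega)}$. (One would need, at the very least, an $H^{1/2}(\partial\Omega)$ norm of $g$, or equivalently the $H^1(\Omega)$ norm of an extension of $g$ — but then your ansatz with a bare mollified distance still does not trivially give $\|\nabla^2\psi_\lm\|_{L^2}\lesssim\lm^{-1/2}$, as the term $\chi(d/\lm)\,d\,\nabla^2\wt g$ is not controlled without further structure on $\wt g$.) Second, on a merely Lipschitz domain the regularized distance has $|\nabla^k\delta|\lesssim\delta^{1-k}$ but $\nabla\delta$ does not converge to $-\nu$ at $\partial\Omega$, so the ansatz does not in general produce the desired normal trace; fixing this requires a partition-of-unity flattening and considerable extra work that you have only gestured at. All of this can be done, but it is a genuine technical program that your proposal does not carry out. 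The clean observation that resolves it in one line — that $(u_0-G)$ itself, cut off near the boundary, is the needed corrector and that Lemma \ref{lsmooth4} gives all the required scalings — is what the paper uses, and it is the step your proof is missing.

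As a minor remark, once the corrector is fixed, your energy identity and absorption steps (including Korn on $H^1_0$ and the inequality $\lm^2\|\nabla^2 u_0\|^2\le\lm\|u_0\|_{H^2}^2$ for $\lm\le1$) are exactly right and agree with the paper's estimate (\ref{s-p-4}).
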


\begin{proof}
Let $\eta_t$ be a cut-off function in $C_0^\infty(\Omega)$ such that
$0\le \eta_t\le 1$, $\eta_t (x)=1$ if  $x\in  \Omega\setminus \Omega_{2t}$,
$\eta_t(x)=0$ if $x\in \Omega_t$, and
$|\nabla^k \eta_t|\le C t^{-k}$ for $k=1, 2$, where $t>0$ is to be determined.
Let  $\widetilde{u}_0 =u_0-G$ and 
\begin{equation}\label{s-p-2}
 w=u_\lm -G - (u_0-G) \eta_t
 =u_\lm -u_0 + \widetilde{u}_0 (1-\eta_t).
 \end{equation}
Note that $w\in H^2_0(\Omega; \R^d )$ and
$$
\aligned
\mathcal{L}^\lm (w)
&=\mathcal{L}^\lm (u_\lm) -\mathcal{L}^\lm (u_0)   + \mathcal{L}^\lm \big[ \widetilde{u}_0 (1-\eta_t)\big]\\
&=-\lm^2 \Delta^2 u_0
+\lm^2 \Delta^2  ( \widetilde{u}_0 (1-\eta_t ))
-\text{\rm div} \big[ A\nabla (\widetilde{ u} _0 (1-\eta_t)) \big].
\endaligned
$$
It follows that for any $\psi\in H_0^2(\Omega; \R^d)$,
$$
\aligned
 | \langle \mathcal{L}^\lm (w) , \psi \rangle |
 & \le \lm^2 \int_\Omega |\Delta u_0 | |\Delta \psi|\, dx
+ \lm^2 \int_\Omega 
|\Delta ( \widetilde{u}_0 (1-\eta_t))| |\Delta \psi|\, dx\\
& \qquad \qquad
+ C \int_\Omega |\nabla (\widetilde{u} _0 (1-\eta_t))| |\nabla \psi|\, dx.
\endaligned
$$
By using the Cauchy inequality and Lemma \ref{lsmooth4}, we obtain 
\begin{equation}\label{s-p-3}
\aligned
 | \langle \mathcal{L}^\lm (w) , \psi \rangle |
  & \le 
  \lm^2 \|u_0\|_{H^2(\Omega)} \| \Delta \psi\|_{L^2(\Omega)}
 + C \lm^2 t^{-1/2} \| \widetilde{ u } _0\|_{H^2(\Omega)}  \|\Delta \psi \|_{L^2(\Omega_{2t})}\\
 & \qquad\qquad
  + C t^{1/2} \| \widetilde{u}_0\|_{H^2(\Omega)} \|\nabla \psi\|_{L^2(\Omega_{2t})}.
\endaligned
\end{equation}
By taking $\psi=w$ in (\ref{s-p-3}), $t=c_0 \lm$,  and using the Cauchy inequality,  we see that 
\begin{equation}\label{s-p-4}
\lm \| \Delta w\|_{L^2(\Omega)}
+\|\nabla w\|_{L^2(\Omega)}
\le C \lm^{1/2}
\big\{ \| u_0\|_{H^2(\Omega)}
+ \|G \|_{H^2(\Omega)}  \big\}.
\end{equation}
In view of (\ref{s-p-2}) this gives (\ref{s-p-1}).
\end{proof}

\begin{theorem}\label{s-p-theorem}
Let $u_\lm$ and $u_0$ be the same as in Lemma \ref{s-p-lemma}.
Also assume that $\Omega$ is a bounded $C^{1, 1}$ domain and $\|\nabla A\|_\infty \le L<\infty$.
Then for $0< \lm\le 1$,
\begin{equation}\label{s-p-10}
\| u_\lm - u_0 \|_{L^2(\Omega)}
\le C \lm  \big\{ \| F\|_{L^2(\Omega)}
+ \| G\|_{H^2(\Omega)} \big\},
\end{equation}
where $C$ depends on $d$, $\nu_1$, $\nu_2$, $L$, and $\Omega$.
\end{theorem}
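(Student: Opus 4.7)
The plan is to upgrade the $H^1$ rate from Lemma \ref{s-p-lemma} to the $L^2$ rate \eqref{s-p-10} by an Aubin--Nitsche duality argument, using the full $H^2$-regularity of the limit problem afforded by the $C^{1,1}$ boundary and the Lipschitz bound on $A$. For $\psi \in L^2(\Omega; \R^d)$, I would let $\phi_0 \in H^1_0(\Omega; \R^d) \cap H^2(\Omega; \R^d)$ solve $-\text{\rm div}(A\nabla \phi_0) = \psi$ in $\Omega$, with the standard estimate $\|\phi_0\|_{H^2} \le C\|\psi\|_{L^2}$. Since $u_\lm - u_0 \in H^1_0(\Omega; \R^d)$ and $A$ satisfies the symmetry in \eqref{econ}, integration by parts yields
\[
\int_\Omega (u_\lm - u_0)\cdot\psi\,dx \,=\, \int_\Omega A\nabla(u_\lm - u_0)\cdot\nabla\phi_0\,dx.
\]
The obstacle is that $\phi_0 \notin H^2_0(\Omega; \R^d)$, so it cannot serve as a test function for the fourth-order equation $\mc{L}^\lm(u_\lm) = F$; it must be cut off near $\partial\Omega$.

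Taking the cut-off $\eta_t$ from the proof of Lemma \ref{s-p-lemma} with $t = c_0\lm$, the function $\phi_0\eta_t$ lies in $H^2_0(\Omega; \R^d)$. Testing $\mc{L}^\lm(u_\lm) = F$ against $\phi_0\eta_t$, testing $-\text{\rm div}(A\nabla u_0) = F$ against $\phi_0 \in H^1_0$, and subtracting yields the identity
\[
\int_\Omega (u_\lm - u_0)\psi\,dx \,=\, -\int_\Omega F\phi_0(1-\eta_t)\,dx \,-\, \lm^2 \int_\Omega \Delta u_\lm\cdot\Delta(\phi_0\eta_t)\,dx \,+\, \int_\Omega A\nabla u_\lm\cdot\nabla(\phi_0(1-\eta_t))\,dx.
\]
The first and third integrals are concentrated in the boundary strip $\Omega_{2t}$. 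By \eqref{241} I have $\|\phi_0\|_{L^2(\Omega_{2t})} \le Ct^{3/2}\|\phi_0\|_{H^2}$, and a direct computation with \eqref{241-1} gives $\|\nabla(\phi_0(1-\eta_t))\|_{L^2(\Omega)} \le Ct^{1/2}\|\phi_0\|_{H^2}$. For the third integral I would split $\nabla u_\lm = \nabla u_0 + \nabla(u_\lm - u_0)$; the part with $\nabla u_0$ is controlled by applying \eqref{241-1} to $\nabla u_0$ in the strip, and the part with $\nabla(u_\lm - u_0)$ by the $H^1$ rate from Lemma \ref{s-p-lemma}. With $t = c_0\lm$, both the first and third terms become $O(\lm)(\|F\|_{L^2}+\|G\|_{H^2})\|\psi\|_{L^2}$.

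The hard part is the middle term. The naive energy estimate \eqref{energy-0} only provides $\|\Delta u_\lm\|_{L^2} \le C\lm^{-1}(\|F\|_{L^2}+\|G\|_{H^2})$, which, combined with the bound $\|\Delta(\phi_0\eta_t)\|_{L^2} \le C(1+t^{-1/2})\|\psi\|_{L^2}$, would yield only $O(\lm^{1/2})$. The sharpening I need will be read off from inside the proof of Lemma \ref{s-p-lemma}: its central bound $\lm\|\Delta w\|_{L^2} \le C\lm^{1/2}(\|u_0\|_{H^2}+\|G\|_{H^2})$ on $w = u_\lm - u_0 + \tilde u_0(1-\eta_t)$, combined with the direct estimate $\|\Delta(\tilde u_0(1-\eta_t))\|_{L^2} \le C(1+t^{-1/2})\|\tilde u_0\|_{H^2}$ and the $H^2$ regularity $\|u_0\|_{H^2} \le C(\|F\|_{L^2}+\|G\|_{H^2})$ of the limit problem, will produce the refined estimate
\[
\|\Delta u_\lm\|_{L^2(\Omega)} \,\le\, C\lm^{-1/2}(\|F\|_{L^2}+\|G\|_{H^2}).
\]
With this in hand, the middle term becomes $O(\lm^{3/2}(1+t^{-1/2})) = O(\lm)$ at $t = c_0\lm$. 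Assembling the three contributions and taking the supremum over unit $\psi \in L^2(\Omega; \R^d)$ would give \eqref{s-p-10}.
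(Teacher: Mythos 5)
Your argument is correct, and it is a genuinely different duality route from the paper's. The paper solves \emph{two} dual problems---$\mc{L}^\lm(v_\lm)=\widetilde F$ in $H^2_0$ and $-\text{div}(A\nabla v_0)=\widetilde F$ in $H^1_0$---pairs $\mc{L}^\lm(w)$ with $v_\lm$ (where $w$ is the corrected difference from (\ref{s-p-2})), and then splits $v_\lm=\widetilde w+v_0\widetilde\eta_t$ with $\widetilde w=v_\lm-v_0\widetilde\eta_t$. Both $w$ and $\widetilde w$ are controlled by (\ref{s-p-4}), and the cut-off $\widetilde\eta_t$ is chosen to vanish on $\Omega_{2t}$ precisely so that the boundary-strip terms in (\ref{s-p-3}) drop out when paired with $v_0\widetilde\eta_t$. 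This symmetric structure makes the refined bound on $\Delta u_\lm$ unnecessary: the $\lm^2\Delta^2$ contribution from $\mc{L}^\lm(w)$ carries $\Delta u_0$ (not $\Delta u_\lm$), which is harmless. You instead solve only the single second-order dual problem $-\text{div}(A\nabla\phi_0)=\psi$, test the original equation against $\phi_0\eta_t$, and absorb the resulting bulk perturbation term $\lm^2\int\Delta u_\lm\cdot\Delta(\phi_0\eta_t)$ by extracting the interior estimate $\|\Delta u_\lm\|_{L^2}\le C\lm^{-1/2}(\|F\|_{L^2}+\|G\|_{H^2})$ from (\ref{s-p-4}) via the decomposition $\Delta u_\lm=\Delta w+\Delta u_0-\Delta(\widetilde u_0(1-\eta_t))$. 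That estimate is indeed correct (the last piece is $O(t^{-1/2})\|\widetilde u_0\|_{H^2}$ by (\ref{241}) and (\ref{241-1}), and the $H^2$ regularity closes it), and with $t=c_0\lm$ all three terms land at $O(\lm)$, with the leftover $\|\widetilde u_0(1-\eta_t)\|_{L^2}$ at $O(\lm^{3/2})$. The trade-off is transparent: your route is leaner on dual problems and more direct, at the cost of having to re-open Lemma \ref{s-p-lemma} to get the $\lm^{-1/2}$ bound on $\Delta u_\lm$; the paper's route is more self-contained in that it re-uses Lemma \ref{s-p-lemma} as a black box on both the primal and dual sides, but needs the somewhat fussier splitting of $v_\lm$ and a second cut-off supported strictly away from the boundary.
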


\begin{proof}
Let $w$ be given by (\ref{s-p-2}) with $t=c_0 \lm$.
For $\widetilde{F} \in L^2(\Omega; \R^d)$,
let  $\widetilde{w} = v_\lm -v_0  \widetilde{\eta}_t$, where 
$v_\lm\in H_0^2(\Omega;\R^d)$ is the weak solution of $
\mathcal{L}^\lm (v_\lm )=\widetilde{F}$ in $\Omega$ and
$v_0\in H^1_0(\Omega; \R^d)$  the weak solution of $-\text{\rm div}(A\nabla v_0)=\widetilde{F} $ in $\Omega$.
The function $\widetilde{\eta}_t\in C_0^\infty (\Omega)$ is chosen so that $ 0\le \widetilde{\eta}_t \le 1$,
$\widetilde{\eta}_t =1$ in $\Omega\setminus \Omega_{3t}$, $\widetilde{\eta}_t =0$ in $\Omega_{2t}$, 
and $|\nabla^k \widetilde{\eta}_t|\le C t^{-k}$ for $k=1,2$.
Note that
$$
\aligned
\Big|\int_\Omega
w\cdot \widetilde{F} \, dx \Big|
& = | \langle \mathcal{L}^\lm (w), v_\lm \rangle |\\
&\le |\langle \mathcal{L}^\lm (w), \widetilde{w} \rangle |
+|\langle \mathcal{L}^\lm (w), v_0 \widetilde{\eta}_t \rangle |.
\endaligned
$$
It follows from (\ref{s-p-4}) that
$$
\aligned
|\langle \mathcal{L}^\lm (w), \widetilde{w} \rangle |
&\le C \big\{ \lm \|\Delta w\|_{L^2(\Omega)}
+ \|\nabla w\|_{L^2(\Omega) } \big\}
 \big\{ \lm \|\Delta \widetilde{w} \|_{L^2(\Omega)}
+ \|\nabla \widetilde{w} \|_{L^2(\Omega) } \big\}\\
& \le C \lm \big\{ \| u_0\|_{H^2(\Omega)} + \| G\|_{H^2(\Omega)} \big\}  \| v_0\|_{H^2(\Omega)}.
\endaligned
$$
Also, by (\ref{s-p-3}) and the fact that $\widetilde{\eta}_t =0$ in $\Omega_{2t}$,
$$
\aligned
|\langle \mathcal{L}^\lm (w), v_0 \widetilde{\eta}_t \rangle |
&\le  \lm^2 \| u_0\|_{H^2(\Omega)}  \| v_0 \widetilde{\eta}_t \|_{H^2(\Omega)}\\
& \le C \lm^2 t^{-1/2} \| u_0\|_{H^2(\Omega)} \| v_0\|_{H^2(\Omega)},
\endaligned
$$
where we have used Lemma \ref{lsmooth4} for the last inequality.
As a result, we have proved that
$$
\aligned
\Big|\int_\Omega
w\cdot \widetilde{F}  \, dx \Big|
 & \le C \lm \big\{ \| u_0\|_{H^2(\Omega)} + \| G\|_{H^2(\Omega)} \big\}  \| v_0\|_{H^2(\Omega)} \\
 & \le C \lm \big\{ \| u_0\|_{H^2(\Omega)} + \| G\|_{H^2(\Omega)} \big\}  \| \widetilde{F} \|_{L^2(\Omega)},
 \endaligned
 $$
 where, for the last step, we have used the $H^2$ estimate $\|v_0 \|_{H^2(\Omega)} \le C \| \widetilde{F} \|_{L^2(\Omega)}$,
 which holds under the assumption that $A$ is Lipschitz continuous and $\Omega$ is $C^{1, 1}$.
 The estimate (\ref{s-p-10})  now follows readily by duality.
\end{proof}

A proof for Theorem \ref{s-p-theorem}  in the case $d=2$ may be found in \cite{schuss1976}.
As pointed out by A. Friedman in \cite{friedman1968}, the one-dimensional  example,
$$
\left\{
\aligned
 & \lm^2 \frac{d^4u}{dx^4}
-\frac{d^2 u}{dx^2}=1 \quad \text{ in } (0, 1),\\
& u(0)=u(1)=u^\prime (0)=u^\prime (1)=0,
\endaligned
\right.
$$
shows that the $O(\lm)$ rate in  \eqref{s-p-10} is sharp.
However, in the case of periodic boundary conditions, the rates in Lemma \ref{s-p-lemma} and Theorem \ref{s-p-theorem} can be improved.

Let $C^\infty_\per  (\R^d; \R^d)$ denote the space of $C^\infty$, 1-periodic $\R^d$-valued  functions in $\R^d$. Let 
 $H^k_\per (Y; \R^d)$ be   the closure of $C^\infty_\per (\R^d; \R^d)$ in $H^k(Y; \R^d)$, where  $k\ge 1$ and  $Y=[0, 1]^d$. 
 Note that for any $F\in L^2(Y; \R^d)$ with $\int_Y F\, dx =0$, there exists a unique $u_\lm\in H^2_\per (Y; \R^d)$ such that
 $\mathcal{L}^\lm (u_\lm) =F$ in $Y$ and $\int_Y u_\lm \, dx =0$.

\begin{lemma}\label{s-pp-lemma}
Suppose $A$ satisfies conditions \eqref{econ}  and \eqref{pcon}.
Let $u_\lm\in H^2_\per  (Y; \R^d)$ be a weak solution of
$\mathcal{L}^\lm (u_\lm )=F$ in $Y$ with $\int_Y u_\lm\, dx =0$, where $F\in L^2(Y; \R^d)$ and $\int_Y F\, dx=0$.
Let $u_0\in H_\per^1(Y; \R^d)$ be the weak solution of $-\text{\rm div}(A\nabla u_0)=F$ in $Y$ 
with $\int_Y u_0\, dx=0$.
Suppose $u_0\in H_\per ^2(Y; \R^d)$. Then 
\begin{equation}\label{s-pp-1}
\| \nabla u_\lm -\nabla u_0\|_{L^2(Y)}
\le C \lm \| u_0\|_{H^2(Y)},
\end{equation}
where $C$ depends only on $d$, $\nu_1$ and $\nu_2$.
\end{lemma}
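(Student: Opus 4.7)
The plan is to exploit the absence of boundary layers in the periodic setting: unlike the Dirichlet case of Lemma \ref{s-p-lemma}, no cut-off function is needed, so one does not lose a factor of $\sqrt{\lm}$ to an $O(t)$ strip near $\partial\Omega$.

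Concretely, I would set $w = u_\lm - u_0$ directly. Since $u_\lm \in H^2_\per(Y;\R^d)$ and, by hypothesis, $u_0 \in H^2_\per(Y;\R^d)$, we have $w \in H^2_\per(Y;\R^d)$ with $\int_Y w = 0$. Using the two equations, $w$ satisfies in the weak sense
\begin{equation*}
\mathcal{L}^\lm(w) = \mathcal{L}^\lm(u_\lm) - \mathcal{L}^\lm(u_0) = F - \lm^2 \Delta^2 u_0 - F = -\lm^2 \Delta^2 u_0.
\end{equation*}
Testing this against $w$ itself, which is legal since $w \in H^2_\per$, and integrating by parts twice on the right-hand side (no boundary terms by periodicity), yields
\begin{equation*}
\lm^2 \int_Y \nabla^2 w \cdot \nabla^2 w \, dx + \int_Y A\nabla w \cdot \nabla w \, dx = -\lm^2 \int_Y \Delta u_0 \cdot \Delta w\, dx.
\end{equation*}

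I would then apply the elasticity condition (\ref{econ}) together with Korn's inequality for periodic zero-mean functions to get coercivity $\int_Y A\nabla w \cdot \nabla w \ge c\|\nabla w\|_{L^2(Y)}^2$. For the right-hand side, Cauchy--Schwarz and Young's inequality (with $\|\Delta w\|_{L^2} \le \|\nabla^2 w\|_{L^2}$) give
\begin{equation*}
\Bigl|\lm^2 \int_Y \Delta u_0 \cdot \Delta w \, dx\Bigr| \le \tfrac{1}{2}\lm^2 \|\nabla^2 w\|_{L^2(Y)}^2 + \tfrac{1}{2}\lm^2 \|\Delta u_0\|_{L^2(Y)}^2.
\end{equation*}
Absorbing the first term on the left produces $\|\nabla w\|_{L^2(Y)}^2 \le C\lm^2 \|u_0\|_{H^2(Y)}^2$, which is exactly (\ref{s-pp-1}).

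There is no serious obstacle here; the only point to verify carefully is the validity of Korn's inequality for the periodic zero-mean space $H^1_\per(Y;\R^d)/\R^d$, so that the elasticity tensor produces true coercivity on $\|\nabla w\|_{L^2}$ rather than only on the symmetric part. The conceptual content of the lemma is precisely that periodicity removes the boundary correction that forced the $O(\sqrt{\lm})$ rate in Lemma \ref{s-p-lemma}, and the calculation above makes the improvement to $O(\lm)$ transparent.
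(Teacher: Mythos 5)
Your proof is correct and is essentially the same as the paper's: set $w = u_\lm - u_0$, note $\mathcal{L}^\lm(w) = -\lm^2\Delta^2 u_0$, test against $w$, and absorb via Cauchy. The one point you flag — Korn on $H^1_\per(Y;\R^d)$ with zero mean — is fine (indeed, for periodic $w$ one has the identity $\int_Y |e(w)|^2 = \tfrac12\int_Y|\nabla w|^2 + \tfrac12\int_Y(\mathrm{div}\,w)^2$, which already gives the coercivity with a universal constant), and is the same ingredient the paper invokes implicitly through its Cauchy-inequality step.
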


\begin{proof}

Let $w=u_\lm -u_0$. Then
$$
\mathcal{L}^\lm (w)=-\lm^2 \Delta^2 u_0.
$$
It follows that for any $\psi \in H^2_\per(Y; \R^d)$,
\begin{equation}\label{s-pp-3}
|\langle \mathcal{L}^\lm (w), \psi \rangle |
\le \lm^2  \| \Delta u_0\|_{L^2(Y)} \|\Delta \psi\|_{L^2(Y)}.
\end{equation}
By taking $\psi =w$ in \eqref{s-pp-3}  and using the Cauchy inequality, we obtain 
\begin{equation}\label{s-pp-4}
\lm \|\Delta w \|_{L^2(Y)}
+ \| \nabla  w \|_{L^2(Y)}
\le C\lm  \| u_0\|_{H^2(Y)},
\end{equation}
which yields (\ref{s-pp-1}).
\end{proof}

\begin{theorem}\label{s-pp-theorem}
Suppose $A$ satisfies \eqref{econ} and \eqref{pcon}.
Also assume that $\|\nabla A\|_\infty\le L < \infty$.
Let $u_\lm$ and $u_0$ be the same as in Lemma \ref{s-pp-lemma}.
Then 
\begin{equation}\label{s-pp-10}
\| u_\lm -u_0\|_{L^2(Y)}
\le C \lm^2 \| F\|_{L^2(Y)},
\end{equation}
where $C$ depends on $d$, $\nu_1$, $\nu_2$, and $L$.
\end{theorem}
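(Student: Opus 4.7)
The plan is to run the same duality argument that gave Theorem~\ref{s-p-theorem}, but now exploit the fact that the periodic setting has no boundary, so all the boundary-layer terms involving the cut-off $\eta_t$ simply disappear. This is what promotes the rate from $O(\lm)$ to $O(\lm^2)$.

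\medskip

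Set $w=u_\lm-u_0\in H^2_{\per}(Y;\R^d)$. Exactly as in the proof of Lemma~\ref{s-pp-lemma},
$\mathcal{L}^\lm(w)=-\lm^2\Delta^2 u_0$ in the weak sense on the torus. For arbitrary
$\widetilde F\in L^2(Y;\R^d)$ with $\int_Y \widetilde F\,dx=0$, let $v_\lm\in H^2_{\per}(Y;\R^d)$ be the unique zero-mean solution of $\mathcal{L}^\lm(v_\lm)=\widetilde F$ in $Y$, and let $v_0\in H^2_{\per}(Y;\R^d)$ be the zero-mean solution of $-\text{\rm div}(A\nabla v_0)=\widetilde F$. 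Since the bilinear form in \eqref{bi} is symmetric (by the elasticity symmetry of $A$), using $v_\lm$ as test function for the equation for $w$ and integrating by parts on the torus gives
\begin{equation*}
\int_Y w\cdot \widetilde F\,dx=\langle \mathcal{L}^\lm(w),v_\lm\rangle=-\lm^2\int_Y \Delta u_0\cdot \Delta v_\lm\,dx.
\end{equation*}

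\medskip

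The key step is to bound $\|\Delta v_\lm\|_{L^2(Y)}$ \emph{uniformly in} $\lm$, since the naive energy estimate only gives $\|\Delta v_\lm\|_{L^2(Y)}\le C\lm^{-1}\|\widetilde F\|_{L^2(Y)}$, which would yield just $O(\lm)$. I would write $v_\lm=v_0+(v_\lm-v_0)$ and argue as follows. Because $A$ is Lipschitz and we work on the torus (no boundary), the standard $H^2$ regularity for second-order systems gives
$\|v_0\|_{H^2(Y)}\le C\|\widetilde F\|_{L^2(Y)}$. Applying Lemma~\ref{s-pp-lemma} to the pair $(v_\lm,v_0)$ and, more importantly, inspecting its proof — specifically the bound \eqref{s-pp-4} in the dual variables — yields $\lm\|\Delta(v_\lm-v_0)\|_{L^2(Y)}\le C\lm\|v_0\|_{H^2(Y)}$, hence $\|\Delta(v_\lm-v_0)\|_{L^2(Y)}\le C\|\widetilde F\|_{L^2(Y)}$. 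Combining the two,
\begin{equation*}
\|\Delta v_\lm\|_{L^2(Y)}\le \|\Delta v_0\|_{L^2(Y)}+\|\Delta(v_\lm-v_0)\|_{L^2(Y)}\le C\|\widetilde F\|_{L^2(Y)}.
\end{equation*}

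\medskip

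To finish, I would apply the same $H^2$ regularity to $u_0$ to obtain
$\|u_0\|_{H^2(Y)}\le C\|F\|_{L^2(Y)}$, and combine with the identity above and the uniform bound on $\|\Delta v_\lm\|$ to get
\begin{equation*}
\Big|\int_Y w\cdot \widetilde F\,dx\Big|\le \lm^2 \|\Delta u_0\|_{L^2(Y)}\|\Delta v_\lm\|_{L^2(Y)}\le C\lm^2\|F\|_{L^2(Y)}\|\widetilde F\|_{L^2(Y)}.
\end{equation*}
Since $w$ has mean zero, taking the supremum over all mean-zero $\widetilde F$ with $\|\widetilde F\|_{L^2(Y)}\le 1$ recovers $\|w\|_{L^2(Y)}$, and \eqref{s-pp-10} follows. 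The only real subtlety is the uniform $H^2$ control of $v_\lm$; the disappearance of the boundary cut-off (which was the source of the $t^{1/2}$ loss in the proof of Theorem~\ref{s-p-theorem}) is exactly what allows the extra factor of $\lm$ to be gained.
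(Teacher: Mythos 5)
Your proof is correct and uses essentially the same duality argument as the paper's. You streamline slightly by evaluating the pairing directly as $\langle\mathcal L^\lm(w),v_\lm\rangle=-\lm^2\int_Y\Delta u_0\cdot\Delta v_\lm\,dx$ and then proving a $\lm$-uniform bound on $\|\Delta v_\lm\|_{L^2(Y)}$ via the split $v_\lm=v_0+(v_\lm-v_0)$ together with \eqref{s-pp-4}, whereas the paper first decomposes $v_\lm=\widetilde w+v_0$ and estimates $\langle\mathcal L^\lm(w),\widetilde w\rangle$ and $\langle\mathcal L^\lm(w),v_0\rangle$ separately, invoking \eqref{s-pp-4} for both $w$ and $\widetilde w$; the two bookkeepings are equivalent, and your version uses \eqref{s-pp-4} only on the dual side.
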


\begin{proof}
The proof is similar to that of Theorem \ref{s-p-theorem}.
For $\widetilde{F}\in L^2(Y; \R^d)$ with $\int_Y \widetilde{F}\, dx =0$,
let $\widetilde{w}= v_\lm -v_0$, where $v_\lm\in H^2_\per (Y; \R^d)$ is the weak solution 
of $\mathcal{L}^\lm (v_\lm) =\widetilde{F} $ in $Y$ with $\int_Y v_\lm \, dx =0$, and 
$v_0\in H^1_{\per}(Y; \R^d)$ the solution of $-\text{\rm div}(A\nabla v_0)=\widetilde{F}$ in $Y$ with
$\int_Y v_0\, dx =0$.
Note that
$$
\aligned
\Big| \int_Y
w \cdot \widetilde{F} \, dx \Big|
&= |\langle \mathcal{L}^\lm (w), v_\lm \rangle |\\
&\le  |\langle \mathcal{L}^\lm (w) , \widetilde{w} \rangle |
+ |\langle \mathcal{L}^\lm (w) , v_0 \rangle |.
\endaligned
$$
It follows from (\ref{s-pp-3}) that
$$
\aligned
|\langle \mathcal{L}^\lm (w) , \widetilde{w} \rangle |
&\le \lm^2 \| \Delta w \|_{L^2(Y)} \|\Delta \widetilde{w} \|_{L^2(Y)}
+ C \| \nabla w\|_{L^2(Y)} \|\nabla \widetilde{w} \|_{L^2(Y)}\\
& \le C \lm^2 \| u_0\|_{H^2(Y)} \| v_0\|_{H^2(Y)}.
\endaligned
$$
By (\ref{s-pp-3}) we obtain 
$$
 |\langle \mathcal{L}^\lm (w) , v_0 \rangle |
\le \lm^2 \|\Delta u_0\|_{L^2(Y)} \|\Delta v_0\|_{L^2(Y)}.
$$
Since $\|\nabla A\|_\infty\le L<\infty$, the $H^2$ estimates,
$\|u_0\|_{H^2(Y)} \le C \| F\|_{L^2(Y)}$ and $\| v_0\|_{H^2(Y)} \le C \|\widetilde{F}\|_{L^2(Y)}$ hold.
As a result, we have proved that
$$
\Big| \int_Y
w \cdot \widetilde{F} \, dx \Big|
\le C \lm^2 \| F\|_{L^2(Y)} \|\widetilde{F} \|_{L^2(Y)},
$$
which, by duality, gives (\ref{s-pp-10}).
\end{proof}


\section{Qualitative homogenization}\label{q-homo}

The qualitative homogenization  for the elliptic system \eqref{eq1} was established in \cite{lions1978, francfort1994} for $\kappa=\va^{\gamma}$,
where  $0<\gamma< \infty$.
  Here we consider a general case $\kappa= \kappa (\varepsilon)  $ under the condition  \eqref{ratio}.
   Denoting $\kappa \va^{-1}$ as $\lm=\lm(\va)$, the system   \eqref{eq1} may be written as
  \begin{align}\label{llam}
 \lm^2 \va^2 \Delta^2u_{\va, \lm}   -\text{div} (A(x/\va)  \na u_{\va, \lm}   )= F.
\end{align}
We first  fix $0< \lm <\infty$ and investigate the homogenization of the system (\ref{llam}).

For $1\le \beta, j \le d$, 
let $P_j^\beta = y_j (0, \cdots, 1, \dots, 0)$ with $1$ in the $\beta^{th}$ position.
Consider the cell problem,
\begin{equation}\label{chilam}
\begin{cases}
   \lambda^2 \Delta^2 \chi_j^{\lambda, \beta}  -\text{div}\big[A(y) \na (P_j^\beta + \chi_j^{\lambda, \beta}  )\big]=  0
 ~~    \text{ in } \mathbb{R}^d,\\
\chi_j^{\lambda, \beta}  (y)\, \text{ is 1-periodic in } y, \\
  \int_Y  \chi_j^{\lambda, \beta}  (y)\, dy =0,
 \end{cases}
\end{equation}
where $Y=[0,1]^d$.
Under conditions \eqref{econ} and \eqref{pcon}, for each $\lm>0$,
 \eqref{chilam} admits a unique solution 
 $\chi_j^{\lambda, \beta} =(\chi_j^{\lambda,1  \beta},...,\chi_j^{\lambda, d\beta} )$ in $H^3_{\text{loc}} (\mathbb{R}^d;\R^d)$.
 This may be proved by using the Lax-Milgram Theorem on $H^2_{\text{per}} (Y; \R^d)$.
  Moreover, let $\chi^\lambda  =\big(\chi_j^{\lambda, \alpha \beta} \big)$, then 
\begin{align}\label{echilm}
\begin{split}
\| \chi^\lm\|_{H^1(Y)} & \leq C (1+\lm)^{-2}, \\
 \|\na^2\chi^\lm\|_{L^2(Y)} & \leq C \lm^{-1} (1+\lm)^{-1} ,\\
\| \na^3 \chi^\lm\|_{L^2(Y)} & \leq C\lm^{-2},
\end{split}
\end{align}
for some constant $C$ depending only on $d$, $\nu_1$ and $\nu_2$.
Estimates in (\ref{echilm}) follow  from energy estimates. 
Indeed, by using the test functions $\chi^\lambda$ and $\Delta \chi^\lambda $ and a Korn inequality, one obtains 
$$
\lambda \| \nabla^2 \chi^\lambda\|_{L^2(Y)} 
+\| \nabla \chi^\lambda \|_{L^2(Y)} \le C,
$$
and $\|\nabla^3 \chi^\lambda\|_{L^2(Y)} \le C\lambda^{-2} $.
The remaining estimates in (\ref{echilm}) follow readily  by Poincar\'e's inequality.
If $\lambda=0$, it is well known that (\ref{chilam}) has a unique solution in $H^1_{\text{loc}} (\mathbb{R}^d)$ and
$\|\chi^0\|_{H^1(Y)} \le C$.

Thanks to \cite{lions1978}, for each fixed $\lm\ge 0$,  the homogenized operator of $ \mathcal{L}^\lambda_\va$ in (\ref{lvalm})   is given by
\begin{align}\label{l0lm}
\mathcal{L}^\lambda_0= -\text{div} (\widehat{A^\lambda } \na ),
\end{align}
where
\begin{align}\label{alam}
\widehat{A^\lambda  } =\fint_Y [A (y)+A(y)\na\chi^\lambda (y)] \, dy.
\end{align}
In view of (\ref{echilm}), we have
  $ |\widehat{A^\lambda }| \leq C $, where  $C$ depends only on $d$, $\nu_1$ and $\nu_2$. 
  
  \begin{lemma}\label{ela-lemma}
  The constant matrix $\widehat{A^\lambda}$ satisfies the elasticity condition \eqref{econ}  with the same $\nu_1$ and $\nu_2$.
  \end{lemma}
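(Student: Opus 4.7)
The plan is to deduce both the symmetries and the two-sided bounds for $\widehat{A^\lambda}$ from the variational structure of the cell problem \eqref{chilam}. For any constant matrix $\xi=(\xi_j^\beta)$, linearity of \eqref{chilam} produces the corrector $\chi^\lambda_\xi = \xi_j^\beta \chi_j^{\lambda,\beta} \in H^2_{\per}(Y;\R^d)$, which is the unique zero-mean minimizer of the strictly convex functional
\[
I_\xi(\psi) = \lambda^2 \int_Y |\nabla^2\psi|^2\, dy + \int_Y A(y)(\xi+\nabla\psi):(\xi+\nabla\psi)\, dy.
\]
Testing \eqref{chilam} against $\chi^\lambda_\xi$ and comparing with \eqref{alam} yields the identity $\widehat{A^\lambda}\xi:\xi = I_\xi(\chi^\lambda_\xi) = \min_\psi I_\xi(\psi)$.

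From this variational characterization the two-sided bound is immediate. For the upper bound I would use $\psi=0$ as a competitor, obtaining $\widehat{A^\lambda}\xi:\xi \le \int_Y A(y)\xi:\xi\, dy \le \nu_2|\xi|^2$ whenever $\xi$ is symmetric. For the lower bound I would drop the nonnegative biharmonic contribution and apply \eqref{econ} pointwise, noting that $A$ sees only the symmetric part of its argument:
\[
\widehat{A^\lambda}\xi:\xi \ge \nu_1 \int_Y \bigl|\bigl(\xi+\nabla\chi^\lambda_\xi\bigr)^{\mathrm{sym}}\bigr|^2\, dy \ge \nu_1|\xi|^2,
\]
where the second inequality is Jensen's, using that $\xi$ is symmetric and that $\nabla\chi^\lambda_\xi$ has vanishing mean by periodicity.

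For the symmetries, the minor one $\widehat{A^\lambda}{}_{ij}^{\alpha\beta} = \widehat{A^\lambda}{}_{\alpha j}^{i\beta}$ is almost immediate from \eqref{alam}, since the free indices $(i,\alpha)$ enter only through the factor $a_{ik}^{\alpha\gamma}$, to which the symmetry $a_{ik}^{\alpha\gamma}=a_{\alpha k}^{i\gamma}$ applies without involving the corrector. For the major symmetry $\widehat{A^\lambda}{}_{ij}^{\alpha\beta}=\widehat{A^\lambda}{}_{ji}^{\beta\alpha}$, I would test \eqref{chilam} for $\chi_i^{\lambda,\alpha}$ against $\chi_j^{\lambda,\beta}$ to recast
\[
\widehat{A^\lambda}{}_{ij}^{\alpha\beta} = \fint_Y A(y)\nabla\bigl(P_j^\beta+\chi_j^{\lambda,\beta}\bigr) : \nabla\bigl(P_i^\alpha+\chi_i^{\lambda,\alpha}\bigr)\, dy + \lambda^2 \fint_Y \nabla^2\chi_j^{\lambda,\beta} : \nabla^2\chi_i^{\lambda,\alpha}\, dy,
\]
which is manifestly invariant under the swap $(i,\alpha)\leftrightarrow(j,\beta)$ thanks to $a_{ij}^{\alpha\beta}=a_{ji}^{\beta\alpha}$.

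The step I expect to need the most care is the bookkeeping in the index notation, in particular matching the definition of $\widehat{A^\lambda}$ in \eqref{alam} with the variational form of $\widehat{A^\lambda}\xi:\xi$ for a general (not tensor-product) symmetric $\xi$. The biharmonic term plays no obstructive role, as it contributes nonnegatively, depends on $\psi$ but not on $\xi$, and thus vanishes both from the competitor $\psi=0$ and from the pointwise elasticity estimate.
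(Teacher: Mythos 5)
Your proof is correct and follows essentially the same route as the paper's: both recast $\widehat{A^\lambda}\xi:\xi$ as the symmetric bilinear form $a_{\mathrm{per}}(\phi+\psi,\phi+\psi)$ with $\phi$ linear and $\psi$ the corrector (your minimization principle is simply the variational re-packaging of the paper's identity $a_{\mathrm{per}}(\phi+\psi,\phi-\psi)=a_{\mathrm{per}}(\phi,\phi)-a_{\mathrm{per}}(\psi,\psi)$), the lower bound drops the biharmonic term and uses the pointwise elasticity estimate together with $\fint_Y\nabla\psi=0$, and the symmetries are read off the bilinear form. Your observation that the minor symmetry already follows directly from \eqref{alam} is a correct small refinement, and your use of $\int_Y|\nabla^2\psi|^2$ in place of $\int_Y|\Delta\psi|^2$ is harmless since the two agree on $H^2_{\per}(Y)$ by periodic integration by parts.
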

  
  \begin{proof}
  Let $\widehat{A^\lambda} = \big( \widehat{A^\lambda}_{ij}^{\alpha\beta} \big)$ with $1\le \alpha, \beta, i, j \le d$. Note that 
  $$
  \aligned
  \widehat{A^\lambda}_{ij}^{\alpha\beta}
   & =\fint_Y
  A_{ik}^{\alpha \gamma} \frac{\partial}{\partial y_k}
  \left[ P_j^{\gamma \beta} + \chi_j^{\lambda, \gamma \beta} \right]\, dy\\
  &=\fint_Y 
  A_{\ell k}^{t\gamma}
  \frac{\partial}{\partial y_k}
  \left[ P_j^{\gamma \beta} +\chi_j^{\lambda, \gamma \beta} \right]
  \cdot
  \frac{\partial}{\partial y_\ell}
  \left[ P_i^{t \alpha} + \chi_i^{\lambda, t \alpha} \right]\, dy
  +\lambda^2
  \fint_Y \Delta \chi_j^{\lambda, \beta} \cdot \Delta \chi_i^{\lambda, \alpha} \, dy,
  \endaligned
  $$
  where $P_j^{\gamma\beta} =y_j \delta^{\gamma \beta}$ and
  we have used (\ref{chilam}) for the last step.
  It follows that $\widehat{A^\lambda}$ satisfies the symmetry conditions in (\ref{econ}).
  To prove the ellipticity condition in  (\ref{econ}),  
  we introduce  the bilinear form,
  $$
  a_{\text{per}} (\phi, \psi)
  =\fint_Y A\nabla \phi \cdot \nabla \psi \, dy +\lambda^2 \fint_Y \Delta \phi \cdot \Delta \psi\, dy,
  $$
  which is symmetric and nonnegative. It is known  that the elasticity condition (\ref{econ}) implies 
  \begin{equation}\label{elas-2}
  \frac{\nu_1}{4}
  |\zeta +\zeta^T|^2
  \le A\zeta \cdot \zeta^T \le \frac{\nu_2}{4} 
  |\zeta +\zeta^T|^2
  \end{equation}
  for any  matrix $\zeta \in \mathbb{R}^{d\times d}$,
  where $\zeta^T$ denotes the transpose of $\zeta$.
  Let $\xi=(\xi_j^\beta)\in \mathbb{R}^{d\times d}$ be a symmetric matrix.
  Let 
$\phi= \xi_j^\beta P_j^\beta $ and $\psi =\xi_j^\beta \chi_j^{\lambda, \beta}$.
Then
$$
\aligned
\widehat{A^\lambda}_{ij}^{\alpha\beta} \xi_i^\alpha\xi_j^\beta
& =a_{\text{per}} (\phi +\psi, \phi +\psi)\\
&\ge \fint_Y A\nabla (\phi +\psi)\cdot \nabla (\phi +\psi)\, dy\\
&\ge \frac{\nu_1}{4}
\fint_Y  |\nabla \phi + \nabla \psi 
+ (\nabla \phi)^T 
+ (\nabla \psi)^T|^2\, dy\\
&=\frac{\nu_1}{4}
\fint_Y
|\nabla \phi + (\nabla \phi)^T|^2\, dy
+\frac{\nu_1}{4}
\fint_Y |\nabla \psi + (\nabla \psi)^T|^2\, dy\\
 & \ge  \nu_1 |\xi|^2,
\endaligned
$$
where we have used (\ref{elas-2}) and  the fact $\int_Y\nabla  \chi^\lambda\, dy=0$.
Also, note that
$$
\aligned
\widehat{A^\lambda}_{ij}^{\alpha\beta} \xi_i^\alpha\xi_j^\beta
& = a_{\text{per}} (\phi +\psi, \phi -\psi)\\
&= a_{\text{per}} (\phi, \phi) -a_{\text{per}} (\psi, \psi)
\le a_{\text{per}} (\phi, \phi) 
 \le \nu_2 |\xi|^2,
\endaligned
$$
where we have used (\ref{elas-2}) for the last inequality.
  \end{proof}
 
 Define
\begin{equation}\label{A-bar}
\overline{A} =\fint_Y  A(y)\, dy.
\end{equation}

 \begin{lemma}\label{ith21}
 Assume  $ A$ satisfies \eqref{econ} and \eqref{pcon}. 
  Let $\widehat{A^\lambda}$ be defined by \eqref{alam}.
  Then \begin{align}
 \big|\widehat{A^\lambda }- \overline{A}  \big| 
  & \leq C \lm^{-2}  \quad\text{ for } 1\le \lm<\infty,\label{reith21}\\
 \big|\widehat{A^{\lambda_1} }-\widehat{A^{\lambda_2} }  \big| & \leq C | 1- (\lambda_1/\lambda_2)^2 | \quad  \text{ for  }
 0< \lambda_1, \lambda_2<\infty, \label{reith23} \\
  \big|\widehat{A^\lambda }-\widehat{A^0} \big|
   & \leq
   \widetilde{C} \lm ^2   \quad\text{ for  }  0< \lambda\leq 1,  \text{ if in addition } \|\na A\|_{\infty}\le L, \label{reith212}
 \end{align}
 where $C$ depends only on $d$, $\nu_1$, $\nu_2$, and $\widetilde{C}$ depends on $d$, $\nu_1$, $\nu_2$ and $L$.
 \end{lemma}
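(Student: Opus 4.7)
The strategy is to prove all three bounds via energy estimates for the cell problem \eqref{chilam} applied to well-chosen differences, together with the representation
\[
\widehat{A^\lambda} - \overline{A} = \fint_Y A(y)\,\nabla \chi^\lambda(y)\, dy,
\]
which is immediate from \eqref{alam}. For a symmetric matrix $\xi \in \R^{d\times d}$, I work throughout with the contracted corrector $v^\mu := \chi^\mu \xi$, which satisfies $\mu^2 \Delta^2 v^\mu - \text{div}(A\nabla v^\mu) = \text{div}(A\xi)$ in $Y$ and has zero mean. The basic coercivity $\int_Y A\nabla v \cdot \nabla v \ge c\|\nabla v\|_{L^2(Y)}^2$ on periodic mean-zero $v$, which combines \eqref{econ} with the second Korn inequality, is used repeatedly.

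For \eqref{reith21}, rescale $v^\lambda = \lambda^{-2} w^\lambda$, so the cell equation reads $\Delta^2 w^\lambda - \lambda^{-2} \text{div}(A\nabla w^\lambda) = \text{div}(A\xi)$. Testing with $w^\lambda$, dropping the nonnegative lower-order term, and invoking the Poincaré-type inequality $\|\nabla w\|_{L^2} \le C\|\Delta w\|_{L^2}$ (for periodic mean-zero $w$, by Fourier series) give $\|w^\lambda\|_{H^2(Y)} \le C|\xi|$ uniformly in $\lambda \ge 1$; undoing the rescaling yields $\|\nabla \chi^\lambda \xi\|_{L^2} \le C\lambda^{-2}|\xi|$, whence \eqref{reith21}. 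For \eqref{reith23}, rewrite the cell equation for $v_2 := \chi^{\lambda_2}\xi$ as $\lambda_2^2\Delta^2 v_2 = \text{div}(A(\xi+\nabla v_2))$ and subtract it from the one for $v_1 := \chi^{\lambda_1}\xi$: the difference $w = v_1 - v_2$ satisfies
\[
\lambda_1^2 \Delta^2 w - \text{div}(A\nabla w) = \tau\, \text{div}(A(\xi + \nabla v_2)), \qquad \tau := 1 - (\lambda_1/\lambda_2)^2.
\]
Testing with $w$, using the uniform bound $\|\nabla v_2\|_{L^2} \le C|\xi|$, and coercivity yield $\|\nabla w\|_{L^2} \le C|\tau||\xi|$, which implies \eqref{reith23}.

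The delicate estimate is \eqref{reith212}: a direct energy bound on $w = v^\lambda - v^0$ — testing the equation $\text{div}(A\nabla w) = \lambda^2 \Delta^2 v^\lambda$ against $w$ — only gives $\|\nabla w\|_{L^2} \le C\lambda|\xi|$, one power short of the sought $\lambda^2$. To capture the missing cancellation I use the variational characterization
\[
\widehat{A^\mu}\xi\cdot\xi = \fint_Y A(\xi+\nabla v^\mu)\cdot(\xi+\nabla v^\mu)\,dy + \mu^2 \fint_Y |\Delta v^\mu|^2\, dy,
\]
valid for $\mu = \lambda$ and $\mu = 0$ (it follows from testing the cell equation with $v^\mu$ itself). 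Expanding $\nabla v^\lambda = \nabla v^0 + \nabla w$ and using the orthogonality $\fint_Y A(\xi + \nabla v^0)\cdot \nabla w\, dy = 0$ — obtained by testing the $v^0$ cell equation against $w$ — kills the $O(\lambda)$ cross term and leaves
\[
\widehat{A^\lambda}\xi\cdot\xi - \widehat{A^0}\xi\cdot\xi = \fint_Y A\nabla w \cdot \nabla w\, dy + \lambda^2 \fint_Y |\Delta v^\lambda|^2\, dy \ge 0.
\]
Subtracting the two cell equations gives $-\text{div}(A\nabla w) = -\lambda^2 \Delta^2 v^\lambda$; testing against $w$ and integrating by parts twice produces $\fint A\nabla w\cdot \nabla w + \lambda^2 \|\Delta v^\lambda\|_{L^2}^2 = \lambda^2 \fint \Delta v^\lambda \cdot \Delta v^0$, and Young's inequality bounds both terms by $C\lambda^2 \|\Delta v^0\|_{L^2}^2$. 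Here the Lipschitz hypothesis $\|\nabla A\|_\infty \le L$ enters: classical $H^2$-regularity for the periodic second-order cell problem gives $\|\Delta v^0\|_{L^2(Y)} \le C(1+L)|\xi|$. Combining, $0 \le \widehat{A^\lambda}\xi\cdot\xi - \widehat{A^0}\xi\cdot\xi \le C\lambda^2 |\xi|^2$, and polarization — legitimate because both tensors satisfy the elasticity symmetries of Lemma \ref{ela-lemma} — promotes this quadratic-form bound to the matrix bound \eqref{reith212}. The main obstacle throughout is identifying and exploiting precisely this quadratic cancellation that upgrades the naive $O(\lambda)$ estimate to the sharp $O(\lambda^2)$ one.
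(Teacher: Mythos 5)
Your proposal is correct, and for \eqref{reith21} and \eqref{reith23} it is essentially the paper's own energy argument; the only cosmetic difference is that for \eqref{reith23} you substitute the $\lambda_2$-cell equation to turn the source $(\lambda_2^2-\lambda_1^2)\Delta^2\chi^{\lambda_2}$ into a divergence-form term, whereas the paper keeps it as is and invokes the $H^3$ corrector bound $\|\nabla^3\chi^{\lambda_2}\|_{L^2(Y)}\le C\lambda_2^{-2}$ from \eqref{echilm}; both are valid one-line energy estimates.

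For \eqref{reith212}, however, your route is genuinely different from the paper's. The paper writes $\widehat{A^\lambda}-\widehat{A^0}=\fint_Y A(\nabla\chi^\lambda-\nabla\chi^0)$, integrates by parts against $\nabla A$ to get the bound $C\|\nabla A\|_\infty\|\chi^\lambda-\chi^0\|_{L^2(Y)}$, and then invokes the duality estimate of Theorem~\ref{s-pp-theorem} to obtain $\|\chi^\lambda-\chi^0\|_{L^2(Y)}\le C\lambda^2$. You instead work directly with the quadratic form via the variational identity
$\widehat{A^\mu}\xi\cdot\xi=\fint_Y A(\xi+\nabla v^\mu)\cdot(\xi+\nabla v^\mu)+\mu^2\fint_Y|\Delta v^\mu|^2$
(which is precisely what the proof of Lemma~\ref{ela-lemma} establishes), exploit the Galerkin orthogonality $\fint_Y A(\xi+\nabla v^0)\cdot\nabla w=0$ to kill the $O(\lambda)$ cross term, and close with a primal energy estimate on $w=v^\lambda-v^0$; polarization is then legitimate precisely because Lemma~\ref{ela-lemma} gives both tensors the elasticity symmetries, so the nonnegative form $(\widehat{A^\lambda}-\widehat{A^0})\xi\cdot\xi$ determines the tensor difference. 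Both routes consume the Lipschitz hypothesis $\|\nabla A\|_\infty\le L$ in exactly one place, namely the $H^2$ regularity of the scalar-order periodic cell problem, but yours is self-contained and arguably more elementary since it bypasses both the integration by parts and the dual problem of Theorem~\ref{s-pp-theorem}. One caveat worth flagging: your nonnegativity $\fint_Y A\nabla w\cdot\nabla w\ge 0$ relies on \eqref{elas-2} (the elasticity inequality for non-symmetric gradients), which the paper states without proof; you should cite it explicitly rather than the Korn inequality, which is not quite the mechanism here.
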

 
\begin{proof}

By  the definitions of $\widehat{A^\lm}$ and $\overline{A}$,
\begin{align*}
\big|\widehat{A^\lambda } -\overline{A}
    \big|=\big|\fint_Y A(y) \na \chi^\lm (y) dy\big|\leq C\|\na  \chi^\lm\|_{L^2(Y)},
\end{align*}
which, together with \eqref{echilm}, gives \eqref{reith21}. Similarly, by the definition of $\widehat{A^\lambda}$ ,
\begin{align}\label{alm-wa}
\big|\widehat{A^{\lambda_1}  }-\widehat{A^{\lambda_2} }  \big|\leq C \|\na \chi^{\lambda_1} -\na \chi^{\lm_2} \|_{L^2(Y)}.
\end{align}
Since
\begin{align*}
-\text{div}\big(A(y) \na (\chi^{\lambda_1} -\chi^{\lambda_2} )\big)+
\lm_1^2\Delta^2 (\chi^{\lambda_1} -\chi^{\lambda_2} ) = (\lambda_2^2-\lm_1^2)\Delta^2 \chi^{\lm_2},
\end{align*}
by energy estimates and the $H^3$ estimate for $\chi^\lm$ in  \eqref{echilm},
\begin{align*}
\aligned
\|\na \chi^{\lambda_1} -\na \chi^{\lm_2} \|_{L^2(Y)} & \leq
C |\lambda_2^2 -\lambda_1^2| \| \nabla^3 \chi^{\lambda_2}\|_{L^2(Y)}\\
&\le C | 1- (\lambda_1/\lambda_2)^2|,
\endaligned
\end{align*}
which, combined with \eqref{alm-wa}, gives \eqref{reith23}.

We now turn to \eqref{reith212}. Note that
\begin{align}\label{pith212}
\big|\widehat{A^\lambda } - \widehat{A^0}\big| =\big|\fint_Y A (\na \chi^\lm -\na \chi^0 )  dy\big|
    \leq C \|\nabla A\|_\infty 
      \| \chi^\lm-\chi^0\|_{L^2(Y)},
\end{align}
where we have used the integration by parts for the last inequality.
It follows by Theorem \ref{s-pp-theorem} that 
\begin{align}\label{pith213}
 \| \chi^\lm  -\chi^0\|_{L^2(Y)}\leq C \lambda^2,
\end{align}
 where $C$ depends only on $d$, $\nu_1$, $\nu_2$ and $L$.
  This, combined with \eqref{pith212}, gives \eqref{reith212}.
\end{proof}

 Define $\mathcal{L}_0 =-\text{\rm div} (\widehat{A}\nabla)$, where 
 \begin{equation}\label{ahat}
 \widehat{A} =
 \left\{
 \aligned
& \overline{A}=\fint_Y A (y)\, dy & \quad & \text{ if }  \rho=\infty,\\
& \widehat{A^\rho} & \quad & \text{ if } 0\le \rho <\infty,
 \endaligned
 \right.
\end{equation}
where $\widehat{A^\rho}$ is given by (\ref{alam}).

\begin{lemma}\label{con-lemma}
Suppose that $\lambda \to \rho$.
Then $\widehat{A^\lambda} \to \widehat{A}$.
\end{lemma}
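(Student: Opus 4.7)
The plan is to split on the value of $\rho$ and apply the three quantitative estimates already proved in Lemma \ref{ith21}, each of which quantifies the relevant limit.

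\medskip

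\noindent \textbf{Case 1 ($\rho=\infty$).} By \eqref{ahat} we have $\widehat{A}=\overline{A}$, so I would invoke \eqref{reith21}: for $\lm\ge 1$, $|\widehat{A^\lm}-\overline{A}|\le C\lm^{-2}$. Since $\lm\to\infty$, this gives $\widehat{A^\lm}\to\widehat{A}$.

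\medskip

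\noindent \textbf{Case 2 ($0<\rho<\infty$).} Here $\widehat{A}=\widehat{A^\rho}$. I would apply \eqref{reith23} with $\lm_1=\lm$ and $\lm_2=\rho$: $|\widehat{A^\lm}-\widehat{A^\rho}|\le C|1-(\lm/\rho)^2|$, and the right-hand side tends to $0$ because $\lm\to \rho>0$. The bound is uniform in $\lm$ once $\lm$ is close to $\rho$, so the conclusion is immediate.

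\medskip

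\noindent \textbf{Case 3 ($\rho=0$).} Here $\widehat{A}=\widehat{A^0}$. If one is allowed to assume that $A$ is Lipschitz, then \eqref{reith212} gives $|\widehat{A^\lm}-\widehat{A^0}|\le \widetilde{C}\lm^2\to 0$ and we are done. Without the Lipschitz hypothesis, I would replace this by a weak compactness argument: the uniform bound $\|\na \chi^\lm\|_{L^2(Y)}\le C$ from \eqref{echilm} (which holds under \eqref{econ}--\eqref{pcon} alone) allows one to extract, along any sequence $\lm_k\to 0$, a subsequence with $\chi^{\lm_k}\rightharpoonup \chi^*$ weakly in $H^1_\per(Y;\R^d)$. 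To identify $\chi^*$, I test the equation \eqref{chilam} for $\chi^{\lm_k}$ against $\phi\in H^2_\per(Y;\R^d)$:
\begin{equation*}
\lm_k^2 \fint_Y \Delta \chi^{\lm_k}\cdot \Delta\phi\,dy + \fint_Y A\na (P_j^\be+\chi^{\lm_k})\cdot \na\phi\,dy=0.
\end{equation*}
Because $\lm_k\|\Delta \chi^{\lm_k}\|_{L^2(Y)}\le C$ by \eqref{echilm}, the first term is bounded by $C\lm_k\to 0$, while the weak convergence of $\na \chi^{\lm_k}$ in $L^2(Y)$ handles the second. Hence $\chi^*$ satisfies the second-order corrector equation, and density of $H^2_\per$ in $H^1_\per$ extends the identity to all test functions in $H^1_\per$. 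By uniqueness of the solution to the $\lm=0$ cell problem, $\chi^*=\chi^0$; as the limit is unique, the whole family $\na\chi^\lm$ converges weakly to $\na\chi^0$ in $L^2(Y)$. Testing this weak convergence against $A\in L^\infty\subset L^2$ in the definition \eqref{alam} yields $\widehat{A^\lm}\to\widehat{A^0}$.

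\medskip

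The only step that takes more than a line is the $\rho=0$ case when no smoothness of $A$ is available; there the compactness and uniqueness-identification step is the main (but mild) obstacle. In the presence of the Lipschitz condition, everything reduces to direct application of Lemma \ref{ith21}.
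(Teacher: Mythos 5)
Your proposal is correct, and the first two cases ($\rho=\infty$ and $0<\rho<\infty$) coincide exactly with the paper's one-line invocation of Lemma \ref{ith21}. The interesting divergence is in the case $\rho=0$ without the Lipschitz hypothesis. The paper removes the smoothness assumption by a quantitative approximation argument: it approximates $A$ in $L^q(Y)$ by a smooth matrix $B$ satisfying the same structural conditions, uses the Meyers/reverse H\"older estimate of Theorem \ref{RH-thm} to show that the map $A\mapsto \widehat{A^\lambda}$ is uniformly Lipschitz in $\lambda$ with respect to the $L^q$ norm of the coefficients (i.e.\ $|\widehat{A^\lambda}-\widehat{B^\lambda}|\le C\|A-B\|_{L^q(Y)}$ with $C$ independent of $\lambda$), and then applies the quantitative bound \eqref{reith212} to the smooth $B$. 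You instead argue by weak compactness: the uniform $H^1$ bound on $\chi^\lambda$ from \eqref{echilm} lets you extract a weak $H^1_{\rm per}$ limit along any sequence $\lambda_k\to 0$; the bound $\lm\|\Delta\chi^\lambda\|_{L^2}\le C$ kills the fourth-order term when testing \eqref{chilam} against $H^2_{\rm per}$ functions; density of $H^2_{\rm per}$ in $H^1_{\rm per}$ together with $H^1$-continuity of the limiting bilinear form extends the identity; the normalization $\fint_Y\chi^*=0$ is preserved (strong $L^2$ convergence by Rellich), so uniqueness of the $\lambda=0$ cell problem identifies $\chi^*=\chi^0$; and the standard subsequence argument upgrades this to convergence of the whole family, after which $\widehat{A^\lambda}\to\widehat{A^0}$ follows by testing weak $L^2$ convergence of $\nabla\chi^\lambda$ against $A\in L^\infty(Y)\subset L^2(Y)$. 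Both routes are sound. Yours is softer and more self-contained, needing neither \eqref{reith212} nor the Meyers estimate; the paper's is more quantitative and reuses machinery that is in any case required elsewhere. Since the lemma asserts only a qualitative limit, your argument is fully adequate.
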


\begin{proof}
In view of Lemma \ref{ith21}, this is obvious if $0< \rho \le \infty$.
In the case $\rho=0$, where $\widehat{A}=\widehat{A^0}$,
 the estimate (\ref{reith212}) requires  that $A$ is  Lipschitz continuous.
The condition may be removed by an approximation argument.
Indeed, let $B$ be a smooth matrix satisfying \eqref{econ}-\eqref{pcon}.
Then
\begin{align} \label{pith214}
 \big|\widehat{A^\lambda } -
  \widehat{A^0} |
  \leq
   \big|\widehat{A^\lambda } -
   \widehat{B^\lambda }\big|
   +\big|\widehat{B^\lambda } -
    \widehat {B^0}
    \big|
    +\big| \widehat{B^0}-
   \widehat{A^0} \big|.
\end{align}
Let $\tau^\lambda$ be the weak solution of the cell problem \eqref{chilam} with $A$ being replaced by $B$. Then
\begin{align*}
\lm^2 \De^2(\chi^\lm- \tau^\lambda)-\text{div} ( A(y)\na ( \chi^\lm-{\tau^\lambda}) )= \text{div} \big(( A -B ) \na (y +\tau^\lambda)\big).
\end{align*}
By the reverse H\"older estimate (\ref{RH-1}), there exist some $p>2$ and $C>0$,
 depending only on $d$, $\nu_1$ and $\nu_2$, such that $\|\nabla \tau^\lambda\|_{L^p(Y)}\le C$.
By energy estimates,
\begin{align*}
\|\na (\chi^\lm- \tau^\lambda )\|_{L^2(Y)}&\leq C \|A-B\|_{L^2(Y)} +C\Big(\fint_Y |A-B|^2 |\na \tau^\lambda|^2dy\Big)^{1/2}\nonumber\\
&\leq C \|A-B\|_{L^2(Y)} + C \|A-B\|_{L^{q}(Y)} \|\na \tau^\lambda\|_{L^p(Y)}\nonumber\\
&\leq C  \|A-B\|_{L^{q}(Y)},
\end{align*}
where $q=2p/(p-2)$.
By the definitions of $\widehat{A^\lambda }$ and $\widehat{B^\lambda }$, we  obtain that
\begin{align}\label{pith215}
\big|\widehat{A^\lambda }-\widehat{B^\lambda }\big|\leq \|A-B\|_{L^2(Y)}+ \|\na (\chi^\lm-\tau^\lambda) \|_{L^2(Y)}\leq C   \|A-B\|_{L^{q}(Y)}.
\end{align}
Similarly,  one can prove that
\begin{align*}
\big| \widehat{A^0} - \widehat{B^0}\big|
\leq \|A-B\|_{L^2(Y)}+ \|\na (\chi^0-\tau^0)\|_{L^2(Y)}\leq C   \|A-B\|_{L^{q}(Y)},
\end{align*}
which, combined with \eqref{pith214}, \eqref{pith215} and \eqref{reith212} for $B$, gives
 \begin{align*}
 \big|\widehat{A^\lambda} -
   \widehat{A^0} \big|\leq C  \|A-B\|_{L^{q}(Y)}+C_B  \lm^2,
 \end{align*}
 where $C_B$ depends on $\|\nabla B\|_\infty$. 
 By approximating $A$ in $L^q(Y) $ with a sequence of smooth matrix satisfying \eqref{econ} and \eqref{pcon},
 we obtain $\widehat{A^\lambda} \to \widehat{A^0}$ as $\lambda\to 0$.
 \end{proof}
 
The following theorem shows that the effective  equation   for   \eqref{eq1} is given by $\mathcal{L}_0 (u_0)=F$.

\begin{theorem}
Suppose that $A$ satisfies \eqref{econ}-\eqref{pcon} and $\kappa$ satisfies \eqref{ratio}. 
Let $F\in H^{-1}(\Omega; \R^d)$ and $G\in H^2(\Omega; \R^d)$, where $\Omega$ is a bounded Lipschitz domain in $\R^d$.
Let $u_\va\in H^2(\Omega; \R^d) $ be the weak solution of  \eqref{eq1} such that $u_\va-G\in H_0^2(\Omega;\R^d)$.
Let $u_0\in H^1(\Omega; \R^d) $ be the weak solution
of $-\text{\rm div}(\widehat{A}\nabla u_0)=F$ in $\Omega$ with $u_0-G \in H_0^1(\Omega; \R^d)$, where  $\widehat{A}$ is given by \eqref{ahat}. 
Then as $\va\rightarrow 0$, $u_\va \to u_0$ weakly in $H^1(\Omega; \R^d)$, and
$A(x/\va)\nabla u_\va \to \widehat{A}\nabla u_0$ weakly in $L^2(\Omega; \R^{d\times d} )$.
\end{theorem}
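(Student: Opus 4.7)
\medskip

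\noindent\textbf{Proof proposal.}
The plan is to run the classical Tartar oscillating–test–function argument, with the twist that the natural oscillator has to be the corrector $\chi^{\lambda_\va}$ for the operator at the actual scale $\lambda_\va=\kappa/\va$, not a fixed corrector. Throughout, write $\lm_\va =\kappa/\va$, so $\lm_\va\to \rho$ by \eqref{ratio}.

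First, by the energy estimate (the analogue of \eqref{energy-0} applied to $\mathcal{L}_\va=\mathcal{L}_\va^{\lm_\va}$, obtained by testing with $u_\va-G$) we get
\[
\kappa\|\nabla^2 u_\va\|_{L^2(\Omega)}+\|\nabla u_\va\|_{L^2(\Omega)}\le C\big\{\|F\|_{H^{-1}}+\|G\|_{H^2}\big\},
\]
uniformly in $\va$. Extract a subsequence (still labelled $u_\va$) so that $u_\va\rightharpoonup u^*$ weakly in $H^1(\Omega;\R^d)$ (and strongly in $L^2$ by Rellich), with $u^*-G\in H^1_0(\Omega;\R^d)$, and $A(x/\va)\nabla u_\va\rightharpoonup \xi^*$ weakly in $L^2(\Omega;\R^{d\times d})$. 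For any $\psi\in C_c^\infty(\Omega;\R^d)$, the weak formulation gives
$\kappa^2\!\int \Delta u_\va\cdot\Delta\psi+\int A(x/\va)\nabla u_\va\cdot\nabla\psi=\int F\cdot\psi$; the first integral is bounded by $\kappa\cdot\|\kappa\Delta u_\va\|_2\|\Delta\psi\|_2\to 0$, so in the limit $-\mathrm{div}\,\xi^*=F$ in $\mathcal{D}'(\Omega)$. It remains to show $\xi^*=\widehat{A}\nabla u^*$; then uniqueness for the homogenized problem forces $u^*=u_0$ and the whole sequence converges.

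For the identification step, fix $1\le k,\gamma\le d$ and use the oscillating test function
\[
\phi_\va^{k\gamma}(x)=P_k^\gamma(x)+\va\,\chi_k^{\lm_\va,\gamma}(x/\va),
\]
which satisfies $\mathcal{L}_\va\phi_\va^{k\gamma}=0$ by construction. Test $\mathcal{L}_\va u_\va=F$ against $\phi_\va^{k\gamma}\psi$ and $\mathcal{L}_\va\phi_\va^{k\gamma}=0$ against $u_\va\psi$ for $\psi\in C_c^\infty(\Omega)$, and subtract. The second–order bulk terms cancel by the elasticity symmetry $A_{ij}^{\alpha\beta}=A_{ji}^{\beta\alpha}$, leaving a difference of ``boundary'' terms
\[
\int A(x/\va)\nabla u_\va\cdot \phi_\va^{k\gamma}\otimes\nabla\psi-\int A(x/\va)\nabla\phi_\va^{k\gamma}\cdot u_\va\otimes\nabla\psi,
\]
plus $\kappa^2$–weighted fourth–order terms. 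The fourth–order terms are controlled using the gradient bounds $\|\nabla u_\va\|_2,\kappa\|\nabla^2 u_\va\|_2\le C$ together with \eqref{echilm} applied to $\chi^{\lm_\va}$; a short calculation with $\lm_\va=\kappa/\va$ shows each such term is $O(\min(\kappa,\va))$. Next, $\phi_\va^{k\gamma}\to P_k^\gamma$ strongly in $L^2$ (since $\|\va\chi^{\lm_\va}(x/\va)\|_{L^2}=O(\va)$ by \eqref{echilm}), $u_\va\to u^*$ strongly in $L^2$, and $A(x/\va)\nabla\phi_\va^{k\gamma}\rightharpoonup \widehat{A}\cdot(e_\gamma\otimes e_k)$ weakly in $L^2$ by Lemma \ref{con-lemma} combined with the fact that a $Y$–periodic sequence $g_\va\to g$ in $L^2(Y)$ yields $g_\va(x/\va)\rightharpoonup\fint_Y g$. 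Taking the limit and then using $-\mathrm{div}\,\xi^*=F$ tested against $P_k^\gamma\psi$, the elasticity symmetry of $\widehat{A}$, and integration by parts, produces
\[
\int \xi^*_{k\gamma}\psi\,dx=\int \widehat{A}_{ki}^{\gamma\alpha}\,\partial_i (u^*)^\alpha\,\psi\,dx\qquad\text{for all }\psi\in C_c^\infty(\Omega),
\]
i.e.\ $\xi^*=\widehat{A}\nabla u^*$, as required.

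The main technical obstacle is verifying the strong convergence $\chi^{\lm_\va}\to \chi^\rho$ in $H^1(Y)$ that is needed for the weak–$L^2$ limit of $A(x/\va)\nabla\phi_\va^{k\gamma}$. For $0<\rho\le\infty$ this follows directly from energy comparison (see \eqref{reith21}--\eqref{reith23} and \eqref{echilm}). For $\rho=0$, without the Lipschitz hypothesis on $A$, one cannot invoke Theorem \ref{s-pp-theorem} directly; here I would repeat the approximation argument of Lemma \ref{con-lemma}: approximate $A$ in $L^q_{\per}(Y)$ by a sequence of smooth $B$, apply the result for $B$ (where $\chi_B^{\lm_\va}\to\chi_B^0$ in $H^1(Y)$), and use the reverse–H\"older based comparison \eqref{pith215} to control the perturbation uniformly in $\lm_\va$. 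This reduces the $\rho=0$ case to the smooth one and closes the argument.
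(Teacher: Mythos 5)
Your proposal follows essentially the same route as the paper's proof: Tartar's method with the $\va$-dependent oscillating test function $P_j^\beta+\va\chi_j^{\lambda_\va,\beta}(x/\va)$, $\lambda_\va=\kappa/\va$, cancellation of the bulk second- and fourth-order terms via the elasticity symmetry of $A$, and identification of the limiting flux through Lemma~\ref{con-lemma}. The one place you go beyond the paper's exposition is in spelling out that the passage to the limit in the final term of \eqref{h-12} actually requires the strong $L^2(Y)$ convergence $\nabla\chi^{\lambda_\va}\to\nabla\chi^\rho$ (not merely $\widehat{A^{\lambda_\va}}\to\widehat A$) together with the averaging fact for varying periodic data; this is indeed what the estimates behind Lemma~\ref{ith21} and the smoothing argument in the proof of Lemma~\ref{con-lemma} provide, and your proposed treatment of $\rho=0$ by approximating $A$ with smooth $B$ and using the reverse-H\"older comparison \eqref{pith215} is exactly the paper's mechanism, so the argument closes.
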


\begin{proof}
This is  proved by using Tartar's  method of test functions.
Note that since $\kappa< 1$,
 by the energy estimate (\ref{energy-0}), 
\begin{equation}\label{e-estimate-1}
\kappa \|\nabla^2 u_\va\|_{L^2(\Omega)} +
 \| u_\va\|_{H^1(\Omega)} \le C\big\{  \| F \|_{H^{-1}(\Omega)} +  \| G\|_{H^2(\Omega)} \big\},
\end{equation}
where $C$ depends on $d$, $\nu_1$, $\nu_2$ and $\Omega$.
Let $\{ u_{\va^\prime}\}$ be a sequence such that 
$u_{\va^\prime} \to u$ weakly in $H^1(\Omega; \R^d)$ and
$A(x/\va^\prime) \nabla u_{\va^\prime} \to H$ weakly in $L^2(\Omega; \R^{d\times d} )$.
We will show that $H=\widehat{A}\nabla u$ in $\Omega$.
Since $-\text{\rm div}(H)=F$ in $\Omega$, we see that 
$-\text{\rm div} (\widehat{A}\nabla u)=F$ in $\Omega$.
By the uniqueness of weak solutions in $H^1(\Omega; \R^d)$ for $\mathcal{L}_0$,
we deduce that $u=u_0$.
As a result, we obtain that $u_\va \to u_0$ weakly in $H^1(\Omega; \R^d)$ and
$A(x/\va)\nabla u_\va \to \widehat{A}\nabla u_0$ weakly in $L^2(\Omega; \R^{d\times d } )$, as $\va\to 0$.

To show $H=\widehat{A}\nabla u$, for notational simplicity, we let $\va=\va^\prime$
and  $\lambda =\kappa/\va $.
Note that
\begin{equation}\label{cor-eq}
\mathcal{L}_\va\Big\{
P_j^\beta + \va \chi_j^{\lambda,\beta} (x/\va) \Big\} =0 \quad \text{ in } \mathbb{R}^d.
\end{equation}
It follows that
\begin{equation}\label{h-10}
\aligned
& \lambda^2 \va^2\int_\Omega
\Delta \big\{ P_j^\beta +\va \chi^{\lambda, \beta}_j (x/\va) \big\}
\cdot \Delta (u_\va \psi)\, dx\\
&\qquad\qquad
+
\int_\Omega
A(x/\va) \nabla \big( P_j^\beta +\va \chi_j^{\lambda, \beta} (x/\va) \big)
\cdot \nabla (u_\va \psi) \, dx=0,
\endaligned
\end{equation}
for any $\psi\in C_0^\infty (\Omega)$.
Also note that
\begin{equation}\label{h-11}
\aligned
& \lambda^2 \va^2\int_\Omega
\Delta u_\va \cdot \Delta \big\{ ( P_j^\beta +\va \chi_j^{\lambda, \beta} (x/\va) ) \psi \big\}\, dx\\
& +
\int_\Omega A(x/\va)\nabla u_\va
\cdot
\nabla \big\{ (P_j^\beta +\va \chi_j^{\lambda, \beta} (x/\va) )  \psi \big\}\, dx
= \langle F, ( P_j^\beta +\va \chi_j^{\lambda, \beta} (x/\va) ) \psi \rangle.
\endaligned
\end{equation}
By subtracting (\ref{h-10}) from (\ref{h-11}), we obtain 
\begin{equation}\label{h-12}
\aligned
& 2\lambda^2 \va^2 \int_\Omega
\Delta u_\va \cdot \nabla (P_j^\beta +\va \chi_j^{\lambda, \beta} (x/\va) ) \nabla \psi\, dx\\
& \qquad -2\lambda^2 \va^2 \int_\Omega
\Delta ( P_j^\beta +\va \chi_j^{\lambda, \beta} (x/\va) ) \cdot \nabla u_\va \cdot \nabla \psi\, dx\\
&
\qquad +\lambda^2 \va^2\int_\Omega 
\Delta u_\va \cdot (P_j^\beta +\va \chi_j^{\lambda, \beta} (x/\va) ) \Delta \psi\, dx\\
&\qquad -\lambda^2 \va^2\int_\Omega
\Delta ( P_j^\beta +\va \chi_j^{\lambda, \beta} (x/\va) ) u_\va \cdot \Delta \psi\, dx\\
&\qquad + 
\int_\Omega A(x/\va) \nabla u_\va \cdot ( P_j^\beta +\va \chi_j^{\lambda, \beta} (x/\va) ) \nabla \psi\, dx\\
& \qquad
-\int_\Omega A(x/\va) \nabla ( P_j^\beta +\va \chi_j^{\lambda, \beta} (x/\va) ) u_\va \cdot \nabla \psi\, dx\\
&=  \langle F, ( P_j^\beta +\va \chi_j^{\lambda, \beta} (x/\va) ) \psi \rangle.
\endaligned
\end{equation}
We now let $\va\to 0$ in (\ref{h-12}).
Using (\ref{e-estimate-1}) and (\ref{echilm}), it is not hard to see that the first four terms in the left-hand side of
(\ref{h-12}) converge  to zero, while the right-hand side converges to $\langle F, P_j^\beta \psi\rangle$.
Also, the fifth term in the left-hand side converges to 
$$
\int_\Omega H_i^\alpha  \cdot P_j^{\alpha \beta } \frac{\partial \psi}{\partial x_i}  \, dx
=\langle F, P_j^\beta \psi \rangle
-\int_\Omega H_j^\beta \psi\,dx.
$$
Finally, we observe that by Lemma \ref{con-lemma},
 $\widehat{A^\lambda} \to \widehat{A}$ as $\va\to 0$, and that $u_\va \to u$ strongly in $L^2(\Omega; \R^d)$.
This implies that the last term in the left-hand side of (\ref{h-12}) converges to
$$
-\int_\Omega \widehat{A}_{ij}^{\alpha\beta} u^\alpha \frac{\partial \psi}{\partial x_i}\, dx
=\int_\Omega \widehat{A}_{ij}^{\alpha\beta} \frac{\partial u^\alpha}{\partial x_i}\psi\, dx,
$$
where we have used integration by parts.
Since $\psi \in C_0^\infty (\Omega)$ is arbitrary,
we see that
$$
H_j^\beta = \widehat{A}_{ij}^{\alpha\beta} \frac{\partial u^\alpha}{\partial x_i}
= \widehat{A}_{ji}^{\beta\alpha } \frac{\partial u^\alpha}{\partial x_i},
$$
where we have used the symmetry conditions of $\widehat{A}$.
Hence, $H=\widehat{A}\nabla u$.
\end{proof}


\section{Convergence rates}\label{section-4}

In this section we give the proof of Theorem \ref{coth1}.
To this end, 
we   fix $0<\lambda<\infty$ and consider the Dirichlet problem, 
\begin{equation}\label{DP-3}
\mathcal{L}_\va^\lm (u_{\va, \lm} )  =F \quad \text{ in } \Omega
\quad \text{ and } \quad
u_{\va, \lm}-G \in H^2_0(\Omega; \R^d),
\end{equation}
where  $\mathcal{L}_\va^\lm$ is given by (\ref{lvalm}),
 $F\in L^2(\Omega; \R^d)$ and $G\in H^2(\Omega; \R^d)$.
 Let $u_{0, \lm}\in H^1(\Omega; \R^d)$ be the solution of the homogenized problem,
\begin{equation}\label{homo-4}
-\text{\rm div} \big( \widehat{A^\lm}  \nabla u_{0, \lm} \big) =F 
\quad \text{ in } \Omega \quad \text{ and } \quad u_{0, \lm}-G \in H_0^1(\Omega; \R^d),
\end{equation}
where $\widehat{A^\lm}$ is given by (\ref{alam}).
We shall study the convergence rate of $u_{\va,  \lm}$ to $u_{0, \lm}$ as $\va \to 0$.

Let $\eta_{t} \in C_{0}^{\infty}(\Omega)$ be a cut-off function such that
 \begin{align}\label{cutoff}
\begin{split}
&0\leq \eta_t \leq 1,  |\nabla^k \eta_t| \leq C t^{-k} \text{ for }  k=1,2,\\
&\eta_t=1 ~\text{ if } x\in \Omega\!\setminus\!\Omega_{4t} \quad\text{and}\quad
 \eta_{t}(x)=0  ~\text{ if } x \in \Omega_{3t}, \end{split}
 \end{align}
where $\va \le t<1$ and $ \Omega_t $ is defined in  \eqref{omegava}.
Let 
\begin{equation}\label{w-e-1}
w_{\va, \lm}
= u_{\va, \lm} - u_{0, \lm} + (u_{0, \lm} -G) (1-\eta_t) 
- \va \chi^\lm (x/\va) \eta_t  S_\va (\nabla u_{0, \lm} ),
\end{equation}
where $t=(1+\lm)\va $ and $\chi^\lambda$ is the corrector given by (\ref{chilam}).
The $\va$-smoothing operator $S_\va$ in (\ref{w-e-1}) is  defined by
\begin{align*}
  S_\varepsilon(f)(x)= \int_{\mathbb{R}^d}f(x-\varsigma )\varphi_\varepsilon(\varsigma) d\varsigma,
\end{align*}
 where $\varphi_{\va }(\varsigma)=\varepsilon^{- d}\varphi(\varsigma / \va)$ and
  $\varphi$ is a fixed function in $ C_0^{\infty}(B(0,1/2))$ such that $\varphi\geq 0$ and $\int_{\mathbb{R}^{d}}\varphi dx=1$.

\begin{lemma} \label{lsmooth1}
Let  $f\in W^{1, p}(\mathbb{R}^d)$ for some $1\leq p\leq \infty$. Then
  \begin{align}
  \|S_{\va}(f)-f\|_{L^p( \mathbb{R}^d)}\leq \va \|\nabla f\|_{L^p(\mathbb{R}^d)}. \label{lsmooth1re1} 
  \end{align}
Suppose that $f, g\in L^p_{loc}(\R^d) $ for some $1\le p<\infty$ and $ g$ is 1-periodic. Then
\begin{align}
&\| g^\va \nabla^k S_\varepsilon( f)\|_{L^p(\mathcal{O})}\leq C_k\va^{-k}   \|g \|_{L^p(Y)}\|f\|_{L^p(\mathcal{O} ^\varepsilon)} \label{22}
 \end{align}
 for $k\ge 0$, 
 where  $g^\va (x)=g(x/\va)$,
 $\mathcal{O}^\va =\{ x\in \mathbb{R}^d: \text{\rm dist} (x, \mathcal{O} )< \va\}$,  and
 $C_k $ depends only on $d$, $k$ and $p$.
\end{lemma}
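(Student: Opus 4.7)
\textbf{Proof strategy for Lemma \ref{lsmooth1}.} For \eqref{lsmooth1re1}, the plan is to use the fundamental theorem of calculus. Since $\int_{\R^d}\varphi_\va = 1$, one writes
\[
S_\va(f)(x) - f(x) = \int_{\R^d} \varphi_\va(\varsigma)\big(f(x-\varsigma) - f(x)\big)\, d\varsigma
= -\int_{\R^d} \varphi_\va(\varsigma) \int_0^1 \nabla f(x - s\varsigma)\cdot \varsigma\, ds\, d\varsigma,
\]
and then applies Minkowski's integral inequality to pull the $L^p_x$ norm inside, exploiting translation invariance of $\|\nabla f\|_{L^p(\R^d)}$. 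Since $\varphi$ is supported in $B(0,1/2)$, one has $|\varsigma|\le \va/2$ on $\mathrm{supp}(\varphi_\va)$ and $\int \varphi_\va = 1$, which immediately gives the bound with constant $\va/2\le\va$. The case $p=\infty$ is read off the same pointwise representation without Minkowski.

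For \eqref{22}, the starting point is to move derivatives onto the mollifier,
\[
\nabla^k S_\va(f)(x) = \va^{-k}\int_{\R^d} (\nabla^k\varphi)_\va(x-y)\, f(y)\, dy,
\]
and to apply Jensen's inequality on the finite measure $|(\nabla^k\varphi)_\va(x-y)|\, dy$ (whose total mass equals $\|\nabla^k\varphi\|_{L^1}$ independently of $\va$). Using also the $L^\infty$ bound $\|(\nabla^k\varphi)_\va\|_\infty \le C_k\va^{-d}$ and the fact that the support of $(\nabla^k\varphi)_\va(x-\cdot)$ lies in $B(x,\va/2)$, this yields the pointwise estimate
\[
|\nabla^k S_\va(f)(x)|^p \le C_k\, \va^{-kp-d} \int_{|x-y|\le \va/2} |f(y)|^p\, dy.
\]

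I would then multiply by $|g(x/\va)|^p$, integrate over $\mathcal{O}$, and swap the order of integration by Fubini; noting that $x\in\mathcal{O}$ and $|x-y|\le\va/2$ force $y\in\mathcal{O}^\va$, the inner integral becomes $\int_{\mathcal{O}\cap B(y,\va/2)}|g(x/\va)|^p\, dx$. The change of variables $z=x/\va$ converts this into $\va^d\int_{B(y/\va,1/2)}|g(z)|^p\, dz$, and here the periodicity of $g$ enters: the ball $B(y/\va,1/2)$ is covered by a bounded number of translates of $Y$, so this last integral is at most $C\|g\|_{L^p(Y)}^p$. Balancing the powers of $\va$ gives exactly \eqref{22}. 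The only conceptual step is this periodic-cover estimate; everything else is bookkeeping for a standard mollifier bound, so I do not anticipate any real obstacle.
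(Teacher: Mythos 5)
The paper does not prove Lemma \ref{lsmooth1} but simply refers the reader to \cite{shenapde2017}, so there is no in-paper argument to compare against. Your proposal is correct and is indeed the standard argument one finds in that reference. A few small checkpoints, all of which you handle adequately: in \eqref{lsmooth1re1}, the line-segment representation $f(x-\varsigma)-f(x)=-\int_0^1\nabla f(x-s\varsigma)\cdot\varsigma\,ds$ is legitimate for $W^{1,p}$ functions by the absolute continuity on lines characterization, Minkowski's integral inequality together with translation invariance of $\|\nabla f\|_{L^p}$ and $\mathrm{supp}\,\varphi_\va\subset B(0,\va/2)$ gives the constant $\va/2\le\va$, and the $p=\infty$ case is read off pointwise since $W^{1,\infty}(\R^d)$ functions are Lipschitz. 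For \eqref{22}, the computation $\nabla^k S_\va(f)=\va^{-k}(\nabla^k\varphi)_\va * f$ is right, and combining Jensen on the probability measure $|(\nabla^k\varphi)_\va(x-y)|\,dy/\|\nabla^k\varphi\|_{L^1}$ (valid since $1\le p<\infty$) with the size bound $\|(\nabla^k\varphi)_\va\|_\infty\le C_k\va^{-d}$ and support in $B(x,\va/2)$ yields exactly the pointwise inequality you state. The Fubini swap, the observation that $x\in\mathcal{O}$ and $|x-y|\le\va/2$ force $y\in\mathcal{O}^\va$, the rescaling $z=x/\va$, and the bound $\int_{B(y/\va,1/2)}|g|^p\le C\|g\|_{L^p(Y)}^p$ by covering a radius-$1/2$ ball with a bounded number of period cells all go through, and the powers of $\va$ ($\va^{-kp-d}\cdot\va^d=\va^{-kp}$) balance as required. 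The constant $C_k$ in your argument also depends on the choice of the mollifier $\varphi$, but since $\varphi$ is fixed once and for all in the paper this is consistent with the stated dependence on $d$, $k$, $p$ only. No gaps.
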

\begin{proof}
See e.g., \cite{shenapde2017}.
\end{proof}

\begin{lemma}\label{lemma-3.1}
Let $\Omega$ be a bounded Lipschitz domain in $\mathbb{R}^d$.
Let $u_{\va, \lm}$, $u_{0, \lm}$ and $w_{\va, \lm}$ be given by \eqref{DP-3}, \eqref{homo-4} and \eqref{w-e-1}, respectively.
Suppose $u_{0, \lm} \in H^2(\Omega; \R^d)$.
Then for any $\psi\in H^2_0(\Omega; \R^d)$ and $0<\va<(1+\lm)^{-1}$,
\begin{equation}\label{3.1-0}
\aligned
|\langle \mathcal{L}_{\va}^\lambda  ( w_{\va, \lm} ), \psi \rangle |
 & \le C \| u_{0, \lm} \|_{H^2(\Omega)} \Big\{
\va   \|\nabla \psi\|_{L^2(\Omega)}
+ \va^2 \lm^2  \| \Delta \psi\|_{L^2(\Omega)} \Big\} \\
&\qquad
+C t^{1/2} \Big\{ \| u_{0, \lm} \|_{H^2(\Omega)} + \|G\|_{H^2(\Omega)} \Big\} \| \nabla \psi\|_{L^2(\Omega_{5t})}\\
& \qquad + 
C \va^2 \lambda^2 t^{-{1/2}} 
\Big\{ \| u_{0, \lm} \|_{H^2(\Omega)} + \|G\|_{H^2(\Omega)} \Big\} \| \Delta \psi\|_{L^2(\Omega_{5t})},
\endaligned
\end{equation}
where $t=(1+\lm)\va$ and $C$ depends only on $d$, $\nu_1$, $\nu_2$, and $\Omega$.
\end{lemma}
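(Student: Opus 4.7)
The plan is to write $w_{\va,\lm} = u_{\va,\lm} - v_\va$ with the ansatz
\[
v_\va := \eta_t\, u_{0,\lm} + (1-\eta_t)\, G + \va\,\chi^\lm(x/\va)\,\eta_t\, S_\va(\nabla u_{0,\lm}),
\]
and verify directly that $w_{\va,\lm}\in H^2_0(\Omega;\R^d)$---the cutoff $\eta_t$ vanishes in a neighbourhood of $\partial\Omega$, so $v_\va = G$ there, while $u_{\va,\lm}-G\in H^2_0$. Testing $\mathcal{L}_\va^\lm(w_{\va,\lm})$ against $\psi\in H^2_0(\Omega;\R^d)$, using $\mathcal{L}_\va^\lm(u_{\va,\lm}) = F$ together with the homogenized equation $F = -\text{\rm div}(\widehat{A^\lm}\nabla u_{0,\lm})$, reduces the problem to estimating
\[
\int_\Omega \bigl(\widehat{A^\lm}\nabla u_{0,\lm} - A(x/\va)\nabla v_\va\bigr)\cdot\nabla\psi\,dx \;-\; \lm^2\va^2\int_\Omega \Delta v_\va \cdot \Delta\psi\,dx.
\]

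Expanding $\nabla v_\va$ and $\Delta v_\va$ by the Leibniz rule splits the integrand into (a) \emph{bulk} pieces (where $\eta_t=1$) and (b) \emph{boundary-layer} pieces involving $\nabla^k\eta_t$ or $1-\eta_t$, all supported in $\Omega_{5t}$. For the bulk, the principal simplification comes from the rescaled corrector identity $\mathcal{L}_\va^\lm(P_j^\beta + \va\chi_j^{\lm,\beta}(x/\va))=0$: the $\va^{-1}$-singular contribution from $\text{\rm div}_x[A(x/\va)\nabla_y\chi^\lm(x/\va)]$ exactly cancels the $\va^{-1}$-part of $\lm^2\va^2\Delta^2[\va\chi^\lm(x/\va)]$. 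After this cancellation, the surviving oscillatory term has the form $\int\eta_t[B^\lm(x/\va) - \widehat{A^\lm}]S_\va(\nabla u_{0,\lm})\cdot\nabla\psi$ with $B^\lm(y):=A(y)(I+\nabla_y\chi^\lm(y))$; the main obstacle is extracting an $\va$-factor from it. Because $\partial_{y_i}B^\lm_{ij}=\lm^2\Delta_y^2\chi^\lm_j\neq 0$, the classical antisymmetric flux corrector $\partial_{y_k}\phi_{kij}=B_{ij}-\widehat{A}_{ij}$ is obstructed; I would instead introduce $\phi^\lm_{kij}=-\phi^\lm_{ikj}$ solving $\partial_{y_k}\phi^\lm_{kij} = B^\lm_{ij} - \widehat{A^\lm}_{ij} - \lm^2\partial_{y_i}\Delta_y\chi^\lm_j$ (solvable because the right-hand side is mean-zero and divergence-free in $y_i$ by design), and then integrate by parts, using the antisymmetry of $\phi^\lm$ to annihilate the $\nabla^2\psi$-coupling. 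This produces the $C\va\|u_{0,\lm}\|_{H^2}\|\nabla\psi\|_{L^2}$-contribution, while the $\lm^2\partial_{y_i}\Delta_y\chi^\lm_j$-residual---combined with the bulk portion of $\lm^2\va^2\int\Delta v_\va\cdot\Delta\psi$---gives the $\va^2\lm^2\|u_{0,\lm}\|_{H^2}\|\Delta\psi\|_{L^2}$-term. The corrector bounds \eqref{echilm} and Lemma \ref{lsmooth1} control all remaining interior pieces: those with $\va\chi^\lm(x/\va)$ paired with derivatives of $S_\va(\nabla u_{0,\lm})$, and the smoothing errors $\nabla u_{0,\lm} - S_\va(\nabla u_{0,\lm})$ of size $C\va\|u_{0,\lm}\|_{H^2}$.

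For the boundary-layer terms, the crucial input is inequality \eqref{241} of Lemma \ref{lsmooth4} applied to $u_{0,\lm}-G\in H_0^1(\Omega)\cap H^2(\Omega)$, which yields $\|u_{0,\lm}-G\|_{L^2(\Omega_{4t})}\le Ct^{3/2}(\|u_{0,\lm}\|_{H^2}+\|G\|_{H^2})$. The dominant first-order boundary contribution $\int A(x/\va)(u_{0,\lm}-G)\nabla\eta_t\cdot\nabla\psi$ absorbs $|\nabla\eta_t|\le Ct^{-1}$, producing exactly $Ct^{1/2}(\|u_{0,\lm}\|_{H^2}+\|G\|_{H^2})\|\nabla\psi\|_{L^2(\Omega_{5t})}$; the dominant fourth-order boundary contribution $\lm^2\va^2\int(u_{0,\lm}-G)\Delta\eta_t\cdot\Delta\psi$ absorbs $|\Delta\eta_t|\le Ct^{-2}$, yielding $C\lm^2\va^2 t^{-1/2}(\|u_{0,\lm}\|_{H^2}+\|G\|_{H^2})\|\Delta\psi\|_{L^2(\Omega_{5t})}$. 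All other cutoff terms (for instance those involving the corrector piece paired with derivatives of $\eta_t$) are of lower order and controlled by \eqref{echilm} and Lemma \ref{lsmooth4}. Summing the bulk and boundary contributions delivers \eqref{3.1-0}.
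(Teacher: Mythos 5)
Your proposal is correct and follows essentially the same route as the paper's proof: the same corrector ansatz $w_{\va,\lm}=u_{\va,\lm}-v_\va$, the same cutoff-based bulk/boundary-layer split, the same modified flux corrector built from $B^\lm=\lm^2\nabla\Delta\chi^\lm-A\nabla\chi^\lm-A+\widehat{A^\lm}$ (you arrive at it by subtracting the $\lm^2\nabla\Delta\chi^\lm$ residual from the naive second-order flux, which is the paper's $B^\lm$ up to sign), and the same boundary-layer bounds via Lemma \ref{lsmooth4}. The paper simply organizes the algebra by writing out $\mathcal{L}^\lm_\va(w_{\va,\lm})$ explicitly and estimating nine resulting integrals $I_1,\dots,I_9$, which is exactly what your Leibniz-rule expansion produces.
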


\begin{proof}
Note that $w_{\va, \lm} \in H^2_0(\Omega; \R^d)$ and
$$
\aligned
\mathcal{L}_{\va}^\lm  (w_{\va, \lm})
&=-\text{\rm div} \big\{   ( \widehat{A^\lm} -A (x/\va) ) \nabla u_{0, \lm} \big\} 
-\lm^2 \va^2 \Delta^2 (u_{0, \lm}) 
+\mathcal{L}_\va^\lm \big\{ (u_{0, \lm} -G) (1-\eta_t) \big\}\\
&\qquad
-\mathcal{L}_\va^\lm \big\{ \va \chi^\lm (x/\va)  \eta_t  S_\va (\nabla u_{0, \lm}) \big\}\\
&=-\text{\rm div} \big\{  ( \widehat{A^\lm} -A(x/\va ) )  ( \nabla u_{0, \lm} -\eta_t  S_\va (\nabla u_{0, \lm}) ) \big\}\\
&\qquad
-\lm^2 \va^2 \Delta^2 (u_{0, \lm}) 
+\mathcal{L}_\va^\lm \big\{ (u_{0, \lm} -G) (1-\eta_t) \big\}\\
&\qquad
-\text{\rm div}
\big\{ B^\lm (x/\va) \eta_t  S_\va (\nabla u_{0, \lm} ) \big\}
-\lm^2 \va\,  \text{\rm div}
\big\{ \Delta \chi^\lm (x/\va) \nabla [ \eta_ t S_\va (\nabla u_{0, \lm})] \big\}\\
&\qquad
-2\lm^2 \va^2 \Delta \big\{ \nabla \chi^\lm (x/\va) 
\nabla [ \eta_t S_\va (\nabla u_{0, \lm})  ] \big\}
-\lm^2 \va^3
\Delta \big\{ \chi^\lm (x/\va) \Delta [ \eta_t S_\va (\nabla u_{0, \lm})] \big\}\\
&\qquad
+\va \, \text{\rm div}
\big\{ \chi^\lm (x/\va) A(x/\va) \nabla [ \eta_t S_\va( \nabla u_{0, \lm}  ) ]  \big\},
\endaligned
$$
where
\begin{equation}\label{B}
B^\lm (y)  =\lm^2 \nabla \Delta \chi^\lm (y) - A\nabla \chi^\lm (y)  -A (y)  + \widehat{A^\lm}.
\end{equation}
It follows that for any $\psi \in H_0^2(\Omega; \R^d)$,
$$
\aligned
|\langle \mathcal{L}_{\va}^\lambda  ( w_{\va, \lm} ) , \psi \rangle |
 & \le  C \int_\Omega | [ \nabla u_{0, \lm}  -\eta_t S_\va (\nabla u_{0, \lm} )] \nabla \psi|\, dx
+ \va^2 \lm^2  \int_\Omega
| \Delta u_{0, \lm} | |\Delta \psi|\, dx\\
&\qquad
+ \lm^2 \va^2\int_\Omega
|\Delta [ (u_{0, \lm} -G ) (1-\eta_t) ] | |\Delta \psi|\, dx  \\
&\qquad
+ C \int_\Omega |\nabla [ (u_{0, \lm} -G)(1-\eta_t)] | |\nabla \psi|\, dx \\
& \qquad
+ C \Big|\int_\Omega B^\lm (x/\va) \eta_t  S_\va (\nabla u_{0, \lm} ) \nabla \psi \, dx\Big|\\
&\qquad
 +C\va
\int_\Omega |\chi^\lambda (x/\va) \nabla \big [ \eta_t S_\va (\nabla u_{0, \lm})\big ] \nabla \psi|\, dx\\
&\qquad
 + C\va\lm^2 \int_\Omega
| \nabla^2 \chi^\lambda (x/\va) \nabla \big [ \eta_t S_\va (\nabla u_{0, \lm} ) \big] \nabla \psi| \, dx \\
&\qquad
+ 
C \va^2 \lm^2 \
\int_\Omega
|\nabla \chi^\lm (x/\va) \nabla^2 \big[ \eta_t S_\va (\nabla u_{0, \lm}) \big] \nabla  \psi|\, dx \\
&\qquad 
+C \va^3 \lm^2
\int_\Omega
|   \chi^\lm (x/\va) \nabla^3 \big [ \eta_t S_\va (\nabla u_{0, \lm}) \big] \nabla  \psi|\, dx\\
& =I_1+I_2+\cdots + I_9.
\endaligned
$$
Using Lemma \ref{lsmooth1} and the Cauchy  inequality, it is not hard  to see that 
\begin{equation}\label{3.1-1}
I_1\le 
C \Big\{
\|\nabla u_{0, \lm}\|_{L^2(\Omega_{5t })}
\|\nabla \psi\|_{L^2(\Omega_{5t})}
+ \va \|\nabla^2 u_{0, \lambda}\|_{L^2(\Omega\setminus \Omega_{2t})}
\|\nabla \psi\|_{L^2(\Omega)} \Big\}.
\end{equation}
Next, we observe  that
\begin{equation}\label{3.1-1-1}
\aligned
I_2  + I_3 +I_4   & \le  \va^2 \lm^2 \| \Delta u_{0, \lm} \|_{L^2(\Omega)} \|\Delta \psi\|_{L^2(\Omega)}
+ C \lm^2 \va^2 t^{-1/2} 
\| u_{0, \lm} -G\|_{H^2(\Omega)} \|\Delta \psi \|_{L^2(\Omega_{4t})}\\
& \qquad
+ C t^{1/2} \| u_{0, \lm} -G \|_{H^2(\Omega)} \|\nabla \psi\|_{L^2(\Omega_{4t})}.
\endaligned
\end{equation}
To bound $I_5$, we note that 
by (\ref{echilm}), we have  $\|B^\lambda\|_{L^2(Y)}\le C$, where $C$ depends only on $d$, $\nu_1$ and $\nu_2$.
Moreover, by the definition of $B^\lm=(B^\lm_{ij}), 1\leq i,j\leq d$,
\begin{align}\label{eblm}
\partial_{y_i}B^\lm_{ij}=0 \quad \text{ and  } \quad
\quad \int _{Y}B^\lm_{ij}\, dy=0.
\end{align}
This allows us to
construct a matrix of 1-periodic  flux correctors $\mathfrak{B}^\lm_{kij}(y)$ such that
  $$\mathfrak{B}_{kij}^\lm= -\mathfrak{B}_{ikj}^\lm,\quad \pa_{y_k}\mathfrak{B}_{kij}^\lm(y)=B_{ij}^\lm(y), \quad
 \|\mathfrak{B}^\lm_{kij}\|_{H^1(Y)}\leq C,$$ with $C $ depending only on $d$, $\nu_1$ and $\nu_2$.
 It follows that
 \begin{equation}\label{3.1-3}
 \aligned
 I_5
 &\le C \va \| \mathfrak{B}^\lm (x/\va)  \nabla (\eta_t S_\va (\nabla u_{0, \lm})) \nabla \psi\|_{L^1(\Omega)}\\
 & \le C 
 \|\nabla u_{0, \lm} \|_{L^2(\Omega_{5t})}
 \|\nabla \psi\|_{L^2(\Omega_{5t})}
 + C \va \|\nabla^2 u_{0, \lm} \|_{L^2(\Omega\setminus \Omega_{2t})} \| \nabla \psi \|_{L^2(\Omega)},
 \endaligned
 \end{equation}
 where we have used the fact $\va t^{-1} \le 1$.
 Using (\ref{echilm}) and (\ref{22}), we also obtain 
 \begin{equation}\label{3.1-4}
 I_6+I_7  + I_8 +I_9
  \le C 
 \|\nabla u_{0, \lm} \|_{L^2(\Omega_{5t})}
 \|\nabla \psi\|_{L^2(\Omega_{5t})}
 + C \va \|\nabla^2 u_{0, \lm} \|_{L^2(\Omega\setminus \Omega_{2t})} \| \nabla \psi \|_{L^2(\Omega)}.
\end{equation}
By collecting estimates for $I_1, I_2, \dots, I_9$, 
we obtain the desired estimate (\ref{3.1-0}).
\end{proof}


\begin{lemma}\label{lemma-3.2}
Let $u_{\va, \lm}$, $u_{0, \lm}$ and $w_{\va, \lm}$ be the same as in Lemma \ref{lemma-3.1}.
Assume that $u_{0, \lambda} \in H^2(\Omega; \R^d)$.
Then
\begin{equation}\label{3.2-0}
\lm \va 
\| \Delta  w_{\va, \lm} \|_{L^2(\Omega)}
+ \|\nabla w_{\va, \lambda} \|_{L^2(\Omega)}
\le C \big(  (1+\lm) \va \big)^{1/2}  \big\{ \| u_0\|_{H^2(\Omega)}
+ \| G\|_{H^2(\Omega)} \big\},
\end{equation}
where $C$ depends only on $d$, $\nu_1$, $\nu_2$, and $\Omega$.
\end{lemma}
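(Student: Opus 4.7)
The plan is to test the equation for $w_{\va,\lm}$ against itself, exploit the coercivity of $\mathcal{L}_\va^\lm$ on $H^2_0(\Omega;\R^d)$, and apply the bound from Lemma \ref{lemma-3.1} to the right-hand side, absorbing the terms involving $\nabla w_{\va,\lm}$ and $\Delta w_{\va,\lm}$ by Young's inequality.

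More precisely, since $w_{\va,\lm}\in H^2_0(\Omega;\R^d)$, integration by parts gives
\begin{equation*}
\langle \mathcal{L}_\va^\lm(w_{\va,\lm}),w_{\va,\lm}\rangle
= \lm^2\va^2 \int_\Omega |\Delta w_{\va,\lm}|^2\,dx
+ \int_\Omega A(x/\va)\nabla w_{\va,\lm}\cdot \nabla w_{\va,\lm}\,dx .
\end{equation*}
Together with the elasticity condition \eqref{econ} and the first Korn inequality in $H^1_0$, this yields the lower bound
\begin{equation*}
\lm^2\va^2\|\Delta w_{\va,\lm}\|_{L^2(\Omega)}^2 + \|\nabla w_{\va,\lm}\|_{L^2(\Omega)}^2
\le C\,\langle \mathcal{L}_\va^\lm(w_{\va,\lm}),w_{\va,\lm}\rangle .
\end{equation*}

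Next, I would apply Lemma \ref{lemma-3.1} with $\psi=w_{\va,\lm}$ and $t=(1+\lm)\va$. Using the trivial bounds $\|\nabla w_{\va,\lm}\|_{L^2(\Omega_{5t})}\le \|\nabla w_{\va,\lm}\|_{L^2(\Omega)}$ and $\|\Delta w_{\va,\lm}\|_{L^2(\Omega_{5t})}\le \|\Delta w_{\va,\lm}\|_{L^2(\Omega)}$, each of the four terms on the right-hand side of \eqref{3.1-0} is of the form (constant)$\times$(a power of $\va$ and $\lm$)$\times$($\|u_{0,\lm}\|_{H^2}+\|G\|_{H^2}$)$\times$($\|\nabla w_{\va,\lm}\|_{L^2}$ or $\lm\va\|\Delta w_{\va,\lm}\|_{L^2}$). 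Young's inequality $ab\le \tfrac14 a^2 + b^2$ then produces absorbable quadratic pieces, leaving data terms whose coefficients are $\va^2$, $\va^2\lm^2$, $t$, and $\va^4\lm^4 t^{-1}$.

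The key book-keeping step is to check that, under the standing hypothesis $\va<(1+\lm)^{-1}$, all four of these coefficients are bounded by a multiple of $t=(1+\lm)\va$: indeed $\va\lm\le \va(1+\lm)\le 1$ gives $\va^2\lm^2\le \va\lm\le t$ and $\va^4\lm^4 t^{-1}\le \va^2\lm^2\le t$, while $\va^2\le \va\le t$ is trivial. Combining, we obtain
\begin{equation*}
\lm^2\va^2\|\Delta w_{\va,\lm}\|_{L^2(\Omega)}^2
+ \|\nabla w_{\va,\lm}\|_{L^2(\Omega)}^2
\le C\,(1+\lm)\va\,\bigl\{\|u_{0,\lm}\|_{H^2(\Omega)} + \|G\|_{H^2(\Omega)}\bigr\}^2,
\end{equation*}
and taking square roots yields \eqref{3.2-0}.

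I expect the main (though modest) obstacle to be verifying the coercivity in the form stated, since the elasticity tensor is not pointwise positive definite on all of $\R^{d\times d}$; the argument therefore requires the first Korn inequality for $H^1_0$ functions (already used to prove \eqref{energy-0}). The rest is essentially careful matching of powers of $\va$ and $\lm$, using the assumption $\va<(1+\lm)^{-1}$ exactly to render the higher-order singular-perturbation terms subdominant to the geometric boundary-layer term $t=(1+\lm)\va$.
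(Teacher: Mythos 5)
Your proof is correct and follows the paper's intended approach exactly: the paper's own proof simply says to take $\psi = w_{\va,\lm}$ in \eqref{3.1-0} and apply the Cauchy inequality together with the first Korn inequality, and you have filled in precisely those steps. One small bookkeeping slip: after Young's inequality the fourth data coefficient should be $\va^2\lm^2 t^{-1}$ rather than $\va^4\lm^4 t^{-1}$ (you factor $C\va^2\lm^2 t^{-1/2}D\|\Delta w\|$ as $(C\va\lm t^{-1/2}D)\cdot(\va\lm\|\Delta w\|)$ so that the absorbable piece is $\va^2\lm^2\|\Delta w\|^2$, matching the coercivity term), but since $\va^2\lm^2 t^{-1} = \va\lm^2/(1+\lm) \le (1+\lm)\va = t$, the conclusion is unaffected.
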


\begin{proof}
Note that $w_{\va, \lm}\in H_0^2(\Omega; \R^d)$. The  estimate (\ref{3.2-0}) follows readily 
by letting $\psi =w_{\va, \lm}$ in (\ref{3.1-0}) and using the Cauchy inequality
as well as the first Korn inequality.
 \end{proof}

The next theorem gives the sharp convergence rate in $L^2(\Omega)$ for
$\mathcal{L}_\va^\lm$ with $\lm$ fixed.

\begin{theorem}\label{thm-lm}
Suppose $A$ satisfies conditions \eqref{econ}  and \eqref{pcon}.
Let $\Omega$ be a bounded $C^{1, 1}$ domain,  $F\in L^2(\Omega; \R^d)$ and $G\in H^2(\Omega; \R^d)$.
Let $u_{\va, \lm}$ be the weak solution of \eqref{DP-3} and
$u_{0, \lambda}$ the solution of the homogenized problem \eqref{homo-4},
where $0<\lm <\infty$.
Then for  any $0<\va < (1+\lm)^{-1}$, 
\begin{equation}\label{3.3-0}
\| u_{\va, \lm} -u_{0, \lm}\|_{L^2(\Omega)}
\le C (1+\lambda) \va 
\big\{ \| F\|_{L^2(\Omega)} +\| G\|_{H^2(\Omega)} \big\},
\end{equation}
where $C$ depends only on $d$, $\nu_1$, $\nu_2$, and $\Omega$.
\end{theorem}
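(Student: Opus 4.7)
The plan is to lift the boundary-layer-corrected $H^1$ rate of Lemma \ref{lemma-3.2} to the sharp $L^2$ rate via a duality argument, in the spirit of \cite{suslinaD2013, shenzhu2017}. Throughout, let $t=(1+\lambda)\varepsilon$ and decompose
\[
u_{\varepsilon,\lambda}-u_{0,\lambda}=w_{\varepsilon,\lambda}+r_{\varepsilon,\lambda},\qquad r_{\varepsilon,\lambda}:=-(u_{0,\lambda}-G)(1-\eta_t)+\varepsilon\chi^\lambda(x/\varepsilon)\eta_t S_\varepsilon(\nabla u_{0,\lambda}).
\]
The first task is to bound $\|r_{\varepsilon,\lambda}\|_{L^2(\Omega)}$ directly. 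Since $u_{0,\lambda}-G\in H^1_0(\Omega;\mathbb{R}^d)$ and $\mathrm{supp}(1-\eta_t)\subset \Omega_{4t}$, the inequality \eqref{241-0} yields $\|(u_{0,\lambda}-G)(1-\eta_t)\|_{L^2}\le Ct\|u_{0,\lambda}-G\|_{H^1}$. Together with $\|\chi^\lambda\|_{L^2(Y)}\le C(1+\lambda)^{-2}$ from \eqref{echilm} and the $L^2$-boundedness of $S_\varepsilon$, this gives
\[
\|r_{\varepsilon,\lambda}\|_{L^2(\Omega)}\le C(1+\lambda)\varepsilon\{\|F\|_{L^2(\Omega)}+\|G\|_{H^2(\Omega)}\},
\]
where I will invoke the $H^2$ regularity of $u_{0,\lambda}$ (which requires that $\widehat{A^\lambda}$ has constant coefficients satisfying \eqref{econ} uniformly in $\lambda$ by Lemma \ref{ela-lemma}, together with the $C^{1,1}$ hypothesis on $\Omega$) to control $\|u_{0,\lambda}\|_{H^2}$ and thereby $\|u_{0,\lambda}-G\|_{H^1}$.

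The second and main task is to estimate $\|w_{\varepsilon,\lambda}\|_{L^2(\Omega)}$ by duality. For arbitrary $\widetilde F\in L^2(\Omega;\mathbb{R}^d)$, let $v_{\varepsilon,\lambda}\in H^2_0(\Omega;\mathbb{R}^d)$ solve $\mathcal{L}_\varepsilon^\lambda(v_{\varepsilon,\lambda})=\widetilde F$ and $v_{0,\lambda}\in H^1_0$ solve the homogenized equation with source $\widetilde F$; define
\[
\widetilde r_{\varepsilon,\lambda}:=v_{0,\lambda}\eta_t+\varepsilon\chi^\lambda(x/\varepsilon)\eta_t S_\varepsilon(\nabla v_{0,\lambda}),\qquad \widetilde w_{\varepsilon,\lambda}:=v_{\varepsilon,\lambda}-\widetilde r_{\varepsilon,\lambda},
\]
so that $\widetilde w_{\varepsilon,\lambda}\in H^2_0$ is the two-scale remainder for the dual problem (with $G\equiv 0$). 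Since $w_{\varepsilon,\lambda}$ and $v_{\varepsilon,\lambda}$ both lie in $H^2_0$, the symmetry of the bilinear form \eqref{bi} gives
\[
\int_\Omega w_{\varepsilon,\lambda}\cdot\widetilde F\,dx=\langle \mathcal{L}_\varepsilon^\lambda(w_{\varepsilon,\lambda}),v_{\varepsilon,\lambda}\rangle=\langle \mathcal{L}_\varepsilon^\lambda(w_{\varepsilon,\lambda}),\widetilde w_{\varepsilon,\lambda}\rangle+\langle \mathcal{L}_\varepsilon^\lambda(w_{\varepsilon,\lambda}),\widetilde r_{\varepsilon,\lambda}\rangle.
\]

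For the first term, I will apply Lemma \ref{lemma-3.1} with $\psi=\widetilde w_{\varepsilon,\lambda}$ and then insert the global $H^1$ and weighted $H^2$ bounds for $\widetilde w_{\varepsilon,\lambda}$ supplied by Lemma \ref{lemma-3.2} for the dual problem. The four contributions in \eqref{3.1-0}, each carrying a factor of order $\varepsilon$, $\varepsilon^2\lambda^2$, $t^{1/2}$, or $\varepsilon^2\lambda^2 t^{-1/2}$, combine with the $O(((1+\lambda)\varepsilon)^{1/2})$ rate for $\widetilde w_{\varepsilon,\lambda}$ and the $H^2$ bound $\|v_{0,\lambda}\|_{H^2}\le C\|\widetilde F\|_{L^2}$ to produce $C(1+\lambda)\varepsilon\{\|F\|_{L^2}+\|G\|_{H^2}\}\|\widetilde F\|_{L^2}$ after checking (using $\varepsilon<(1+\lambda)^{-1}$) that each product is indeed bounded by $(1+\lambda)\varepsilon$ times the data norms.

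For the second term, I will apply Lemma \ref{lemma-3.1} with $\psi=\widetilde r_{\varepsilon,\lambda}\in H^2_0$ and estimate $\|\nabla \widetilde r_{\varepsilon,\lambda}\|_{L^2(\Omega)}$ and $\|\Delta \widetilde r_{\varepsilon,\lambda}\|_{L^2(\Omega)}$ directly, using \eqref{echilm}, Lemma \ref{lsmooth1}, and the boundary-layer control \eqref{241-0}--\eqref{241-1} applied to $v_{0,\lambda}$ and $\nabla v_{0,\lambda}$; this produces another $C(1+\lambda)\varepsilon\{\|F\|_{L^2}+\|G\|_{H^2}\}\|\widetilde F\|_{L^2}$. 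Combining the two terms and taking $\sup_{\|\widetilde F\|_{L^2}\le 1}$ yields the desired bound on $\|w_{\varepsilon,\lambda}\|_{L^2}$. The main obstacle, and the point requiring the most care, is the bookkeeping in the last two terms of Lemma \ref{lemma-3.1}: the weighted $\Omega_{5t}$-norms have to be traded against the concentration of $\widetilde w_{\varepsilon,\lambda}$ (and $\widetilde r_{\varepsilon,\lambda}$) in the boundary strip, and the resulting powers of $\lambda\varepsilon$ and $t$ must conspire to avoid producing a factor larger than $(1+\lambda)\varepsilon$; this is exactly the place where the choice $t=(1+\lambda)\varepsilon$ and the $\lambda$-uniform $H^2$ estimate for $u_{0,\lambda}, v_{0,\lambda}$ are both essential.
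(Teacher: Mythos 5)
Your proposal is correct and follows the same duality strategy as the paper: decompose $u_{\va,\lm}-u_{0,\lm}$ into the two-scale remainder $w_{\va,\lm}$ plus the explicit boundary-layer piece $r_{\va,\lm}$, bound $r_{\va,\lm}$ in $L^2$ directly, and estimate $w_{\va,\lm}$ by pairing $\langle\mathcal{L}_\va^\lm(w_{\va,\lm}),v_{\va,\lm}\rangle$ against the two-scale remainder of the dual problem, invoking Lemma \ref{lemma-3.1} and Lemma \ref{lemma-3.2} with $t=(1+\lm)\va$ and the $\lambda$-uniform $H^2$ estimate from Lemma \ref{ela-lemma}. The one cosmetic difference is that the paper introduces a \emph{second}, more widely supported cutoff $\widetilde\eta_t$ (vanishing on $\Omega_{7t}$) for the dual remainder, so that $v_{0,\lm}\widetilde\eta_t$ and $\zeta_{\va,\lm}$ are identically zero on $\Omega_{5t}$ and the boundary-strip terms of \eqref{3.1-0} drop out of $J_2$ and $J_3$ for free; you keep the same $\eta_t$ for both remainders and therefore must carry those terms through and control $\|\nabla\widetilde r_{\va,\lm}\|_{L^2(\Omega_{5t})}\lesssim t^{1/2}\|v_{0,\lm}\|_{H^2}$ and $\|\Delta\widetilde r_{\va,\lm}\|_{L^2(\Omega_{5t})}\lesssim t^{-1/2}\|v_{0,\lm}\|_{H^2}$ via \eqref{241-0}--\eqref{241-1}. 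This is the bookkeeping burden you correctly flag at the end; it does close (each contribution comes out $\le C(1+\lm)\va$ after using $\va<(1+\lm)^{-1}$), so both routes work, but the paper's choice of a larger cutoff for the dual test function is the cleaner way to handle it.
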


\begin{proof}
For $\widetilde{F } \in C_0^\infty(\Omega; \R^d)$, let $v_{\va, \lm}\in H^2_0(\Omega; \R^d)$ be the weak solution of
$\mathcal{L}^\lm_{\va} (v_{\va, \lm})=\widetilde{F}$ in $\Omega$ and
$v_{0, \lm}$ the solution in $H^1_0(\Omega; \R^d)$ of the homogenized problem
$-\text{\rm div} (\widehat{A^\lm} \nabla  v_{0, \lm})=\widetilde{F} $ in $\Omega$.
Note that since $\Omega$ is $C^{1, 1}$, we have
$\|v_{0, \lambda} \|_{H^2(\Omega)} \le C \| \widetilde{F} \|_{L^2(\Omega)}$
and 
$$
\|u_{0, \lm} \|_{H^2(\Omega)}
\le C \big\{ \| F\|_{L^2(\Omega)} + \| G \|_{H^2(\Omega)} \big\},
$$
where $C$ depends only on $d$, $\nu_1$, $\nu_2$, and $\Omega$.
Let
\begin{equation}\label{w-e-2}
\widetilde{w}_{\va, \lm}
=v_{\va, \lm} - v_{0, \lambda} \widetilde{\eta}_t
-\va \chi^\lm (x/\va) \widetilde{\eta}_t S_\va (\nabla v_{0, \lm}),
\end{equation}
where $ t=(1+\lm)\va$ and
$\widetilde{\eta}_t$ is a function in $C_0^\infty (\Omega)$ such that
$0\le \widetilde{\eta}_t \le 1$,
$|\nabla^k \widetilde{\eta}_t |\le C t^{-k}$ for $k=1, 2$,
 $\widetilde{\eta}_t (x)=1$ if $x\in \Omega\setminus \Omega_{8t}$, and 
$\widetilde{\eta}_t (x) =0$ if $x\in \Omega_{7t}$.

Let $w_{\va, \lm}$ be given by (\ref{w-e-1}).
Note that
\begin{equation}\label{3.3-1}
\aligned
\Big|\int_\Omega
w_{\va, \lm} \cdot \widetilde{F} \, dx \Big|
&= |\langle \mathcal{L}_\va^\lm (w_{\va, \lm}), v_{\va, \lm} \rangle |\\
&\le
 |\langle \mathcal{L}_\va^\lm (w_{\va, \lm}), \widetilde{w}_{\va, \lm} \rangle |
 + |\langle \mathcal{L}_\va^\lm (w_{\va, \lm}),  v_{0, \lm} \widetilde{\eta}_t  \rangle |
 +  |\langle \mathcal{L}_\va^\lm (w_{\va, \lm}), \zeta_{\va, \lm}  \rangle |\\
 & =J_1 + J_2+J_3,
 \endaligned
 \end{equation}
 where
 \begin{equation}\label{3.3-2}
 \zeta_{\va, \lm}= 
 \va \chi^\lm (x/\va) \widetilde{\eta}_t S_\va (\nabla v_{0, \lm}).
 \end{equation}
Observe that 
\begin{equation}\label{3.3-3}
\aligned
J_1  &\le   \va^2 \lm^2 \|\Delta w_{\va, \lm} \|_{L^2(\Omega)}
\|\Delta \widetilde{w}_{\va, \lm} \|_{L^2(\Omega)}
+ C \|\nabla w_{\va, \lm } \|_{L^2(\Omega )}
\| \nabla \widetilde{w}_{\va, \lm} \|_{L^2(\Omega)}\\
& \le C (1+\lm) \va 
\big\{ \| u_{0, \lm} \|_{H^2(\Omega)}  + \|G\|_{H^2(\Omega)}  \big\} \| v_{0, \lm} \|_{H^2(\Omega)},
\endaligned
\end{equation}
where we have used (\ref{3.2-0}) for the last inequality.
To bound $J_2$, we use (\ref{3.1-0}) to obtain 
\begin{equation}\label{3.3-4-0}
\aligned
J_2
& \le C (1+\lm) \va  \big\{  \| u_{0, \lm} \|_{H^2(\Omega)}  + \| G\|_{H^2(\Omega)} \big\} \| v_{0, \lm} \|_{H^2(\Omega)}.
\endaligned
\end{equation}
To handle $J_3$, we note that by (\ref{echilm}) and (\ref{22}),
\begin{align}
\|\nabla \zeta_{\va, \lm} \|_{L^2(\Omega)} 
 & \le C (1+\lm)^{-2}  \| v_{0, \lm}\|_{H^2(\Omega)}, \label{3.3-4}\\
 \|\Delta \zeta_{\va, \lm} \|_{L^2(\Omega)}
  & \le C \va^{-1} (1+\lm)^{-2}
 \| v_{0, \lm} \|_{H^2(\Omega)}. \label{3.3-4-1}
\end{align}
Since $\zeta_{\va, \lm} =0$ in $\Omega_{5t}$,
it follows from (\ref{3.1-0}) that
\begin{equation}\label{3.3-5}
\aligned
J_3
&\le C \va \| u_{0, \lm} \|_{H^2(\Omega)}
\|\nabla \zeta_{\va, \lm} \|_{L^2(\Omega)}
+ C \va^2  \lm^2 \| u_{0, \lm} \|_{H^2(\Omega)}
\|\Delta  \zeta_{\va, \lm} \|_{L^2(\Omega)}\\
&\le  C \va \| u_{0, \lm}\|_{H^2(\Omega)}
\| v_{0,\lm}\|_{H^2(\Omega)}.
\endaligned
\end{equation}
In view of (\ref{3.3-1}), (\ref{3.3-3}), (\ref{3.3-4-0}) and (\ref{3.3-5}), we have proved that
$$
\aligned
\Big|\int_\Omega
w_{\va, \lm} \cdot \widetilde{F} \, dx \Big|
 & \le C (1+\lm)  \va  \big\{ \| u_{0, \lm}\|_{H^2(\Omega)} + \|G\|_{H^2(\Omega) } \big\} 
\| v_{0,\lm}\|_{H^2(\Omega)}\\
& \le C (1+\lm)  \va \big\{   \| F\|_{L^2(\Omega)} + \| G\|_{H^2(\Omega)}  \big\} 
\| \widetilde{F}  \|_{L^2(\Omega)}.
\endaligned
$$
By duality this implies that
$$
\| w_{\va, \lm} \|_{L^2(\Omega)}
\le C (1+\lambda) \va  \big\{ \| F \|_{L^2(\Omega)} +  \| G \|_{H^2(\Omega)} \big\}.
$$
Hence,
$$
\aligned
\| u_{\va, \lm} -u_{0, \lm}\|_{L^2(\Omega)}
& \le \| w_{\va, \lm}  \|_{L^2(\Omega)}
+\|  (u_{0, \lm} -G ) (1-\eta_t) \|_{L^2(\Omega)}\\
&\qquad\qquad\qquad
+ \|\va \chi^\lm (x/\va) \eta_t S_\va (\nabla u_{0, \lm})\|_{L^2(\Omega)}\\
&
\le C (1+\lambda) \va
\big\{ \| F \|_{L^2(\Omega)} +  \| G \|_{H^2(\Omega)} \big\},
\endaligned
$$
which completes the proof.
\end{proof}

We are now ready  to prove Theorem \ref{coth1}.

\begin{proof}[\textbf{Proof of Theorem \ref{coth1}}] 

Let $u_\va\in H^2(\Omega; \R^d)$ be a weak solution of $\mathcal{L}_\va (u_\va)=F$ in $\Omega$ with $u_\va -G \in H_0^2(\Omega)$, and
$u_0\in H^1 (\Omega; \R^d)$ the solution of the homogenized equation $-\text{\rm div}(\widehat{A} \nabla u_0)=F$ in $\Omega$
with $u_0-G \in H_0^1(\Omega; \R^d)$.
Let $\lm=\kappa/\va$. Then $\mathcal{L}_\va^\lm (u_\va) = \mathcal{L}_\va (u_\va)=F$ in $\Omega$.
Let $u_{0, \lm}\in H^1(\Omega; \R^d)$ be the solution of  $-\text{\rm div}(\widehat{A^\lambda} \nabla u_{0, \lm})=F$ in $\Omega$
with $u_{0, \lm}-G \in H_0^1(\Omega; \R^d)$.
Note that
\begin{equation}\label{3-10-0}
\aligned
\| u_\va -u_0\|_{L^2(\Omega)}
 & \le \| u_\va - u_{0, \lm}\|_{L^2(\Omega)}
+\| u_{0, \lm} -u_0\|_{L^2(\Omega)}\\
& \le 
C  (\kappa +\va) \big\{ \| F\|_{L^2(\Omega)} +\|G\|_{H^2(\Omega)} \big\} 
+ \| u_{0, \lm} -u_0\|_{L^2(\Omega)},
\endaligned
\end{equation}
where we have used Theorem \ref{thm-lm} for the last inequality.
To estimate $u_{0, \lm} -u_0$, we observe that $u_{0, \lm} -u_0\in H_0^1(\Omega; \R^d)$ and
$$
-\text{\rm div} (\widehat{A} \nabla (u_0 -u_{0, \lambda}))
=\text{\rm div} ( (\widehat{A} -\widehat{A^\lm})\nabla u_{0, \lm})
$$
in $\Omega$.
By energy estimates,
$$
\aligned
\|u_0-u_{0, \lm}\|_{H^1(\Omega)}
 & \le C  |\widehat{A} -\widehat{A^\lm}| \| \nabla u_{0, \lm}\|_{L^2(\Omega)}\\
& \le C   |\widehat{A} -\widehat{A^\lm}|  \big\{ \|  F \|_{L^2(\Omega)}
+ \| G \|_{H^1(\Omega)} \big\} ,
\endaligned
$$
where $C$ depends only on $d$, $\nu_1$,  $\nu_2$, and $\Omega$.
This, together with Lemma \ref{ith21} and (\ref{3-10-0}), gives (\ref{re1}).
\end{proof}


\section{Approximation}\label{section-app}

Fix $0<\lm<\infty$.
Let $\mathcal{L}^\lm_{\va}$  be defined as in \eqref{llam}.
The goal of this section is to establish the following.

\begin{theorem}\label{apth2}
 Suppose $A$ satisfies \eqref{econ}  and  \eqref{pcon}. 
 Let $u_{\va, \lm} \in H^2(B_{2r}; \R^d )$ be a solution to $\mathcal{L}^\lm_{\va}   ( u_{\va, \lm}) =F$ in $B_{2r}$, 
 where $F\in L^p(B_{2r}; \R^d)$ and $B_{2r} =B(z, 2r)$ for some $z\in \mathbb{R}^d$.
 Assume  that $p>d$ and $\va\le  r<\infty$.
  Then there exists  $v_{\va , \lm}\in H^2(B_r; \R^d)$ such that  
  \begin{equation}\label{L-0-lm}
  \va^2 \lm^2 \Delta^2 v_{\va, \lm} -\text{\rm div} \big( \widehat{A^\lm} \nabla v_{\va, \lm}\big) =F \quad \text{ in } B_r,
  \end{equation}
  and
   \begin{align}
&  \qquad   \left(\fint_{B_r} |\nabla v_{\va, \lm} |^2 \right)^{1/2}
   \le C \left(\fint_{B_{2r}} |\nabla u_{\va, \lm}|^2 \right)^{1/2}, \label{s-ap-0}\\
    &\left (\fint_{B_r}|\na u_{\va, \lm }-\na v_{\va, \lm}-(\na\chi^\lm)(x/\va) \na v_{\va, \lm}|^2\right)^{1/2}\nonumber\\
    &\leq C \left(\frac{\va }{r}\right)^{^\sigma} \left\{ \left(\fint_{B_{2r}}|\na u_{\va, \lm} |^2\right)^{1/2}+r \left(\fint_{B_{2r}}|F|^p\right)^{1/p}\right\},\label{apth2re1}
  \end{align}
  where $C>0$ and $0<\sigma<1$ depend only on $d$, $\nu_1 $, $\nu_2$, and $p$.
\end{theorem}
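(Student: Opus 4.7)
My plan is to define $v_{\va,\lm}$ as the unique weak solution of the intermediate equation \eqref{L-0-lm} on $B_r$ with matching boundary data, namely $v_{\va,\lm}-u_{\va,\lm}\in H_0^2(B_r;\R^d)$. The crucial algebraic observation is that, because $u_{\va,\lm}$ and $v_{\va,\lm}$ satisfy equations with the \emph{same} right-hand side $F$, their difference satisfies
\begin{equation*}
\va^2\lm^2\Delta^2(v_{\va,\lm}-u_{\va,\lm})-\text{\rm div}\big(\widehat{A^\lm}\nabla(v_{\va,\lm}-u_{\va,\lm})\big)=-\text{\rm div}\big((A(x/\va)-\widehat{A^\lm})\nabla u_{\va,\lm}\big),
\end{equation*}
with no $F$ on the right. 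Testing against $v_{\va,\lm}-u_{\va,\lm}$, using the elasticity of $\widehat{A^\lm}$ from Lemma~\ref{ela-lemma} together with Korn's inequality, yields $\|\nabla(v_{\va,\lm}-u_{\va,\lm})\|_{L^2(B_r)}\le C\|\nabla u_{\va,\lm}\|_{L^2(B_r)}$, from which \eqref{s-ap-0} follows by the triangle inequality and a comparison of $L^2$ means over $B_r$ and $B_{2r}$.

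For the quantitative approximation \eqref{apth2re1} I would imitate the two-scale argument of Lemma~\ref{lemma-3.1} in the interior. Take $t\sim\va$ and $\eta_t\in C_0^\infty(B_r)$ with $\eta_t\equiv 1$ on $B_{r-5t}$, $\mathrm{supp}\,\eta_t\subset B_{r-4t}$, $|\nabla^k\eta_t|\le Ct^{-k}$, and set
\begin{equation*}
w=u_{\va,\lm}-v_{\va,\lm}-\va\,\chi^\lm(x/\va)\,\eta_t\,S_\va(\nabla v_{\va,\lm})\in H_0^2(B_r;\R^d).
\end{equation*}
The decisive cancellation is that $\mathcal{L}_\va^\lm(u_{\va,\lm}-v_{\va,\lm})=\text{\rm div}\big((A(x/\va)-\widehat{A^\lm})\nabla v_{\va,\lm}\big)$ carries \emph{no} residual fourth-order remainder $-\va^2\lm^2\Delta^2 v_{\va,\lm}$, precisely because $v_{\va,\lm}$ solves the intermediate equation rather than the purely second-order homogenized one. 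Proceeding exactly as in Lemma~\ref{lemma-3.1}, with $v_{\va,\lm}$ playing the role of $u_{0,\lm}$ (and with $G\equiv 0$), invoking the flux corrector $\mathfrak{B}^\lm$ (whose $H^1$ norm is uniformly bounded thanks to \eqref{echilm}--\eqref{eblm}) and the smoothing estimates of Lemma~\ref{lsmooth1}, I expect the bound
\begin{equation*}
|\langle\mathcal{L}_\va^\lm(w),\psi\rangle|\le C\,\|\nabla v_{\va,\lm}\|_{L^2(B_r\setminus B_{r-5t})}\|\nabla\psi\|_{L^2(B_r)}+C\va\,\|\nabla^2 v_{\va,\lm}\|_{L^2(B_r)}\|\nabla\psi\|_{L^2(B_r)},
\end{equation*}
uniformly in $\lm$; taking $\psi=w$ and using Korn controls $\|\nabla w\|_{L^2(B_r)}$ by the right-hand side.

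To extract the $(\va/r)^\sigma$ rate, I would apply the reverse H\"older inequality (Theorem~\ref{RH-thm}) to $u_{\va,\lm}$ on $B_{2r}$ and the global Meyers estimate (Theorem~\ref{Meyers-thm}) to $v_{\va,\lm}-u_{\va,\lm}\in H_0^2(B_r;\R^d)$, whose equation has the constant coefficient $\widehat{A^\lm}$. Together these furnish a quantitative $L^{p_0}$ bound on $\nabla v_{\va,\lm}$ over $B_r$, for some $p_0>2$ depending only on $d$, $\nu_1$, $\nu_2$, $p$; H\"older's inequality on the boundary strip of volume $\lesssim\va r^{d-1}$ then produces the desired factor $(\va/r)^{1/2-1/p_0}$ (the trivial range $r\lesssim\va$ is handled directly by Caccioppoli). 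The interior remainder $\va\|\nabla^2 v_{\va,\lm}\|_{L^2}$ is controlled by Caccioppoli applied to $v_{\va,\lm}$ itself, and passing from $\|\nabla w\|_{L^2}$ back to $\|\nabla u_{\va,\lm}-\nabla v_{\va,\lm}-(\nabla\chi^\lm)(x/\va)\nabla v_{\va,\lm}\|_{L^2}$ generates the two further terms $(\nabla\chi^\lm)(x/\va)(\eta_t S_\va(\nabla v_{\va,\lm})-\nabla v_{\va,\lm})$ and $\va\chi^\lm(x/\va)\nabla[\eta_t S_\va(\nabla v_{\va,\lm})]$, estimated via Meyers-type regularity of $\chi^\lm$ and \eqref{22}. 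The main obstacle is the bookkeeping of the fourth-order corrector contributions, with prefactors $\va\lm^2$, $\va^2\lm^2$, $\va^3\lm^2$ multiplying $\nabla^2\chi^\lm$, $\nabla\chi^\lm$, $\chi^\lm$ respectively; the scalings in \eqref{echilm} are exactly what render each contribution uniformly bounded in $\lm$, and without the cancellation furnished by the intermediate equation this uniformity in $\lm$ would be lost.
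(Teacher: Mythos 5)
Your overall strategy is the right one and matches the paper's in spirit: introduce the intermediate constant-coefficient operator $\va^2\lm^2\Delta^2-\mathrm{div}(\widehat{A^\lm}\nabla)$ so that the singular fourth-order perturbation cancels in the difference equation, run the two-scale argument of Lemma~\ref{le1}, control the boundary-strip and interior $\nabla^2 v$ contributions via H\"older/Meyers/reverse-H\"older, and then pass from $\nabla w$ to the target quantity. However, there is a genuine gap in one crucial place: \emph{where} you pose the intermediate Dirichlet problem.

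You define $v_{\va,\lm}$ on $B_r$ with $v_{\va,\lm}-u_{\va,\lm}\in H_0^2(B_r;\R^d)$, but the estimate \eqref{apth2re1} is also over $B_r$. When you pass back from $\|\nabla w\|_{L^2(B_r)}$ to $\|\nabla u_{\va,\lm}-\nabla v_{\va,\lm}-(\nabla\chi^\lm)^\va\nabla v_{\va,\lm}\|_{L^2(B_r)}$, you pick up the term $(\nabla\chi^\lm)^\va(\eta_t S_\va(\nabla v_{\va,\lm})-\nabla v_{\va,\lm})$. On the boundary strip $B_r\setminus B_{r-5t}$ where $\eta_t\equiv 0$, this reduces to $-(\nabla\chi^\lm)^\va\nabla v_{\va,\lm}$. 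Neither factor is bounded: $(\nabla\chi^\lm)^\va$ is only in $L^{p_1}$ for some $p_1>2$ close to $2$ (reverse H\"older for the corrector), and $\nabla v_{\va,\lm}$ near $\partial B_r$ is only in $L^{p_2}$ for some $p_2>2$ close to $2$ (Meyers with Dirichlet data). The product then lands in $L^s$ with $1/s=1/p_1+1/p_2$, and since both Meyers gains are small, generically $s<2$ — so the $L^2$ norm of this term on the strip is not controlled. Your proposed "Meyers-type regularity of $\chi^\lm$ plus \eqref{22}" does not close this estimate.

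The paper sidesteps this entirely by posing the intermediate Dirichlet problem on the \emph{strictly larger} ball $B_{3r/2}$ (after reducing to $r=1$, on $B_{3/2}$) and then estimating over $B_1=B_r$, where the cut-off $\eta_\va\equiv 1$. Then $v_{\va,\lm}$ enjoys an interior $C^{1,\sigma}$ estimate on $B_{5/4}$ (Theorem~\ref{loc-thm} applied to the constant-coefficient operator, uniform in $\lm\va$), so $\nabla v_{\va,\lm}\in L^\infty(B_1)$ and $\|(\nabla\chi^\lm)^\va(S_\va(\nabla v_{\va,\lm})-\nabla v_{\va,\lm})\|_{L^2(B_1)}\le\|(\nabla\chi^\lm)^\va\|_{L^2(B_1)}\|S_\va(\nabla v_{\va,\lm})-\nabla v_{\va,\lm}\|_{L^\infty(B_1)}\lesssim\va^\sigma\|\nabla v_{\va,\lm}\|_{C^{0,\sigma}(B_{5/4})}$. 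This $L^2\times L^\infty$ pairing is what makes the two-scale remainder small. Relatedly, the interior remainder $\va\|\nabla^2 v_{\va,\lm}\|_{L^2}$ appears in Lemma~\ref{le1} localized to $\Omega\setminus\Omega_{2\va}$, and the paper controls it via a distance-weighted Caccioppoli $\int|\nabla^2 v|^2\lesssim\int|\nabla v|^2/\delta^2+\int|F|^2$ followed by H\"older in the radial variable; a plain Caccioppoli applied at unit scale, as your sketch suggests, is not enough, while one applied at scale $\delta(x)$ (the Whitney argument) is exactly what gives $\va^{1/2-1/q}$. Your proposal is salvageable if you move the Dirichlet problem to $B_{3r/2}$ and keep the estimate over $B_r$; as written, the boundary strip term is not under control.
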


To prove Theorem \ref{apth2},  we introduce  an intermediate  Dirichlet problem,
\begin{align}\label{eqv0}
\lm^2  \varepsilon^{2}\Delta^{2}v_{\va, \lm}   + \mathcal{L}^\lm_0 ( v_{\va, \lm} )   =F \quad 
\text{ in } \Omega \quad \text{ and } \quad
 v_{\va, \lm} -G \in H^2_0(\Omega; \R^d),
\end{align}
where $\mathcal{L}_0^\lm =-\text{\rm div} (\widehat{A^\lm}\nabla )$ and $\widehat{A^\lm}$ is defined  by (\ref{alam}).
We will establish a (suboptimal) convergence rate in $H^1(\Omega)$ for $u_{\va, \lm} - v_{\va, \lm}$, where  $u_{\va, \lm}$ is the solution to 
the Dirichlet problem,
\begin{equation}\label{leq0}
\mathcal{L}_{\va}^ \lambda  ( u_{\va, \lambda}  ) = F \quad  \text{ in } \Omega  \quad 
\text{ and } \quad u_{\va, \lm} -G \in H^2_0(\Omega; \R^d),
\end{equation}
with $F\in L^2(\Omega; \R^d)$ and $G\in H^2(\Omega; \R^d)$.
Let
\begin{equation}\label{w-e-11}
w_{\va, \lm}
=u_{\va, \lm} -v_{\va, \lm} -\va \chi^\lm (x/\va) \eta_\va S_\va (\nabla v_{\va, \lm}),
\end{equation}
where $\eta_\va$, $S_\va$ and $\chi^\lm$ are the same as in (\ref{w-e-1}).

\begin{lemma}\label{le1}
Let $\Omega$ be a bounded Lipschitz domain. 
Let $u_{\va, \lm} , v_{\va,\lm} $ be the weak solutions of \eqref{leq0} and \eqref{eqv0}, respectively, and $w_{\va, \lm}$ be given by \eqref{w-e-11}. Then
\begin{align}
\label{conver_es_w1_psi}
  \lm \va\| \Delta w_{\va, \lm}\|_{L^2(\Omega)} + \| \na w_{\va,\lm} \|_{L^2(\Omega)}
 &\le C\|  \nabla v_{\va, \lm}   \|_{L^{2}(\Omega_{5\varepsilon})} +C\varepsilon\| \nabla^{2}v_{\va, \lm} \|_{L^{2}(\Omega\setminus \Omega_{2\varepsilon})}
\end{align}
for $0<\va<1, $ where  $C$ depends only on $d$, $\nu_1$, $\nu_2$, and $\Omega$.
\end{lemma}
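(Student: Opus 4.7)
The plan is to mimic the proof of Lemma \ref{lemma-3.1}, exploiting the crucial fact that the bi-Laplacian term $\lm^2\va^2\Delta^2$ appears in both $\mathcal{L}_\va^\lm$ and the intermediate operator $\lm^2\va^2\Delta^2+\mathcal{L}_0^\lm$, so it cancels when comparing $u_{\va,\lm}$ and $v_{\va,\lm}$. Moreover, since $u_{\va,\lm}$ and $v_{\va,\lm}$ carry the same boundary datum $G$, the boundary-layer errors attached to $G$ in Lemma \ref{lemma-3.1} disappear, and only the boundary layer associated with $v_{\va,\lm}$ itself (coming from the cut-off $\eta_\va$) remains.

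First, I would verify $w_{\va,\lm}\in H_0^2(\Omega;\R^d)$: the difference $u_{\va,\lm}-v_{\va,\lm}$ lies in $H_0^2(\Omega;\R^d)$, and $\eta_\va$ is compactly supported in $\Omega$, so the corrector piece $\va\chi^\lm(x/\va)\eta_\va S_\va(\nabla v_{\va,\lm})$ is also in $H_0^2$. Next I would compute $\mathcal{L}_\va^\lm(w_{\va,\lm})$. Subtracting $\mathcal{L}_\va^\lm(v_{\va,\lm})=F+\operatorname{div}\bigl((\widehat{A^\lm}-A(x/\va))\nabla v_{\va,\lm}\bigr)$ from $\mathcal{L}_\va^\lm(u_{\va,\lm})=F$ gives
\[
\mathcal{L}_\va^\lm(u_{\va,\lm}-v_{\va,\lm})=-\operatorname{div}\bigl((\widehat{A^\lm}-A(x/\va))\nabla v_{\va,\lm}\bigr),
\]
with no $\lm^2\va^2\Delta^2 v_{\va,\lm}$ residue. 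Then I would expand $\mathcal{L}_\va^\lm[\va\chi^\lm(x/\va)\eta_\va S_\va(\nabla v_{\va,\lm})]$ using the Leibniz rule and the corrector identity $\mathcal{L}_\va^\lm(P_j^\beta+\va\chi_j^{\lm,\beta}(x/\va))=0$. After combining this with the divergence-form error above, the main bulk term reorganises, exactly as in the proof of Lemma \ref{lemma-3.1}, into a quantity of the form $\operatorname{div}\bigl(B^\lm(x/\va)\eta_\va S_\va(\nabla v_{\va,\lm})\bigr)$ plus higher-order correction terms, where $B^\lm$ is the 1-periodic flux from (\ref{B}).

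The main technical step, which I expect to be the principal obstacle, is turning the bulk term involving $B^\lm(x/\va)\eta_\va S_\va(\nabla v_{\va,\lm})$ into something controllable. For this I would invoke the flux corrector $\mathfrak{B}^\lm_{kij}$ satisfying $\partial_{y_k}\mathfrak{B}^\lm_{kij}=B^\lm_{ij}$, $\mathfrak{B}^\lm_{kij}=-\mathfrak{B}^\lm_{ikj}$, $\|\mathfrak{B}^\lm\|_{H^1(Y)}\le C$, recorded in the proof of Lemma \ref{lemma-3.1}. Antisymmetry lets me pull out one $\va$ factor and move a derivative onto $\eta_\va S_\va(\nabla v_{\va,\lm})$, which either produces a factor $\va|\nabla^2 S_\va(\nabla v_{\va,\lm})|$ in the interior region (where $\eta_\va\equiv 1$ so only $\va^{-1}$-type derivatives land on $S_\va$) or a factor $|\nabla\eta_\va|\lesssim \va^{-1}$ localised to $\Omega_{5\va}\setminus\Omega_{3\va}$. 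The higher-order corrector terms (involving $\Delta\chi^\lm$, $\nabla^2\chi^\lm$, $\chi^\lm$ paired with derivatives of $\eta_\va S_\va(\nabla v_{\va,\lm})$) are handled by the bounds (\ref{echilm}) on $\chi^\lm$ together with the smoothing bound (\ref{22}).

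Assembling, I obtain, for any $\psi\in H_0^2(\Omega;\R^d)$, an inequality of the form
\[
|\langle\mathcal{L}_\va^\lm(w_{\va,\lm}),\psi\rangle|\le C\bigl\{\|\nabla v_{\va,\lm}\|_{L^2(\Omega_{5\va})}+\va\|\nabla^2 v_{\va,\lm}\|_{L^2(\Omega\setminus\Omega_{2\va})}\bigr\}\bigl\{\|\nabla\psi\|_{L^2(\Omega)}+\lm\va\|\Delta\psi\|_{L^2(\Omega)}\bigr\},
\]
which is the analogue of (\ref{3.1-0}) but with the $G$-terms absent and the $\lm^2\va^2\Delta^2 u_0$ term missing. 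Finally, taking $\psi=w_{\va,\lm}\in H_0^2(\Omega;\R^d)$, invoking the first Korn inequality, and absorbing the quadratic terms in $\lm\va\|\Delta w_{\va,\lm}\|_{L^2(\Omega)}+\|\nabla w_{\va,\lm}\|_{L^2(\Omega)}$ via Cauchy's inequality, I obtain the stated estimate (\ref{conver_es_w1_psi}).
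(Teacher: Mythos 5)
Your proposal is correct and follows essentially the same route as the paper: compute $\mathcal{L}^\lm_\va(w_{\va,\lm})$ using the cancellation of the $\lm^2\va^2\Delta^2$ term and the corrector identity, convert the $B^\lm$ bulk term with the antisymmetric flux correctors $\mathfrak{B}^\lm_{kij}$, bound the remaining terms via \eqref{echilm} and \eqref{22}, and take $\psi=w_{\va,\lm}$ with Cauchy and the first Korn inequality. The only inessential difference is that the paper writes the entire residual under a single divergence, so its intermediate pairing bound uses only $\|\nabla\psi\|_{L^2(\Omega)}$, whereas you retain a harmless extra $\lm\va\|\Delta\psi\|_{L^2(\Omega)}$ factor that is absorbed in the same Cauchy step.
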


\begin{proof}

The proof is similar to that of (\ref{3.2-0}).
Let $(g)^\va =g(x/\va)$.
By direct calculations, we deduce that
\begin{align}
\label{conver_iden_L1}
\mathcal{L}^\lm_{\varepsilon} ( w_{\va, \lm})
&=\mathcal{L}^\lm_{0} (v_{\va, \lm}) 
+\lambda^2 \varepsilon^2 \Delta^{2}v_{\va, \lm}
-\mathcal{L}^\lm_{\varepsilon} (v_{\va, \lm} )
-\mathcal{L}^\lm_{\varepsilon}\big( \varepsilon (\chi^\lm)^\va  S_{\varepsilon} (\nabla v_{\va, \lm} )\eta_{\varepsilon} \big)\nonumber\\&
=-\textrm{div}\Big\{\widehat{A^\lm}\nabla v_{\va, \lm}
-A^\va \nabla v_{\va, \lm}
+\lambda^2 \varepsilon^3\Delta\nabla[  (\chi^\lm)^\va S_{\varepsilon} (\nabla v_{\va, \lm})\eta_{\varepsilon}]\nonumber\\
& \qquad\qquad\qquad
-\va A^\va \nabla[ (\chi^\lm)^\va
 S_{\varepsilon} (\nabla v_{\va, \lm} )\eta_{\varepsilon}]\Big\}\nonumber\\&
=-\textrm{div}\Big\{(A^\va -\widehat{A^\lm})[S_{\varepsilon} (\nabla v_{\va, \lm})\eta_{\varepsilon}-\nabla v_{\va, \lm} ] 
+(B^\lm)^\va S_{\varepsilon} (\nabla v_{\va, \lm})\eta_{\varepsilon}\nonumber\\
&\qquad
+\lambda^2 \varepsilon^3\Delta\nabla[S_{\varepsilon} (\nabla v_{\va, \lm})\eta_{\varepsilon}] (\chi^\lm)^\va
+\lambda^2 \varepsilon^2 (\nabla \chi^\lm)^\va \Delta[S_{\varepsilon} (\nabla v_{\va, \lm} )\eta_{\varepsilon}]\nonumber \\
& \qquad
+2\lambda^2 
\varepsilon^2 \nabla^{2}[S_{\varepsilon} (\nabla v_{\va, \lm})\eta_{\varepsilon}] (\nabla \chi^\lm)^\va  
+\lambda^2 \varepsilon (\Delta \chi^\lm)^\va \nabla[S_{\varepsilon} (\nabla v_{\va, \lm})\eta_{\varepsilon}]\nonumber \\
 & \qquad+2\lm^2 \varepsilon (\nabla^2 \chi^\lm)^\va \nabla[S_{\varepsilon} (\nabla v_{\va, \lm} )\eta_{\varepsilon}]
 -\varepsilon A^\va (\chi^\lm)^\va\nabla[S_{\varepsilon} (\nabla v_{\va, \lm})\eta_{\varepsilon}]\Big\},
\end{align} 
where $(B^\lm)^\va=B^\lm(x/\va)$  and $B^\lm$ is given by (\ref{B}).
Thus for any $\psi\in H^2_0(\Omega; \R^d)$,
\begin{align}
\label{rate w psi}
|\langle\mathcal{L}^\lm_{\varepsilon} ( w_{\varepsilon, \lm}),~\psi\rangle|&
\leq C\int_{\Omega}\big|[ \nabla v_{\va, \lm}   -S_{\varepsilon}( \nabla v_{\va, \lm}   )\eta_{\varepsilon}]\nabla \psi\big|\,dx
 +C\Big|\int_{\Omega}(B^\lm)^\va\eta_{\varepsilon}S_{\varepsilon}( \nabla v_{\va, \lm}   )\nabla \psi\,dx\Big|\nonumber\\
&\quad+C\lm^2\varepsilon^{3}\int_{\Omega}\big|
(\chi^\lm)^\va \nabla^3 [S_{\varepsilon}( \nabla v_{\va, \lm}   )\eta_{\varepsilon}] \nabla \psi\big| \,dx\nonumber\\
  &\quad+C\lm^2 \varepsilon^{2}\int_{\Omega} \big| (\nabla \chi^\lm)^\va\nabla^{2}[S_{\varepsilon}( \nabla v_{\va, \lm}   )\eta_{\varepsilon}]\nabla \psi \big| \,dx \nonumber\\
&\quad+C\lm^2\varepsilon \int_{\Omega} \big| (\nabla^2 \chi^\lm)^\va \nabla[S_{\varepsilon}( \nabla v_{\va, \lm}   )\eta_{\varepsilon}]\nabla\psi \big| \,dx\ \nonumber\\
&\quad+C \varepsilon\int_{\Omega} \big|(\chi^\lm)^\va\nabla[S_{\varepsilon}( \nabla v_{\va, \lm}   )\eta_{\varepsilon}]\nabla\psi\big|\,dx\nonumber\\
&
\doteq \mathcal{I}_1+\cdot\cdot\cdot+\mathcal{I}_{6}.
\end{align}
It is not hard  to see  that
$$
\aligned
\mathcal{I}_1&\leq
C\| \nabla v_{\va, \lm}   -S_{\varepsilon}( \nabla v_{\va, \lm}   )\|_{L^{2}(\Omega\setminus \Omega_{3\varepsilon})}\|\nabla \psi\|_{L^{2}(\Omega)}
+C\| \nabla v_{\va, \lm}  \|_{L^2(\Omega_{4\va})}  \| \nabla \psi \|_{L^2(\Omega_{4\va})}\\
&\le C \big\{ \|\nabla v_{\va, \lm}\|_{L^2(\Omega_{4\va})}
+ C \va \|\nabla^2 v_{\va, \lm} \|_{L^2(\Omega\setminus \Omega_{2\va})} \big\} 
\|\nabla \psi \|_{L^2(\Omega)}.
\endaligned
$$
 To handle $\mathcal{I}_2$, we use the matrix of flux correctors, as in the proof of Lemma \ref{lemma-3.1}, to obtain 
\begin{align*}
\mathcal{I}_2&= C \Big|\int_\Omega\varepsilon   \partial_{x_k}\big(\mathfrak{B}^\lm_{kij} (x/\va) \partial_{x_i}  \psi \big)S_{\varepsilon}( \partial_{x_j} v_{\va, \lm} )\eta_{\varepsilon} dx
\Big| \nonumber\\
 &\leq C\varepsilon\int_{\Omega}|\eta_{\varepsilon}\mathfrak{B}^\lm(x/\varepsilon) S_{\varepsilon}(\nabla^{2} v_{\va, \lm} )\nabla \psi|\,dx
 +C\varepsilon\int_{\Omega} |\mathfrak{B}^\lm(x/\varepsilon) S_{\varepsilon}( \nabla v_{\va, \lm}   )\nabla\eta_{\varepsilon}  \nabla \psi|\,dx\nonumber\\
 &\leq C\varepsilon\|\nabla \psi\|_{L^{2}(\Omega)}\| \nabla^{2} v_{\va, \lm} \|_{L^{2}(\Omega\setminus \Omega_{2\varepsilon})}
 +C \|\nabla \psi\|_{L^{2}(\Omega_{4\varepsilon})}\| \nabla v_{\va, \lm}    \|_{L^{2}(\Omega_{5\varepsilon})},
\end{align*}
where, for the last step, we have used \eqref{22}.

To bound $\mathcal{I}_3$, we  use the Cauchy  inequality, \eqref{echilm} and \eqref{22} to deduce that
\begin{align*}
\mathcal{I}_3&\leq C \lm^2 \va^3\| ( \chi^\lm)^\va \nabla^3 S_\va( \nabla v_{\va, \lm} ) \|_{L^2(\Omega\setminus\Omega_{3\va})} \|\na \psi \|_{L^2(\Omega)}\\
 & +C \lm^2 \Big\{\|S_\va(\na v_{\va, \lm} ) ( \chi^\lm)^\va \|_{L^2(\Omega_{4\va})} + \va \|S_\va(\na^2 v_{\va, \lm} ) ( \chi^\lm)^\va \|_{L^2(\Omega_{4\va})}\\
& \qquad\qquad\qquad
 + \va^2\|\nabla S_\va(\na^2 v_{\va, \lm} ) ( \chi^\lm)^\va \|_{L^2(\Omega_{4\va})}  \Big\}\|\na \psi \|_{L^2(\Omega_{4\va})}\\
 &\leq    C\varepsilon\| \nabla^{2}v_{\lm, \va} \|_{L^{2}(\Omega\setminus \Omega_{2\varepsilon})}\|\nabla \psi\|_{L^{2}(\Omega)}
 + C\| \nabla v_{\va, \lm}   \|_{L^{2}(\Omega_{5\varepsilon})}\|\nabla \psi\|_{L^{2}(\Omega_{4\varepsilon})}.
\end{align*}
Likewise,
\begin{align*}
\mathcal{I}_4+\mathcal{I}_5+\mathcal{I}_6
 \leq C\|  \nabla v_{\va, \lm}   \|_{L^{2}(\Omega_{5\varepsilon})}\|\nabla \psi\|_{L^{2}(\Omega)}
 +C\varepsilon\| \nabla^{2}v_{\va, \lm} \|_{L^{2}(\Omega\setminus \Omega_{2\varepsilon})}\|\nabla \psi\|_{L^{2}(\Omega)}.
\end{align*}
By taking the estimates on $\mathcal{I}_1, \dots, \mathcal{I}_{6}$ into \eqref{rate w psi}, it yields
$$
 |\langle\mathcal{L}^\lambda_{\varepsilon} (  w_{\va, \lm}) , \psi\rangle| 
 \le
 C\|  \nabla v_{\va, \lm}   \|_{L^{2}(\Omega_{5\varepsilon})}\|\nabla \psi\|_{L^{2}(\Omega)}
 +C\varepsilon\|  \nabla^{2}v_{\va, \lm} \|_{L^{2}(\Omega\setminus \Omega_{2\varepsilon})}\|\nabla \psi\|_{L^{2}(\Omega)},
$$
which gives \eqref{conver_es_w1_psi}  by choosing $\psi=w_{\va, \lm}\in H^2_0(\Omega; \R^d) $ and using the Cauchy  inequality.
\end{proof}

Now we are prepared to prove Theorem \ref{apth2}.

\begin{proof}[\textbf{Proof of Theorem \ref{apth2}}] By dilation and translation, it suffices to consider the case where
$r=1$ and $ z=0$.  Let $u_{\va, \lm}$  be a solution  of 
$
\mathcal{L}^\lm_\va  (u_{\va, \lm} ) = F
$
in $B_2$, 
and $v_{\va , \lm}$ the solution to the Dirichlet problem,
\begin{align*}
  \lm^2 \va^2 \Delta^2 v_{\va, \lm}  +
   \mathcal{L}^\lm_0 (  v_{\va , \lm}  ) = F  \quad   \text{ in } B_{3/2}   \quad \text{ and } \quad 
 v_{\va , \lm}  -  u_{\va, \lm} \in H^2_0(B_{3/2}; \R^d).
\end{align*}
Let $w_{\va, \lm}$ be defined by (\ref{w-e-11}).
We apply Lemma \ref{le1} with $\Omega=B_{3/2}$ to obtain 
\begin{equation}\label{lp-1}
\|\nabla w_{\va, \lm}\|_{L^2(B_{3/2} )}
\le C \|\nabla v_{\va, \lm} \|_{L^2( B_{3/2} \setminus B_{(3/2) -5\va})}
+ C \va \| \nabla^2 v_{\va,\lm} \|_{L^2( B_{(3/2) -2\va})}.
\end{equation}
Since $\widehat{A^\lm}$ is constant, we may apply (\ref{Ca-2}) to the function $\nabla v_{\va, \lm}$.
This gives
$$
\int_{B} |\nabla^2 v_{\va, \lm} |^2\, dx
\le \frac{C}{r^2} \int_{2B} |\nabla v_{\va, \lm} |^2\, dx
+ C \int_{2B} |F|^2\, dx,
$$
for any $2B=B(x_0, 2r)\subset B_2$. It follows that 
$$
\aligned
\int_{B_{(3/2) -2\va}} |\nabla^2 v_{\va, \lm}|^2\, dx
 & \le C \int_{B_{(3/2)  -\va}} \frac{|\nabla v_{\va, \lm} (x)|^2}{ [\delta (x) ]^2}\, dx
+ C \int_{B_{(3/2) -\va}} |F|^2\, dx\\
& \le C_q  \va ^{-1-\frac{2}{q}}  \left(\int_{B_{3/2}} |\nabla v_{\va, \lm}|^q \, dx \right)^{2/q}
+ C \int_{B_2} |F|^2\, dx,
\endaligned
$$
where $\delta (x)=\text{\rm dist} (x, \partial B_{3/2} )$, $q>2$ and we have used H\"older's inequality for the last step.
In view of (\ref{lp-1}) we deduce that for any $q>2$,
\begin{equation}\label{lp-3}
\|\nabla w_{\va, \lm} \|_{L^2(B_{3/2} )}
\le C \va^{\frac12 -\frac{1}{q}} \|\nabla v_{\va, \lm} \|_{L^q(B_{3/2} )} +C \va  \|F\|_{L^2(B_2)}.
\end{equation}

Next, we observe that $ u_{\va, \lm} -v_{\va,  \lm} \in H^2_0(B_{3/2} )$ and 
\begin{equation}\label{d-eq}
\lm^2 \va^2 \Delta^2 (u_{\va, \lm} -v_{\va, \lm})
-\text{\rm div} \big( \widehat{A^\lambda}
\nabla ( u_{\va, \lm} -v_{\va, \lm} ) \big)
=\text{\rm div} \big( ( A(x/\va) - \widehat{A^\lm} ) \nabla u_{\va, \lm} \big)
\end{equation}
in $B_{3/2}$.
By energy estimates this gives (\ref{s-ap-0}) with $r=1$.
It follows by Theorem \ref{Meyers-thm}  that there exist  some $q>2$ and $C>0$, depending only on $d$, $\nu_1$ and $\nu_2$, such that
$$
\int_{B_{3/2} } 
|\nabla ( u_{\va, \lm} - v_{\va, \lm})|^q\, dx
\le C \int_{B_{3/2} } | \nabla u_{\va, \lm} |^q\, dx.
$$
As a result, there exists some $q>2$ such that
\begin{equation}\label{lp-4}
\|\nabla w_{\va, \lm} \|_{L^2(B_{3/2})}
\le C \va^{\frac12 -\frac{1}{q}} \|\nabla u_{\va, \lm} \|_{L^q(B_2)}
+ C\va  \| F\|_{L^2(B_2)}.
\end{equation}

Note that for $x\in B_1$,
$$
\nabla w_{\va, \lm} =\nabla u_{\va, \lm} -\nabla v_{\va, \lm}
- ( \nabla \chi^\lm )^\va S_\va (\nabla v_{\va, \lm})
-\va ( \chi^\lm)^\va   S_\va (\nabla^2 v_{\va, \lm }).
$$
It follows from (\ref{lp-4})  that 
\begin{align} \label{apth201}
 &\| \na  u_{\va, \lm} - \na v_{\va, \lm} - (\na\chi^\lm)^\va   \na v_{\va, \lm} \|_{L^2(B_1 )}\nonumber\\
&\leq 
 C \va^{\frac12 -\frac{1}{q}} \|\nabla u_{\va, \lm} \|_{L^q(B_2)}
+ C\va  \| F\|_{L^2(B_2)}\nonumber \\
& \quad
+\|  (\nabla\chi^\lm )^\va \big( \nabla v_{\va, \lm}
-S_\va (\nabla v_{\va, \lm}) \big) \|_{L^2(B_1)}
+ \va \| (\chi^\lm)^\va S_\va (\nabla^2 v_{\va, \lm} ) \|_{L^2(B_1)}.
\end{align}
By (\ref{22}), the  last term in the right-hand side of (\ref{apth201}) is bounded by
$$
C  \va \| \nabla^2 v_{\va, \lm} \|_{L^2(B_{5/4}) }
\le C \va^{\frac12 -\frac{1}{q}} \|\nabla u_{\va, \lm} \|_{L^q(B_2)}
+ C\va  \| F\|_{L^2(B_2)}.
$$
To handle the third term in the right-hand side of  (\ref{apth201}), we use the $C^{1, \sigma}$ estimate
for the operator $\lm^2 \va^2 \Delta^2 -\text{\rm div} (\widehat{A^\lm} \nabla )$ to obtain 
\begin{equation}\label{C-1-a-5}
\|\nabla v_{\va, \lm}\|_{C^{0, \sigma } (B_{5/4} )}
\le C \| \nabla v_{\va, \lm }\|_{L^2(B_{3/2})}
+ C \| F\|_{L^p(B_{3/2})},
\end{equation}
where $0<\sigma < 1-\frac{d}{p}$.
It follows that
\begin{align}\label{papth1011}
\|  (\nabla\chi^\lm )^\va \big( \nabla v_{\va, \lm}
-S_\va (\nabla v_{\va, \lm}) \big) \|_{L^2(B_1)}
&\leq C   \| ( \na\chi^\lm) ^\va \|_{L^2(B_1)} 
\|  \na v_{\va , \lm}  -S_\va(\na v_{\va, \lm} )\|_{L^\infty(B_1)} 
\nonumber\\
&\leq C \va^{\sigma } \|\na  v_{\va, \lm } \|_{C^{0,\sigma }(B_{5/4} )}\nonumber\\
&\le C \va^\sigma \big\{ 
\| \nabla v_{\va, \lm} \|_{L^2(B_{3/2})} 
+\|F\|_{L^p(B_2)} \big\}\nonumber \\
&\leq C  \va^\sigma  \big\{ \|\na u_{\va , \lm} \|_{L^2(B_{3/2})} +\|F\|_{L^p(B_2)}\big\}.
\end{align}
In summary, we have proved that if $0<\sigma < \min (\frac{1}{2} -\frac{1}{q}, 1-\frac{d}{p})$, then 
\begin{equation}\label{lp-11}
\| \na  u_{\va, \lm} - \na v_{\va, \lm} - (\na\chi^\lm)^\va   \na v_{\va, \lm} \|_{L^2(B_1 )}
\le 
C  \va^\sigma  \big\{ \|\na u_{\va , \lm} \|_{L^q(B_{3/2} )} +\|F\|_{L^p(B_2)}\big\},
\end{equation}
where $2<q<\bar{q}$ and $\bar{q}$ depends only on $d$, $\nu_1$ and $\nu_2$.

Finally, we use the reverse H\"older estimate \eqref{RH-1} to obtain 
\begin{equation}
\| \nabla u_{\va, \lm } \|_{L^q(B_{3/2})}
\le C \big\{ \|\nabla u_{\va, \lm} \|_{L^2(B_2)}
+ \| F\|_{L^2(B_2)} \big\},
\end{equation}
where $q>2$ and $C$ depends only on $d$, $\nu_1$ and $\nu_2$.
This, together with (\ref{lp-11}), gives (\ref{apth2re1}) with $r=1$.
\end{proof}


\section{Large-scale  $C^{1,\alpha}$ estimates}\label{section-5}

Recall that $P_j^\beta (x)= x_j (0, \dots, 1, \dots, 0)$ with $1$ in the $\beta^{th}$ position.
Let
 \begin{equation}\label{plm}
 \aligned
\mathcal{H}_{1,\va}^\lm= &  \Big\{ h(x): \  h(x)=b+E_j^\beta  (P_j^\beta (x)  +\va \chi_j^{\lm, \beta}
(x/\va)) \\
&\qquad\qquad
 \text{ for some }  b\in \R^d \text{ and }   E=(E_j^\beta) \in  \R^{d\times d }\Big\}.
 \endaligned
\end{equation}

\begin{theorem}\label{c1al}
Assume that $A$ satisfies \eqref{econ} and \eqref{pcon}.
 Let $u_{\varepsilon, \lm} \in H^1(B_R; \R^d )$ be a solution of  $\mathcal{L}^\lm_\varepsilon ( u_{\varepsilon, \lm}) =F$
  in $B_R=B(x_0, R)$, where  $R>\va$ and $F\in L^p(B_R;\R^d )$ for some $p>d$. 
  Then for any $\va \leq r< R$ and  $0<\alpha<1-\frac{d}{p} ,$
\begin{align}\label{c1alre1}
\inf_{h\in \mathcal{H}_{1,\va}^\lm  }\left(\fint_{B_r} |\na u_{\va, \lm} 
-\na h |^2\right)^{1/2}\leq C \left(\frac{r}{R}\right)^{\alpha} \left\{\left(\fint_{B_R}
| \na u_{\va, \lm} |^2\right)^{1/2} +
R\left(\fint_{B_R} |F|^p\right)^{1/p}\right\},
\end{align}
where $C$ depends only on $d$, $\nu_1$, $\nu_2$, $p$,  and $\alpha$.
\end{theorem}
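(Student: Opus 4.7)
The proof proceeds by a Campanato-type iteration that combines the two-scale approximation of Theorem \ref{apth2} with uniform $C^{1,\alpha}$ regularity for the constant-coefficient operator $\lm^2\va^2\Delta^2 - \text{\rm div}(\widehat{A^\lm}\nabla)$. Set
\[
\Phi(r) = \inf_{h \in \mathcal{H}_{1,\va}^\lm}\Big(\fint_{B_r}|\na u_{\va,\lm} - \na h|^2\Big)^{1/2}.
\]
The crucial observation is that every $h \in \mathcal{H}_{1,\va}^\lm$ satisfies $\mathcal{L}^\lm_\va h = 0$ by the cell problem (\ref{chilam}), so $u_{\va,\lm} - h$ still solves $\mathcal{L}^\lm_\va(u_{\va,\lm} - h) = F$, and at each scale one may replace $u_{\va,\lm}$ by $u_{\va,\lm} - h^*$ for a near-optimal $h^* \in \mathcal{H}_{1,\va}^\lm$ realizing $\Phi(2r)$.

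The heart of the argument is a one-step excess-decay estimate: fix any $\alpha_0 \in (\alpha, 1 - d/p)$; I claim there exist $\theta \in (0, 1/4)$ and $N \ge 1$, depending only on $\alpha$, $d$, $\nu_1$, $\nu_2$, $p$, such that whenever $2r \le R$ and $r \ge \theta^{-N}\va$,
\[
\Phi(\theta r) \le \tfrac12\theta^\alpha\, \Phi(2r) + C\theta^\alpha r \Big(\fint_{B_{2r}}|F|^p\Big)^{1/p}.
\]
To establish this, I apply Theorem \ref{apth2} to $u_{\va,\lm} - h^*$ on $B_{2r}$ to produce $v$ satisfying $\lm^2\va^2\Delta^2 v - \text{\rm div}(\widehat{A^\lm}\na v) = F$ on $B_r$ with $(\fint_{B_r}|\na v|^2)^{1/2} \le C\Phi(2r)$ and $L^2(B_r)$ approximation error of order $(\va/r)^\sigma[\Phi(2r) + r(\fint_{B_{2r}}|F|^p)^{1/p}]$. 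After dilation, the equation for $v$ has the form $\widetilde\lm^2\Delta^2 v - \text{\rm div}(\widehat{A^\lm}\na v) = \widetilde F$ with $\widetilde\lm = \lm\va/r$ and $\widehat{A^\lm}$ constant; since $\widehat{A^\lm}$ is uniformly elastic in $\lm$ by Lemma \ref{ela-lemma}, Theorem \ref{loc-thm} combined with a standard perturbation-Campanato argument (using (\ref{C-1a-1}) when $F=0$) yields a constant matrix $E = \fint_{B_{\theta r}}\na v$ with
\[
\Big(\fint_{B_{\theta r}}|\na v - E|^2\Big)^{1/2} + \|\na v - E\|_{L^\infty(B_{\theta r})} \le C\theta^{\alpha_0}\Big\{\Phi(2r) + r\Big(\fint_{B_{2r}}|F|^p\Big)^{1/p}\Big\}.
\]
Taking $h(x) = h^*(x) + b + E_j^\beta(P_j^\beta(x) + \va \chi_j^{\lm,\beta}(x/\va)) \in \mathcal{H}_{1,\va}^\lm$ for a suitable constant $b$, the triangle inequality decomposes $\na u_{\va,\lm} - \na h$ into three pieces: the two-scale approximation error, $\na v - E$, and the oscillating product $(\na\chi^\lm)^\va (\na v - E)$. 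The last piece is handled by H\"older's inequality, the $L^\infty$ bound above, and the uniform averaged estimate $(\fint_{B_{\theta r}}|(\na \chi^\lm)^\va|^2)^{1/2} \le C\|\na \chi^\lm\|_{L^2(Y)} \le C$ for $\theta r \ge \va$, which follows from periodicity and (\ref{echilm}). Selecting $\theta$ so small that $C\theta^{\alpha_0} \le \tfrac14 \theta^\alpha$ and then $N$ large enough that $C(\va/r)^\sigma \le \tfrac14\theta^\alpha$ whenever $r \ge \theta^{-N}\va$ yields the one-step decay.

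The final step is to iterate this inequality geometrically from the initial scale $R$ down to the smallest admissible scale $r_*$ of order $\theta^{-N}\va$; the standard geometric-series argument converts the per-step factor $\theta^\alpha$ into the desired rate $(r/R)^\alpha$, with the $F$-contributions summing to a constant multiple of the largest scale. For intermediate scales $r \in [\va, r_*]$ the conclusion (\ref{c1alre1}) follows trivially from the case $r = r_*$, since $r_*/\va$ is bounded by a constant depending only on $\alpha, d, \nu_1, \nu_2, p$. The main technical difficulty is keeping every constant uniform in $\lm \in (0,\infty)$: this relies on the uniform elasticity of $\widehat{A^\lm}$ (Lemma \ref{ela-lemma}), the $\lm$-uniform corrector bound from (\ref{echilm}), and the $\lm$-uniformity of Theorem \ref{apth2}; without these, the rate $\theta^\alpha$ could degenerate as $\lm \to 0$ or $\lm \to \infty$.
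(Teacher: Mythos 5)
Your proposal is correct and follows essentially the same route as the paper: introduce the intermediate solution $v_{\va,\lm}$ of the constant-coefficient fourth-order equation via Theorem \ref{apth2}, split the excess $\na u_{\va,\lm}-\na h$ into the two-scale approximation error, the excess of $\na v_{\va,\lm}$ (controlled by the $\lm$-uniform $C^{1,\gamma}$ estimate from Theorem \ref{loc-thm}/Lemma \ref{lemma-r-1}), and the corrector-weighted excess, and then run a Campanato iteration down to the scale $N_0\va$. The only cosmetic differences from the paper's argument are that you center the comparison constant at $E=\fint_{B_{\theta r}}\na v$ rather than $\na v_{\va,\lm}(0)$, and that you subtract the near-optimal $h^*$ before applying Theorem \ref{apth2} whereas the paper invokes $\mathcal{L}_\va^\lm(h)=0$ at the end of the displayed chain to pass to the infimum on the right-hand side; these are interchangeable.
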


\begin{proof} 

By translation and dilation, we may assume that $x_0=0$ and $ R=2.$ 
We also assume that $\va<r<(1/8)$, as the estimate \eqref{c1alre1} is trivial for $r\geq(1/8)$.
 Let $v_{\va, \lm} $ be the weak solution of 
 $\va^2 \lm^2 \Delta^2 v_{\va, \lm} + \mathcal{L}_0^\lm (v_{\va, \lm} )  =F$ given by Theorem \ref{apth2}.
Let $\va<tr<r<1$, where $0<t<(1/4)$ is to be determined, and 
$$
\overline{h}=\na v_{\va, \lm} (0) (P(x) +\va\chi^\lm(x/\va)),
$$ 
where $P=(P_j^\beta(x))$.
We obtain 
\begin{align}\label{pc1a0}
 &\left(\fint_{B_{tr}}|\na u_{\va, \lm} -\na \overline{h}|^2\right)^{1/2} + tr\left(\fint_{B_{tr}}|F|^p\right)^{1/p}\nonumber\\
  &\leq  \left(\fint_{B_{tr}}|\na u_{\va, \lm} -\na v_{\va, \lm} -( \na \chi^\lm)^\va \na v_{\va, \lm} |^2\right)^{1/2}\nonumber\\
 &\quad+ \left(\fint_{B_{tr}}|\na v_{\va, \lm}  + ( \na \chi^\lm)^\va  \na v_{\va, \lm} -\na \overline{h}|^2\right)^{1/2}+C t^{1-d/p}r\left(\fint_{B_{2r}}|F|^p\right)^{1/p}.
\end{align}
Denote the first two terms in the right-hand side  of \eqref{pc1a0} by $(\ref{pc1a0})_1, (\ref{pc1a0})_2$. Thanks to Theorem \ref{apth2},
\begin{align}\label{pc1a1}
(\ref{pc1a0})_1&\leq C t^{-d/2}\left(\fint_{B_{r}}|\na u_{\va, \lm} -\na v_{\va, \lm} 
-( \na \chi^\lm)^\va  \na v_{\va, \lm} |^2\right)^{1/2}\nonumber\\
&\leq C t^{-d/2}  \left(\frac{\va}{r}\right)^{^\sigma} \left\{ \left(\fint_{B_{2r}}|\na u_{\va, \lm } |^2\right)^{1/2}+r \left(\fint_{B_{2r}}|F|^p\right)^{1/p}\right\}.
\end{align}
On the other hand, by the $C^{1,\alpha} $ estimate of $v_{\va, \lm} $,
\begin{align}\label{pc1a2}
  (\ref{pc1a0})_2&\le  \left(\fint_{B_{tr}}|\na v_{\va, \lm}  -\na v_{\va, \lm} (0)|^2\right)^{1/2} 
  +\left(\fint_{B_{tr} } | (\na \chi^\lm)^\va  [\na v_{\va, \lm} -\na v_{\va, \lm} (0)]|^2\right)^{1/2} \nonumber\\
 &\leq C(tr) ^{\gamma }  \| \nabla v_{\va, \lm} \|_{C^{0, \gamma }(B_{tr})}\nonumber\\
 &\leq C t ^{\gamma  } \left\{ \left(\fint_{B_r} | \na v_{\va, \lm} |^2 \right)^{1/2}+r  \left(\fint_{B_r} |F|^p\right)^{1/p} \right\}\nonumber\\
 &\leq C t  ^{\gamma  } \left\{  \left(\fint_{B_{2r}} |\na u_{\va, \lm} |^2 \right)^{1/2}+r  \left(\fint_{B_{2r}} |F|^p\right)^{1/p} \right\},
\end{align}
where  $0<\gamma < 1-\frac{d}{p}$ and we have used (\ref{s-ap-0}) for the last inequality.

Taking \eqref{pc1a1} and \eqref{pc1a2} into \eqref{pc1a0} and using the fact $ \mathcal{L}_\va^\lm ( h) =0$ for any $h\in \mathcal{H}_{1,\va}^\lm$, we derive that
\begin{align*}
&\inf_{h\in \mathcal{H}_{1,\va}^\lm }  \left\{\frac{1}{(rt)^\alpha}\left(\fint_{B_{tr}}|\na u_{\va, \lm} -\na h |^2\right)^{1/2}   + tr\left(\fint_{B_{tr}}|F|^p\right)^{1/p} \right\}\nonumber\\
&\leq C  \inf_{h\in \mathcal{H}_{1,\va}^\lm } \left\{t^{-d/2-\al}  \left(\frac{\va}{r}\right)^{^\sigma} + t^{\gamma -\alpha} \right\} \nonumber\\
 &\quad \times \frac{1}{(2r)^{\alpha}}\left\{ \left(\fint_{B_{2r}}|\na u_{\va, \lm}-\na h  |^2\right)^{1/2}+r \left(\fint_{B_{2r}}|F|^2\right)^{1/2}\right\}.
\end{align*}
For any $0<\alpha<1-\frac{d}{p} $, we first choose  $\gamma \in (\alpha, 1-\frac{d}{p}) $ and
then $t>0$ so small that $C t^{\gamma -\al}\leq 1/4.$
As a result,   if $r\geq N_0 \va$, where $N_0>1$ is so large  that
$$
 C t^{-d/2-\al}  \Big(\frac{\va}{r}\Big)^{^\sigma} \leq 1/4,
 $$
 then 
\begin{align*}
&\inf_{h\in \mathcal{H}_{1,\va}^\lm }  \left\{\frac{1}{(tr)^\alpha}\left(\fint_{B_{tr}}|\na u_{\va, \lm} -\na h |^2\right)^{1/2}   + tr\left(\fint_{B_{tr}}|F|^p\right)^{1/p} \right\}\nonumber\\
&
\leq  \frac{1}{2}
\inf_{h\in \mathcal{H}_{1,\va}^\lm }  \left\{ \frac{1}{(2r)^\alpha}\left(\fint_{B_{2r}}|\na u_{\va, \lm} -\na h |^2\right)^{1/2} +r \left(\fint_{B_{2r}}|F|^p\right)^{1/p} \right\}.
\end{align*} 
 By iteration, this implies that
\begin{align}\label{pc1a4}
&\inf_{h\in \mathcal{H}_{1,\va}^\lm }  \left\{\frac{1}{(tr)^\alpha}\left(\fint_{B_{tr}}|\na u_{\va, \lm} -\na h |^2\right)^{1/2}   + tr\left(\fint_{B_{tr}}|F|^p\right)^{1/p} \right\}\nonumber\\
&\leq \inf_{h\in \mathcal{H}_{1,\va}^\lm }  \left \{ \left(\fint_{B_{2}}|\na u_{\va, \lm} -\na h |^2\right)^{1/2} +\left(\fint_{B_{2}}|F|^p\right)^{1/p} \right\}
\end{align}
for any $r\geq N_0 \va.$ 
The case $\va\le r <  N_0 \va$ follows easily from the case $r=N_0\va$.
\end{proof}

As a corollary, we obtain a Liouville theorem for the operator $\mathcal{L}^\lm$.

\begin{theorem}
Suppose $A$ satisfies conditions \eqref{econ} and \eqref{pcon}.
Let $u\in H^2_{loc}(\R^d; \R^d)$ be a weak solution of
$$
\lm^2 \Delta^2 u -\text{\rm div}(A\nabla u) =0 \quad \text{ in } \R^d.
$$
Suppose that there exist $C>0$ and $\sigma\in (0, 1)$ such that
$$
\left(\fint_{B(0, R)} |u|^2 \right)^{1/2}
\le C R^{1+\sigma}
$$
for all $R>1$. Then there exist $b\in \R^d$ and $E=(E_j^\beta)\in \R^{d\times d}$ such that
$$
u(x) = b + E_j^\beta ( P_j^\beta + \chi^{\lm, \beta} _j (x))  \quad \text{ in } \R^d.
$$
\end{theorem}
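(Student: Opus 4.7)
The plan is to combine the large-scale $C^{1,\alpha}$ estimate of Theorem \ref{c1al} (applied with $\va=1$, so that $\mathcal{L}^\lm_\va=\mathcal{L}^\lm$ and the approximating class is $\mathcal{H}^\lm_{1,1}$) with a Cauchy-type argument inside the finite-dimensional space $\mathcal{H}^\lm_{1,1}$, in the spirit of Avellaneda–Lin.

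First I would apply the Caccioppoli inequality \eqref{Ca-2} to $u$ on $B_{2R}$ (with $F=0$, $f=0$) and combine it with the growth hypothesis to obtain
\[
\left(\fint_{B_R}|\na u|^2\right)^{1/2} \le \frac{C}{R}\left(\fint_{B_{2R}}|u|^2\right)^{1/2} \le C R^\sigma \qquad\text{for all } R\ge 1,
\]
with $C$ independent of $R$ (the constant in \eqref{Ca-2} depends only on $d,\nu_1,\nu_2$). Next, fix $\alpha\in(\sigma,1)$ and any $p>d$. Applying Theorem \ref{c1al} to $u$ on $B_R$ then supplies, for each $R\ge 2$ and each $r\in[1,R/2]$, a function $h_R\in\mathcal{H}^\lm_{1,1}$ such that
\[
\left(\fint_{B_r}|\na u - \na h_R|^2\right)^{1/2} \le C\left(\frac{r}{R}\right)^\alpha\left(\fint_{B_R}|\na u|^2\right)^{1/2} \le C\,r^\alpha R^{\sigma-\alpha}.
\]

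Since $\alpha>\sigma$, the right-hand side tends to zero as $R\to\infty$ for each fixed $r$, and the triangle inequality yields $\|\na h_{R_1}-\na h_{R_2}\|_{L^2(B_r)}\to 0$ as $R_1,R_2\to\infty$. Writing $h_R = b_R + E_{R,j}^\beta(P_j^\beta + \chi_j^{\lm,\beta})$, the key step is to upgrade this gradient convergence to the convergence of the matrices $E_R\in\R^{d\times d}$. This is a norm-equivalence statement on the finite-dimensional quotient $\mathcal{H}^\lm_{1,1}/\R^d\simeq\R^{d\times d}$: if $\na h\equiv 0$ on $B_r$ for some $h=E_j^\beta(P_j^\beta+\chi_j^{\lm,\beta})$, then $h$ is constant on $B_r$, and since $h$ solves $\mathcal{L}^\lm(h)=0$ in $\R^d$, unique continuation propagates constancy to all of $\R^d$; averaging $\na h$ over the period cell $Y$ and using $\fint_Y\na\chi^\lm=0$ then forces $E=0$. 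Hence $(E_R)$ is Cauchy in $\R^{d\times d}$; call its limit $E^*$.

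Set $h^*(x)=E^{*,\beta}_j(P_j^\beta(x)+\chi_j^{\lm,\beta}(x))$. Sending $R\to\infty$ in the approximation bound (and using $\na h_R\to\na h^*$ in $L^2(B_r)$ via the linear dependence on $E_R$) yields $\na u=\na h^*$ in $B_r$ for every $r\ge 1$, hence a.e.\ in $\R^d$. Since $u\in H^2_{\mathrm{loc}}(\R^d)$ is continuous, one concludes $u-h^*\equiv b$ for some $b\in\R^d$, which is the claimed representation. The main obstacle in this argument is the norm-equivalence step: its validity hinges on unique continuation for $\mathcal{L}^\lm$ together with the periodicity and zero mean of $\na\chi^\lm$, which together guarantee that the gradient map on $\mathcal{H}^\lm_{1,1}/\R^d$ is nondegenerate.
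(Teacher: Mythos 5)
Your overall strategy is exactly the intended one: the paper proves this Liouville theorem with the single remark that it ``follows readily from Theorem \ref{c1al} with $\va=1$ and $F=0$,'' and your proposal is precisely the spelled-out version --- Caccioppoli plus the growth hypothesis to control $\fint_{B_R}|\na u|^2$, the large-scale $C^{1,\alpha}$ excess decay at scale $\va=1$, then Cauchy-in-$R$ plus finite-dimensionality of $\mathcal{H}^\lm_{1,1}$. Two small points deserve attention.

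First, the minimizing $h$ in Theorem \ref{c1al} depends on both $r$ and $R$; you should fix a single $r$, pass $R\to\infty$ to obtain a candidate $h^{*,(r)}$, and then observe that the matrices $E^{*,(r)}$ agree for different $r$ (they are determined by $\na u$ on the smallest ball considered), which is a harmless bookkeeping matter.

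Second, and more substantively, your norm-equivalence step invokes unique continuation for $\mathcal{L}^\lm=\lm^2\Delta^2-\mathrm{div}(A\nabla)$ with merely bounded measurable $A$. That is not established for this operator and is not needed. The clean argument is: $\na h$, for $h=E_j^\beta(P_j^\beta+\chi_j^{\lm,\beta})$, is the $1$-periodic field $E\,(I+\na\chi^\lm)$, and since $\fint_Y\na\chi^\lm=0$ one has $\fint_{Y+z}\na h = E$ for every $z\in\Z^d$. Thus if $\na h\equiv0$ on $B_r$ with $r\ge\sqrt{d}$ (so $B_r$ contains a translate of $Y$), then $E=0$ directly. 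Carrying out your Cauchy argument with a fixed $r_0\ge\sqrt{d}$ (permissible since Theorem \ref{c1al} allows any $r\ge\va=1$) gives the norm equivalence on $\mathcal{H}^\lm_{1,1}/\R^d$ without any appeal to unique continuation, and then varying $r\ge r_0$ shows the representation holds on all of $\R^d$. With this replacement, the proof is correct and coincides with the paper's approach.
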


\begin{proof}
This follows readily from Theorem \ref{c1al} with $\va=1$ and $F=0$.
\end{proof}


\section{Proof of Theorems \ref{lipth} and \ref{main-thm-3}}\label{section-6} 

\begin{theorem}\label{liplm}
Assume that $A$ satisfies \eqref{econ} and \eqref{pcon}.
Let $u_{\varepsilon, \lm} \in H^2(B_R; \R^d)$ be a solution of
 $\mathcal{L}^\lm_\varepsilon ( u_{\varepsilon, \lm} ) =F$ in $B_R$, 
 where  $F\in L^p(B_R; \R^d)$ for some $p>d$. Then for any $\va \leq r< R$,
\begin{align}\label{liplmre1}
\left(\fint_{B_r} |\na u_{\varepsilon, \lm}  |^2\right)^{1/2}\leq C   \left\{\left(\fint_{B_R}
| \na u_{\varepsilon,  \lm} |^2\right)^{1/2} +
R\left(\fint_{B_R} |F|^p\right)^{1/p}\right\},
\end{align}
 where $C$ depends only on $d$, $\nu_1$, $\nu_2$,  and $p$.
\end{theorem}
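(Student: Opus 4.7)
The plan is to iterate the large-scale $C^{1,\alpha}$ excess-decay of Theorem \ref{c1al} over dyadic scales. Set
$$
\Phi := \left(\fint_{B_R}|\na u_{\va,\lm}|^2\right)^{1/2} + R\left(\fint_{B_R}|F|^p\right)^{1/p},
$$
fix $\alpha \in (0, 1-\tfrac{d}{p})$ and, for each integer $k \ge 0$ with $r_k := 2^{-k} R \ge \va$, let $h_k = b_k + E_k[P + \va\chi^\lm(\cdot/\va)] \in \mathcal{H}^\lm_{1,\va}$ achieve (up to a constant factor) the infimum in \eqref{c1alre1} at radius $r_k$. At $k=0$ we may take $h_0 \equiv 0$, so that $E_0 = 0$. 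Since $\mathcal{H}^\lm_{1,\va}$ is a vector space, $h_k - h_{k+1}$ also belongs to it with coefficient $E_k - E_{k+1}$, and applying Theorem \ref{c1al} on both $B_{r_k}$ and $B_{r_{k+1}}$, combined with the triangle inequality and the inclusion $B_{r_{k+1}} \subset B_{r_k}$, yields
$$
\left(\fint_{B_{r_{k+1}}} |\na (h_k - h_{k+1})|^2\right)^{1/2}\le C 2^{-k\alpha}\Phi.
$$

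The key algebraic ingredient will be the following two-sided norm equivalence: for every $E \in \R^{d\times d}$ and every $r \ge \va$,
$$
c|E|^2 \le \fint_{B_r} \left| \na\bigl[ E(P + \va\chi^\lm(\cdot/\va))\bigr] \right|^2 \le C |E|^2,
$$
with constants depending only on $d$, $\nu_1$, $\nu_2$. The upper bound is immediate from the corrector estimate \eqref{echilm} combined with the periodic averaging inequality \eqref{22}. For the lower bound, one uses $\fint_Y \na\chi^\lm = 0$ (so the average of $I + \na\chi^\lm(\cdot/\va)$ over $B_r$ is close to $I$ when $r \gtrsim \va$), together with the fact, coming from the third symmetry in \eqref{econ}, that $\chi^\lm$ vanishes on antisymmetric input; a Korn-type argument then upgrades the resulting symmetric-part estimate to the full $|E|^2$.

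Applying the lower bound to $h_k - h_{k+1}$ gives $|E_k - E_{k+1}| \le C 2^{-k\alpha}\Phi$. Summing the geometric series from $E_0 = 0$ yields $|E_k| \le C\Phi$ for every admissible $k$, after which the upper bound delivers $\fint_{B_{r_k}} |\na h_k|^2 \le C\Phi^2$, and therefore
$$
\left(\fint_{B_{r_k}}|\na u_{\va,\lm}|^2\right)^{1/2} \le \left(\fint_{B_{r_k}}|\na u_{\va,\lm} - \na h_k|^2\right)^{1/2} + \left(\fint_{B_{r_k}}|\na h_k|^2\right)^{1/2} \le C\Phi.
$$
For arbitrary $r \in [\va, R)$, one sandwiches $r_{k+1} \le r \le r_k$ with $r_k \ge \va$ (the residual range $r \in [\va, N_0\va]$ is handled directly by the Caccioppoli estimate \eqref{Ca-2} and the reverse H\"older inequality \eqref{RH-1}), and uses $\fint_{B_r}|\na u_{\va, \lm}|^2 \le 2^d \fint_{B_{r_k}}|\na u_{\va, \lm}|^2$ to conclude \eqref{liplmre1}.

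The main obstacle will be the lower bound in the norm equivalence, since the ellipticity of $\widehat{A^\lm}$ (Lemma \ref{ela-lemma}) only directly controls the symmetric part of $E$. The antisymmetric part must be handled separately: the third symmetry in \eqref{econ} forces the cell problem for antisymmetric data to reduce to $\lm^2 \Delta^2 \chi = 0$ with periodic boundary conditions (hence $\chi^\lm$ vanishes there), so $\nabla h_{E^a} = E^a$ is literally constant, and a Korn-type inequality on $B_r$ together with the orthogonality of the symmetric and antisymmetric parts of $E$ produces the full bound on $|E|$ up to a cross term that vanishes for $r/\va$ sufficiently large.
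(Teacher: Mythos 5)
Your proposal fills in, correctly and in the standard way, the details that the paper omits (the paper simply cites \cite{fisher2016}): fix a minimizer $h_k\in\mathcal{H}_{1,\va}^\lm$ at each dyadic scale $r_k=2^{-k}R$, use the excess decay of Theorem~\ref{c1al} plus the triangle inequality to get geometric decay of $|E_k-E_{k+1}|$ via a lower bound on $\fint_{B_r}|\nabla h_E|^2$ in terms of $|E|^2$, sum the telescoping series to bound $|E_k|\lesssim\Phi$, then convert back with the upper bound and the triangle inequality, and handle $r\in[\va,N_0\va]$ by trivial doubling. The iteration scheme, the role of the norm equivalence, and the endgame all match what is meant by ``as in \cite{fisher2016}.''

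One small remark on the lower bound of the norm equivalence, which is the only place your write-up is more elaborate than it needs to be. Writing $\nabla h_E = E + (\nabla\chi^\lm)(x/\va)\cdot E$, the key fact is
\[
\Big|\fint_{B_r}(\nabla\chi^\lm)(x/\va)\,dx\Big|\le C\,\frac{\va}{r}\,\|\nabla\chi^\lm\|_{L^2(Y)}\le C\,\frac{\va}{r},
\]
which follows by decomposing $B_r$ into full $\va$-periods (where the integral vanishes, since $\fint_Y\nabla\chi^\lm=0$) plus a boundary layer of measure $O(\va r^{d-1})$, and applying Cauchy--Schwarz on the boundary layer. Then Jensen's inequality gives
\[
\fint_{B_r}|\nabla h_E|^2\ge\Big|\fint_{B_r}\nabla h_E\Big|^2\ge\big(1-C\va/r\big)^2|E|^2\ge\tfrac14|E|^2
\]
once $r\ge N_0\va$ for a suitable $N_0=N_0(d,\nu_1,\nu_2)$. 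This works for arbitrary $E\in\R^{d\times d}$ at once; the decomposition into symmetric and antisymmetric parts, the observation that $\chi^\lm$ vanishes on antisymmetric input, and the Korn-type argument you invoke are all correct facts but are not needed here. (They would matter if one tried to prove the lower bound through the ellipticity of $\widehat{A^\lm}$ rather than through Jensen, which is a harder route.) With this simplification the proof is exactly the argument the paper intends.
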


\begin{proof}
This follows from Theorem \ref{c1al}, as in the case of second-order elliptic equations \cite{fisher2016}.
We omit the details.
\end{proof}

\begin{proof}[\textbf{Proof of Theorem \ref{lipth}}]
Since $\mathcal{L}_\va =\mathcal{L}_\va^\lambda$ with $\lm =\kappa \va^{-1}$,
Theorem \ref{lipth} follows directly from Theorem \ref{liplm}.
\end{proof}

\begin{proof}[\textbf{Proof of Theorem \ref{main-thm-3}}]
By translation and dilation we may assume $r=1$ and $x_0=0$.
If $\va\ge (1/2)$, the H\"older norm of $A^\va=A(x/\va) $ is uniformly bounded.
The Lipschitz  estimate \eqref{T-lip}  follows directly from the $C^{1, \alpha}$  estimate in Theorem \ref{loc-thm}.
Consider the case  $0<\va<(1/2)$.
Let $u_\va\in H^2(B_1; \R^d)$ be a weak solution of $\mathcal{L}_\va (u_\va)=F$ in $B_1=B(0, 1)$, where $F\in L^p(B_1; \R^d)$ for some $p>d$.
Let $v(x)=\va u_\va (\va x)$. Then
$$
(\kappa \va^{-1})^2 \Delta^2 v -\text{\rm div} (A\nabla v) = F_\va,
$$
where $F_\va (x) =\va F(\va x)$.
By Theorem \ref{loc-thm}, 
$$
\aligned
|\nabla u_\va (0)|
& =|\nabla v(0)|\le
C \left\{ \left(\fint_{B_1} |\nabla v|^2 \right)^{1/2}
+ \left(\fint_{B_1} |F_\va|^p \right)^{1/p} \right\}\\
&=C \left\{ \left(\fint_{B_\va} |\nabla u_\va |^2 \right)^{1/2}
+  \va \left(\fint_{B_\va } |F|^p \right)^{1/p} \right\}\\
&\le  C \left\{ \left(\fint_{B_1} |\nabla u_\va |^2 \right)^{1/2}
+ \left(\fint_{B_1} |F|^p \right)^{1/p} \right\},
\endaligned
$$
where we have used  (\ref{L-L-0}) with $R=1$ for the last inequality.
\end{proof}

\bibliographystyle{amsplain}
\bibliography{Niu_Shen_2020.bbl}

\vspace{0.5cm}

\noindent Weisheng Niu \\
School of Mathematical Science, Anhui University, 
Hefei, 230601, CHINA.  \\
E-mail:niuwsh@ahu.edu.cn\\

\noindent Zhongwei Shen\\
Department of Mathematics, University of Kentucky,
Lexington, Kentucky 40506, USA.\\
E-mail: zshen2@uky.edu\\


\noindent\today

\end{document}